\newtheorem{theorem}{Theorem}[section]
\newtheorem{lemma}{Lemma}[section]
\newtheorem{definition}{Definition}[section]
\newtheorem{proposition}{Proposition}[section]
\newtheorem{corollary}{Corollary}[section]
\newtheorem{claim}{Claim}
\newtheorem*{mainthm}{Main Theorem}
\newtheorem*{thmA}{Theorem A}
\newtheorem*{thmB'}{Theorem B'}
\newtheorem*{thmB''}{Theorem B''}
\numberwithin{equation}{section}
\newcommand{\R}{\mathbb{R}}
\newcommand{\Z}{\mathbb{Z}}
\newcommand{\eps}{\varepsilon}
\title{A Dichotomy for the dimension of SRB measure}
\author{Haojie Ren}
\date{\today}
\address{School of Mathematical Sciences, Fudan University, No 220 Handan Road, Shanghai, China 200433}
\email{20110180012@fudan.edu.cn}
\begin{document}
\setlength{\parindent}{1em}

\maketitle

\begin{abstract}
We study dynamical systems generated by skew products:
$$T: [0,1)\times\mathbb{R}\to [0,1)\times\mathbb{R}
\quad\quad T(x,y)=(bx\mod1,\gamma y+\phi(x))$$
where  integer $b\ge2$, $0<\gamma<1$ and $\phi$ is a real analytic $\mathbb{Z}$-periodic function.  We prove the following  dichotomy for the SRB measure $\omega$ for $T$: Either the support of $\omega$ is a graph of real analytic function, or the  dimension of $\omega$ is  equal to $\min\{2,1+\frac{\log b}{\log1/\gamma}\}$. Furthermore, given $b$ and $\phi$, the former alternative only  happens for finitely many $\gamma$ unless $\phi$ is constant.
\end{abstract}

\section{introduction}
In this paper, we consider dynamical systems generated by skew products:
\begin{equation}\label{Map}
T: [0,1)\times\mathbb{R}\to [0,1)\times\mathbb{R}
\quad\quad T(x,y)=(bx\mod1,\gamma y+\phi(x))
\end{equation}
where $b\ge2$ is an integer, $0<\gamma<1$ is a real number and $\phi$ is a non-constant $\mathbb{Z}$-periodic Lipschitz  function.
There exists an ergodic probability measure $\omega$ on $[0,1)\times\mathbb{R}$ such that almost every point $z\in [0,1)\times\mathbb{R}$ is generic, that is,
\begin{equation}\nonumber
\lim_{n\to\infty}\frac1n\sum_{k=0}^{n-1}\delta_{T^k(z)}=\omega\quad\text{weakly}.
\end{equation}
So $\omega$ is the unique SRB measure for $T$. Let $F^{\phi}_{b,\gamma}$ be the support of this measure, which is called the solenoidal attractor for $T$ (See \cite[Section 2]{tsujii2001fat}).

 A probability measure  $\mu$ in a metric space $X$ is called {\em exact-dimensional} if there exists a constant $\beta\ge 0$ such that for $\mu-\text{a.e.}$ $x$,
 \begin{equation}
 \lim_{r\to0}\frac{\log\mu\big(\mathbf{B}(x,r)\big) } {\log r}=\beta.
 \end{equation} 
 In this situation, we write $\text{dim}(\mu)=\beta$ and call it the dimension of $\mu$.
  For any set $K\subset\mathbb{R}^2$, let $\text{dim}_H(K)$ be the Hausdorff dimension of $K$. In the present paper, we study the dimension of the SRB measure $\omega$ and the Hausdorff dimension of the attractor $F^{\phi}_{b,\gamma}$. The main result of this paper is following.
 
 \begin{mainthm} Let $b\ge 2$ be an integer, $\gamma\in (0,1)$ and let $\phi$ be a $\Z$-periodic real analytic function. Then $\omega$ is exact dimensional and exactly
 	one of the following holds:
 	\begin{enumerate}
 		\item [(i)] $F^{\phi}_{b,\gamma}$ is a graph of a real analytic function;
 		\item [(ii)] $dim_H(F^{\phi}_{b,\gamma})=dim (\omega)=\min\{2,1+\frac{\log b}{\log1/\gamma}\}.$
 	\end{enumerate}
 	Moreover, given $b$ and  non-constant $\phi$, the first alternative only holds for finitely many $\gamma\in (0,1)$.
 \end{mainthm}
\medskip
{\bf Historical remarks.} In the work \cite{Alexander1984fat},  Alexander and Yorke considered a class maps called generalized baker's transformation:
\begin{equation}\nonumber
B: [-1,1]\times[-1,1]\circlearrowleft
\quad\quad B(x,y)=\begin{cases}
(2x-1,\gamma y+(1-\gamma))&\, x\ge0\\
(2x+1,\gamma y-(1-\gamma))&\, x<0.
\end{cases}
\end{equation}
Alexander and Yorke studied the case $\frac12<\gamma\le1$ where $T$ is locally area expanding. They showed that the map $B$ admits an absolutely continuous ergodic measure(ACEM) if and only if the number $\gamma$ satisfies a  condition: absolute continuity of the corresponding infinitely convolved Bernoulli measure. Erd\"os \cite{E} proved that $B$ admits no ACEM if $1/\gamma$ is Pisot number. On the other hand, $B$ admits an ACEM for Lebesgue almost every $\gamma\in (1/2,1]$ according to the results of Solomyak \cite{Solo,PS}. Later Shmerkin \cite{shmerkin2014} showed that the Hausdorff dimension of the exceptional set is zero.
Recently Varj\'u \cite{Varju2} show $B$ admits an ACEM  for a class of algebraic parameters.

To study the  class of dynamical systems that stably admits an ACEM with a negative Lyapunov exponents. Tsujii \cite{tsujii2001fat} introduced the class of dynamical systems generated by maps $T$, which is a generalization of the generalized baker's transformations $B$ from the view of smoothness(See (\ref{Map}) for the definition of $T$). 

In the case $b\gamma<1$ where $T$ contracts area, the SRB measure $\omega$ is totally singular with respect to the Lebesgue measure, thus the natural questions in this situation are what are the dimension of $\omega$ and the Hausdorff dimension of $F^{\phi}_{b,\gamma}$. Our paper gives a complete answer 
when $\phi$ is a real analytic $\mathbb{Z}$-periodic function.
For the case $b\gamma=1$ where $T$ preserves area, 
 our paper shows $\text{dim}(\omega)=2$ unless $F^{\phi}_{b,\gamma}$ is a graph of a real analytic function.
 For the case $b\gamma>1$, the natural questions are when $\omega$ is absolutely continuous with respect to the Lebesgue measure and what are the geometric properties of $F^{\phi}_{b,\gamma}$. In \cite{tsujii2001fat}, Tsujii proved that  the SRB measure is absolutely continuous respect to the Lebesgue measure for $C^2$ generic $\phi$. Later Avila, Gou\"ezel and Tsujii \cite{Avila} studied the smoothness of the SRB measure for $C^r$ generic $\phi$ ( for some integer $r\ge3$).  For the geometric properties of $F^{\phi}_{b,\gamma}$, Bam\'on et al. \cite{Bamon} proved the result: for any non-constant Lipschitz function $\phi(x)$ and integer $b\ge2$, there exists $\gamma_3\in(0,1)$ such that the  set $F^{\phi}_{b,\gamma}$ has non-empty interior for all $\gamma\in(\gamma_3,1)$.
  Our results show  weaker answers for these questions: 
  we figure out  when  $\text{dim}(\omega)\,\text{and}\,\text{dim}_{H}(F^{\phi}_{b,\gamma})$ are equal to 2  for the situation that $\phi$ is a real analytic periodic function. Note that, if $\omega$ is  absolutely continuous with respect to the Lebesgue measure, then $\text{dim}(\omega)=2.$

Another important reason to study the SRB measure $\omega$ is to study the Hausdorff dimension of the graph of Weierstrass-type functions
\begin{equation}\label{eqn:Wtype}\nonumber
W(x)=W^{\psi}_{\lambda,b}(x)=\sum\limits_{n=0}^{\infty}{{\lambda}^n\psi(b^nx)},\,\, x \in \mathbb{R}
\end{equation}
where integer $b > 1$, $1/b< \lambda < 1$ and $\psi(x):\mathbb{R} \to \mathbb{R}$ is a non-constant $\mathbb{Z}$-periodic Lipschitz function. 
The most famous example, with $\psi(x)=\cos(2\pi x)$, was introduced by Weierstrass as a continuous nowhere differentiable function, see \cite{hardy1916weierstrass}. 
Denote $\varGamma W^{\psi}_{\lambda,b}=\big\{\,\big(x,W^{\psi}_{\lambda,b}(x)\,\big)\,\big\}_{x\in[0,1]}$.
In fact Ledrappier \cite{ledrappier1992dimension} show that $\text{dim}_H(\varGamma W^{\psi}_{\lambda,b})$  is equal to $2+\frac{\log\lambda}{\log b}$ if $\text{dim}(\omega)=2$ where $\gamma=\frac1{b\gamma}$ and $\psi'(x)=\phi(x).$
Following this way, \cite{baranski2014dimension,shen2018hausdorff} studied the values of $\text{dim}_H(\varGamma W^{\psi}_{\lambda,b})$ when $\psi(x)=\cos(2\pi x)$.

 Let us also mention that Zhang \cite{Zhang2018} studied the smoothness of the SRB measures for the map  $T: [0,1)\times [0,1)\to  [0,1)\times [0,1)$ when $\gamma=1$. See \cite{Rams} for more general fat Baker maps.

\medskip{\bf Organization.} We shall introduce Theorem A and prove Main Theorem in Sect. \ref{ProveM}. The rest of the paper is devoted to prove Theorem A by Hochman's criterion on entropy increase (See Theorem \ref{thm:hochmanentgrow}). Thus we shall first recall Ledrappier-Young theorem and some basic properties of entropy of measures in Sect. \ref{pre}, then we shall analyze the  separation properties in Sect. \ref{sep}, entropy porosity in Sect. \ref{entropyporous} and transversality in Sect. \ref{tra}. In Sect. \ref{sec:partitionX} we will construct a nested sequence of partition of $\bigcup_{n=1}^{\infty} \{0,\ldots,b-1\} ^n$. Finally we  will assume the contrary and  use Hochman's criterion to obtain a contradiction in Sect. \ref{sec:pfThmA}.

\section{Main findings and proof of Main Theorem }\label{ProveM}
In this section, we will first introduce Theorem A, then we will give some explanations for the idea of the proof of Theorem A. Finally we will use Theorem A to finish the proof of the Main Theorem.

\subsection{Theorem A}
Let $\mathbb{Z}_+$ denote the set of positive integers. Let $\mathbb{N}$ be the set of nonnegative integers.
Let $\varLambda=\{ 0,1,...,b-1 \}$,
$\varLambda^{\#}=\bigcup_{n=1}^{\infty} \varLambda ^n$,
$\Sigma=\varLambda^{\mathbb{Z}_+}$. For any word $\textbf{j}=j_1j_2  \cdot \cdot \cdot j_p \in \varLambda^{p} $ of length $1\le p\le\infty$ and $x\in [0,1]$ define
\begin{equation}\label{S}
S(x,\textbf{j})=S_{\gamma, b}^\phi(x,\textbf{j})=\sum\limits_{n=1}^{p}{\gamma^{n-1}\phi\left(\frac x{b^n}+\frac{j_1}{b^n}+\frac{j_2}{b^{n-1}} + \cdot \cdot \cdot + \frac{j_n}b\right)},
\end{equation}
and the map
\begin{equation}
G : [0,1)\times\Sigma\to [0,1)\times\mathbb{R}\quad\quad G(x,\textbf{j})=(x, S(x,\textbf{j})).
\end{equation}
let $\nu$ denote even distributed probability  measure on $\varLambda$ and let $\nu^{\mathbb{Z}_+}$ be product measure on $\varSigma.$ Thus $G( \mathcal{m} \times\nu^{\mathbb{Z}_+})$ is the SRB measure $\omega$ and $$F^{\phi}_{b,\gamma}=\bigg\{(x, S(x,\textbf{j})):x\in [0,1),\,\textbf{j}\in\Sigma\bigg\}$$
where $\mathcal{m}$ is Lebesgue measure on $[0,1)$ (see \cite[Section 2]{tsujii2001fat} for details). 

In the work \cite{tsujii2001fat, Avila, baranski2014dimension, shen2018hausdorff}, $\omega$ has absolute continuity under  suitable transversality conditions for all $\gamma\in(1/b,1)$. In the following, for $\gamma\in(0,1)$ we shall recall a gentle transversality  the condition (H) and the degenerate situation the condition (H$^*$) from the work \cite{ren2021dichotomy}.
\begin{definition}
	Given an integer $b\ge 2$ and $\gamma\in (0,1)$,	we say that a $\mathbb{Z}$-periodic $C^1$ function $\phi(x)$ satisfies
	\begin{itemize}
		\item the condition (H) if 
		$$S(x,\textbf{j})-S(x,\textbf{i}) \nequiv 0, \quad \forall \, \textbf{j} \neq \textbf{i} \in \Sigma.$$
		\item
		the condition (H$^*$) if
		$$S(x,\textbf{j})-S(x,\textbf{i}) \equiv 0, \quad \forall \, \textbf{j},\,\, \textbf{i} \in \Sigma.$$
	\end{itemize}
\end{definition}
The following is our main result in this work.
\begin{thmA}
	If a real analytic $\mathbb{Z}$-periodic function $\phi(x)$ satisfies the condition (H) for an integer $b\ge 2$ and $\gamma\in (0,1)$, then
	$$dim (\omega)=\min\{1+\frac{\log b}{\log1/\gamma},2\}.$$
\end{thmA}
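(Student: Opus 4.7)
The plan is to prove Theorem A by contradiction, applying Hochman's entropy growth criterion (Theorem \ref{thm:hochmanentgrow}) to the family of conditional measures of $\omega$ along stable fibers. First I would reduce the dimension question to an entropy question via Ledrappier--Young theory. The map $T$ has a contracting (stable) vertical direction with Lyapunov exponent $\log\gamma$ and an expanding horizontal direction with exponent $\log b$; since $\omega$ projects to Lebesgue measure on $[0,1)$, the unstable partial dimension equals $1$ and Ledrappier--Young yields
$$\dim(\omega)=1+\frac{h_s}{\log(1/\gamma)},$$
where $h_s$ is the stable partial entropy of $\omega$, which satisfies $h_s\le\min\{\log b,\log(1/\gamma)\}$. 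This immediately gives the upper bound $\dim(\omega)\le\min\{1+\log b/\log(1/\gamma),2\}$, so the entire task is to rule out strict inequality, i.e.\ to show $h_s=\min\{\log b,\log(1/\gamma)\}$.

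For fixed $x\in[0,1)$ the conditional measure $\mu_x$ on $\{x\}\times\R$ is (up to the $y$-offset) the pushforward $S(x,\cdot)_{*}\nu^{\Z_+}$, and the identity
$$S(x,j\mathbf{j}')=\phi\bigl((x+j)/b\bigr)+\gamma S\bigl((x+j)/b,\mathbf{j}'\bigr)$$
exhibits $\mu_x$ as an average of $b$ translated, $\gamma$-rescaled copies of the measures $\mu_{(x+j)/b}$. Iterating $n$ times writes $\mu_x$ as an average of $b^n$ translates of $\gamma^n$-rescaled fiber measures. This self-affine-like representation is the structure Hochman's criterion is designed for: if the $b^n$ translated components were exponentially separated (at scale comparable to $\gamma^{(1+\delta)n}$, modulo a thin set of $x$), then entropy would have to grow strictly at each scale, forcing $h_s$ to be maximal.

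To feed the recursion into Hochman's theorem I would verify its three standing hypotheses. For separation (Section \ref{sep}), condition (H) says $S(x,\mathbf{j})-S(x,\mathbf{i})\nequiv 0$ for distinct itineraries, and real analyticity of $\phi$ upgrades this non-vanishing to quantitative lower bounds off a sparse set of $x$, giving the base-level separation. For entropy porosity (Section \ref{entropyporous}), the working hypothesis $h_s<\min\{\log b,\log(1/\gamma)\}$ is shown to force $\mu_x$ to look sufficiently uniform at most scales, which is the regularity input required by Hochman's local entropy averaging. For transversality (Section \ref{tra}), analyticity together with (H) yields quantitative transversality of the maps $x\mapsto S(x,\mathbf{j})$ across $\mathbf{j}\in\varLambda^n$ outside a small exceptional $x$-set. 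In Section \ref{sec:partitionX} one builds a nested partition of $\varLambda^{\#}$ that bundles together words whose $S(\cdot,\mathbf{j})$ values coalesce at coarse scales, so that across clusters transversality holds at a quantifiable exponential rate. Finally, Section \ref{sec:pfThmA} assumes $h_s<\min\{\log b,\log(1/\gamma)\}$, combines porosity with the cluster/transversality structure to verify Hochman's exponential separation input, and extracts a contradiction from entropy growth iterated across scales.

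The main obstacle will be the interplay between the transversality of Section \ref{tra} and the clustering of Section \ref{sec:partitionX}. The number of pairs $(\mathbf{i},\mathbf{j})\in\varLambda^n\times\varLambda^n$ grows like $b^{2n}$, while analyticity alone only guarantees finite-order vanishing of any single difference $S(x,\mathbf{j})-S(x,\mathbf{i})$; so to preserve uniformly good separation after averaging, one must identify and quotient out the pairs whose analytic contact is highest, which is exactly the role of the nested cluster partition. Controlling the cluster sizes and verifying that between-cluster separation survives at each scale, uniformly in $x$ outside an exceptional set, is the delicate combinatorial--analytic step on which the whole argument hinges, and it is precisely here that the real analyticity of $\phi$ (rather than mere smoothness) is indispensable.
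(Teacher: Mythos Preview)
Your proposal is correct and follows essentially the same architecture as the paper: the Ledrappier--Young reduction to $\alpha=\dim(m_x)$, exponential separation from analyticity and (H), entropy porosity of $m_x$, transversality, the nested partition of $\varLambda^{\#}$, and the final contradiction via Hochman's criterion are exactly the pieces assembled in Sections~\ref{sep}--\ref{sec:pfThmA}. One small clarification: entropy porosity (Theorem~\ref{thm:entporous}) is proved under condition~(H) alone, not under the contradiction hypothesis; the assumption $\alpha<1$ enters only later when matching the porosity level $\alpha$ to the $(1-\varepsilon,\varepsilon/2,m)$ input of Theorem~\ref{thm:hochmanentgrow}, and the assumption $\alpha<\log b/\log(1/\gamma)$ is what produces, via Lemmas~\ref{lem:thetanL0}--\ref{lem:thetanLcn} and Corollary~\ref{p}, a positive-density set of scales where the discrete component carries nontrivial entropy (condition~(3) of Hochman's criterion).
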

The idea of the proof of Theorem A is from the Hochman \cite{hochman2014self}'s breakthrough observation for entropy growth of measures under convolution. More specifically, a self-similar measure  is the convolution of  a  measure  with itself scaled down by some positive value, which allows Hochman to apply his criterion on entropy increase to get the dimension of self-similar measures in $\mathbb{R}$ under the exponential separation condition. Later the methods are generalized to study the self-affine measures on the plane \cite{barany2019hausdorff, Hochman2022}. Recently, following  \cite{barany2019hausdorff}, Shen and the author \cite{ren2021dichotomy} studied a class of measures induced by nonlinear IFS.
Our paper will take the similar strategy in \cite{barany2019hausdorff, ren2021dichotomy}. 

Indeed, for any  $x\in [0,1]$ define the map
\begin{equation}\label{S_x}
S_x: \Sigma\to\mathbb{R}\quad\quad S_x(\,\textbf{j}\,)=S(x,\textbf{j})
\end{equation}
and measure $m_x:=S_x(\nu^{\mathbb{Z}_+}).$ 
Note that $\omega=\int_{[0,1) }\big(\delta_x\times m_x\big)\,dx.$
By Ledrappier-Young theory \cite{ledrappier1985metric},
$m_x$ is exact dimensional and there exists a constant $\alpha\in[0,1]$ such that $\text{dim}(m_x)=\alpha$ for Lebesgue almost every $x\in [0,1]$, thus we only need to show $\alpha=\min\{1,\frac{\log b}{\log1/\gamma}\}$, see Sect \ref{ledrappier-young}. For any $n\in\mathbb{Z}_+$, $\textbf{j}=j_1j_2\ldots j_n\in\varLambda^n$ and $x\in[0,1]$, let
\begin{equation}\label{eq:text}
\textbf{j}(x)=\frac{x+j_1+\ldots+j_nb^{n-1}}{b^n}
\end{equation}
and
\begin{equation}\label{T^nm_x}
T^n(m_{\textbf{j}\,(x)})=f_{x,\,\textbf{j}}(m_{\textbf{j}(x)})
\end{equation}
where $f_{x,\textbf{j}}(y)=\gamma^ny+S(x,\,\textbf{j}),\,\forall y\in\mathbb{R}$.
By \cite[(9)]{tsujii2001fat} the following holds 
\begin{equation}\label{FundementalFormular}
m_x=\frac1{b^n}\sum_{\textbf{j}\in\varLambda^n} T^n(m_{\textbf{j}(x)})
\end{equation}
for $n\ge1$ and $x\in [0,1]$.
 Since most of $m_{\textbf{j}(x)}$ are similar in the sense of entropy when $n$ is large enough, see Sect \ref{entropyporous}.
Thus $m_x$ is approximate with
$$\bigg(\frac1{b^n}\sum_{\textbf{j}\in\varLambda^n}\delta_{S(x,\,\textbf{j})}\bigg)*(\gamma^nm_x).$$
Assuming the contrary, we shall apply the Hochman's criterion   on entropy growth \cite{hochman2014self} to obtain a contradiction.

In  \cite{hochman2014self,baranski2014dimension} they studied the linear systems and relative self-similar(affine) measures. But since $T$ is a nonlinear map, we shall 
use some ideas in \cite{ren2021dichotomy} to construct the convolution of measures by transversality.
In \cite{ren2021dichotomy} the authors considered the functions
$$S_{\textbf{j}}:[0,1]\to\mathbb{R}\quad S_{\textbf{j}}(x)=S(x,\textbf{j})$$
for any $\textbf{j}\in\Sigma.$ But in our paper we shall consider the functions $S_x$  which  is defined in symbolic space $\Sigma$  for any $x\in [0,1]$ (See (\ref{S_x}) for the definition of $S_x$). Thus we shall take a different way to analyze the separation, transversality and  construct a different partition for  symbolic space, which are important in this strategy to prove Theorem A. 

\subsection{Proof of Main Theorem}
To prove our main result, we shall need the dichotomy between condition (H) and condition (H$^*$). The following is a immediate consequence of \cite[Theorem 2.1]{gao2022}.
For the case $b\gamma>1$, see also \cite[Theorem A]{ren2021dichotomy}.
\begin{theorem}\label{thm:dichotomy}
	Fix $b\ge 2$ integer and $\gamma\in (0,1)$. Assume that $\phi$ is analytic $\Z$-periodic function. Then exactly one of the following holds:
	\begin{enumerate}
		\item [(i)]  $\phi$ satisfies the condition (H$^*$);
		\item [(ii)]  $\phi$ satisfies the condition (H).
	\end{enumerate}
\end{theorem}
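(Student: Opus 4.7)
\emph{Proof plan.} The theorem is stated as an immediate consequence of \cite[Theorem~2.1]{gao2022}; I sketch here how the dichotomy could be obtained from scratch, following the same philosophy. The two alternatives are clearly mutually exclusive: since $b\ge 2$ gives $|\Sigma|>1$, the universal vanishing required by (H$^*$) contradicts the universal non-vanishing required by (H). Only the disjunction needs justification, so it suffices to show that the failure of (H) forces (H$^*$).

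\emph{Coboundary reformulation of (H$^*$).} Let $\sigma$ denote the left-shift on $\Sigma$. The self-similarity identity
\begin{equation*}
S(x,k\textbf{l})=\phi\!\left(\tfrac{x+k}{b}\right)+\gamma\, S\!\left(\tfrac{x+k}{b},\textbf{l}\right),\qquad k\in\varLambda,\ \textbf{l}\in\Sigma,
\end{equation*}
can be used to establish that (H$^*$) holds if and only if there exists an analytic $1/b$-periodic function $\Phi$ with $\phi(y)=\Phi(y)-\gamma\Phi(y/b)$. Substituting such a $\Phi$ into the series for $S$ and telescoping via $\Phi(z+k/b)=\Phi(z)$ yields $S(x,\textbf{j})=\Phi(x/b)$ for every $\textbf{j}$, which is (H$^*$); conversely, if $S(x,\textbf{j})\equiv\Psi(x)$ is independent of $\textbf{j}$, then the recursion forces $y\mapsto\phi(y)+\gamma\Psi(y)$ to be $1/b$-periodic, and setting $\Phi:=\phi+\gamma\Psi$ closes the loop.

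\emph{Reduction and iteration.} Suppose (H) fails and choose $\textbf{i}\ne\textbf{j}$ with $S(\cdot,\textbf{i})\equiv S(\cdot,\textbf{j})$. If $\textbf{i},\textbf{j}$ agree on their first $n-1$ symbols but differ at position $n$, then peeling off the common prefix yields $S(y,\sigma^{n-1}\textbf{i})=S(y,\sigma^{n-1}\textbf{j})$ for $y$ in an interval; since every $S(\cdot,\textbf{k})$ extends to an analytic function on $\R$ (by the analytic $\Z$-periodicity of $\phi$ together with the geometric convergence of the defining series), the identity propagates globally and we may assume $i_1\ne j_1$. One application of the recursion to the original identity then gives
\begin{equation*}
\phi\!\left(\tfrac{x+i_1}{b}\right)-\phi\!\left(\tfrac{x+j_1}{b}\right)=\gamma\bigl[S\!\left(\tfrac{x+j_1}{b},\sigma\textbf{j}\right)-S\!\left(\tfrac{x+i_1}{b},\sigma\textbf{i}\right)\bigr],
\end{equation*}
and iterating this expansion on the right-hand side produces a countable family of analytic identities among translates and $b$-adic dilates of $\phi$, with the $n$th-level contribution of size $\gamma^n$. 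Comparing Taylor coefficients at a well-chosen base point, the plan is to show that the only analytic $\phi$ compatible with this family is a coboundary of the form $\phi=\Phi-\gamma\Phi(\cdot/b)$, which by the previous paragraph delivers (H$^*$).

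\emph{Main obstacle.} The delicate part is this rigidity step: verifying that the chain of identities produced by iterating the recursion really closes into the single coboundary relation rather than permitting some weaker partial cancellation. This is precisely what \cite[Theorem~2.1]{gao2022} provides, and it is the place where analyticity of $\phi$ (rather than mere smoothness) is essential.
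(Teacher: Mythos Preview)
Your coboundary reformulation of (H$^*$) is correct and illuminating, and the reduction to $i_1\ne j_1$ via analyticity is fine. But what you have is a plan, not a proof: the ``rigidity step''---deducing from a single identity $S(\cdot,\textbf{i})\equiv S(\cdot,\textbf{j})$ that $\phi$ must be a coboundary $\Phi-\gamma\Phi(\cdot/b)$---is precisely the content of the theorem, and you do not carry it out. Saying that iterating the recursion ``produces a countable family of analytic identities'' and that ``comparing Taylor coefficients'' should force the coboundary form is not an argument; it is a restatement of the problem.

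Moreover, your final appeal to \cite[Theorem~2.1]{gao2022} for this step is not obviously legitimate in the form you state it. That result, as the paper uses it, concerns sums of the shape $\sum_{n\ge1}(b\gamma)^n\,\hat f\circ\tau_{\textbf{w},n}$, whereas your unintegrated $S(x,\textbf{j})$ has weights $\gamma^{n-1}$. The paper's actual proof handles this mismatch by a short but essential device you omit: it replaces $\phi$ by $\phi^*=\phi-\int_0^1\phi$, sets $\hat f(x)=\int_0^x\phi^*$, and observes that
\[
\int_0^x S^{\phi^*}_{b,\gamma}(s,\textbf{w})\,ds=\frac{1}{\gamma}\sum_{n\ge1}(b\gamma)^n\bigl(\hat f\circ\tau_{\textbf{w},n}(x)-\hat f\circ\tau_{\textbf{w},n}(0)\bigr),
\]
which is exactly the form to which \cite[Theorem~2.1]{gao2022} applies. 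One then checks the elementary equivalence $\int_0^x S^{\phi^*}(s,\textbf{i})\,ds\equiv\int_0^x S^{\phi^*}(s,\textbf{j})\,ds\iff S^{\phi}(x,\textbf{i})\equiv S^{\phi}(x,\textbf{j})$ to transfer the dichotomy back. This integration trick is the whole proof; your coboundary discussion, while correct, neither replaces it nor feeds into it.
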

\begin{proof}
	Let $\phi^*:\mathbb{R}\to\mathbb{R}$ be a function such that 
	$\phi^*(x)=\phi(x)-\int_{[0,1]}\phi(s)\,ds$ for $x\in\mathbb{R},$
	then 
	$\hat{f}(x):=\int_0^x\phi^*(s)\,ds,\,\forall x\in\mathbb{R}$ is an real analytic $\mathbb{Z}$-periodic function. For any $\textbf{w}\in\Sigma$, we have
	\begin{equation}\nonumber
	\int^x_0S^{\phi^*}_{b,\,\gamma}(s,\textbf{w})\,ds=\frac1\gamma\sum_{n=1}^{\infty}(b\gamma)^n\bigg(\hat{f}\circ\tau_{\textbf{w},\,n}(x)-\hat{f}\circ\tau_{\textbf{w},\,n}(0)\bigg)
	\end{equation}
	where $\tau_{\textbf{w},\,n}(x)=\frac{x+w_1+\ldots+w_nb^{n-1}}{b^n}.$
	Therefore by \cite[Theorem 2.1]{gao2022}, the following are equivalent:
	\begin{enumerate}
		\item [(i)]  there exists $\textbf{i}\neq\textbf{j}\in\varSigma$ with
		$ \int^x_0S^{\phi^*}_{b,\,\gamma}(s,\textbf{i})\,ds \equiv \int^x_0S^{\phi^*}_{b,\,\gamma}(s,\textbf{j})\,ds$;
		\item [(ii)] $\int^x_0S^{\phi^*}_{b,\,\gamma}(s,\textbf{i})\,ds \equiv \int^x_0S^{\phi^*}_{b,\,\gamma}(s,\textbf{0})\,ds$ for any $\textbf{j}\in\varSigma$.
	\end{enumerate}
	where $\textbf{0}=00\ldots0\ldots\in\Sigma.$ 
	Thus, since for every $\textbf{i},\,\textbf{j}\in\Sigma$,  the following are equivalent:
	\begin{enumerate}
		\item [(i)] $ \int^x_0S^{\phi^*}_{b,\,\gamma}(s,\textbf{i})\,ds \equiv \int^x_0S^{\phi^*}_{b,\,\gamma}(s,\textbf{j})\,ds$;
		\item[(ii)]$ S^{\phi}_{b,\,\gamma}(x,\textbf{i}) \equiv S^{\phi}_{b,\,\gamma}(x,\textbf{j}),$
	\end{enumerate}
	the Theorem \ref{thm:dichotomy} holds.
\end{proof}
\begin{proof}[Proof of the Main Theorem] 
	There exists $\gamma_1\in(0,1)$ such that
	$\omega$ is absolute continuous with respect with Lebsgue measure for all $\gamma\in(\gamma_1,1)$ (See the proof of Theorem 1.1 in \cite{shen2018hausdorff} for this conclusion). Thus we only need to consider the case $\gamma\in(0,\gamma_1)$. The proof for the dimension of SRB measure $\omega$ is  similar to the proof of \cite[Main Theorem]{ren2021dichotomy} by Theorem A and Theorem \ref{thm:dichotomy}. For non-degenerate $\gamma$, by the mass distribution principle, we have $dim_H(F^{\phi}_{b,\gamma})\ge\min\{2,1+\frac{\log b}{\log1/\gamma}\}$ since $F^{\phi}_{b,\gamma}$  is the support of $\omega$. Also we have $dim_B(F^{\phi}_{b,\gamma})\le\min\{2,1+\frac{\log b}{\log1/\gamma}\}$ by the definition of $F^{\phi}_{b,\gamma}$. Thus our theorem holds.
\end{proof}

\section{Preliminaries}\label{pre}
In this section we shall first recall some basic facts from Ledrappier-Young theory. Later we will recall the definition and basic properties of the entropy of measures, which is a basic tool in our paper.
\subsection{Ledrappier's Theorem}\label{ledrappier-young}
The following Theorem \ref{TheoremB} can be proved by similar methods in \cite[Proposition 2]{ledrappier1992dimension} (See \cite{ledrappier1985metric, Shu2010} for more general results on compact manifolds). For the completeness, we offer a proof in Appendix.
\begin{theorem}\label{TheoremB}
	If $\phi:\mathbb{R}\to\mathbb{R}$ is a $\mathbb{Z}$-periodic Lipschitz function, then
		\begin{enumerate}
		\item[(1)] $\omega$ is exact dimensional;
		\item[(2)] there is a constant $\alpha\in [0,1]$ such that for Lebesgue a.e. $x\in [0,1]$, $m_x$ is exact dimensional and $\dim(m_x)=\alpha$.
		\item[(3)]
		\begin{equation}
		\label{LedrapperYoungF}
		\dim(\omega)=1+\alpha.
		\end{equation}
	\end{enumerate}
\end{theorem}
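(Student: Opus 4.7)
My plan is to adapt Ledrappier's stable-dimension argument to the present skew-product setting, using the disintegration $\omega=\int_0^1 \delta_x\times m_x\,dx$ from Section~\ref{ProveM} and the fundamental scaling identity (\ref{FundementalFormular}). The proof has two main steps: (A) show that $m_x$ is exact-dimensional for Lebesgue-a.e.\ $x$ with an a.e.\ constant dimension $\alpha\in[0,1]$; (B) deduce that $\omega$ is exact-dimensional with $\dim(\omega)=1+\alpha$ by a Fubini-type argument on the disintegration.

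For part (A), I would work with the coding $S_x:\Sigma\to\mathbb{R}$ so that $m_x=(S_x)_*\nu^{\mathbb{Z}_+}$. Lipschitz continuity of $\phi$ yields $|S_x(\textbf{j})-S_x(\textbf{i})|\le C\gamma^n$ whenever $\textbf{j}$ and $\textbf{i}$ agree on their first $n$ symbols, so each length-$n$ cylinder in $\Sigma$ is mapped by $S_x$ into an interval of diameter $O(\gamma^n)$. Applying Shannon--McMillan--Breiman on the Bernoulli system $(\Sigma,\sigma,\nu^{\mathbb{Z}_+})$ to the length-one cylinder partition gives $-n^{-1}\log\nu^{\mathbb{Z}_+}([j_1\ldots j_n])\to\log b$ almost surely. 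Combining this equipartition with an upper bound on the overlap multiplicity between the intervals $S_x([j_1\ldots j_n])$ at scale $\gamma^n$, controlled dynamically using $T$-invariance of $\omega$ together with (\ref{FundementalFormular}), yields the $\omega$-a.e.\ limit $\lim_{r\to 0}\log m_x(B(y,r))/\log r=\alpha$ for a single constant $\alpha\in[0,1]$. Constancy of $\alpha$ comes from ergodicity of the Bernoulli shift on $\Sigma$ and of $(T,\omega)$.

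For part (B), Fubini on the disintegration gives $\omega(B((x,y),r))=\int_{x-r}^{x+r} m_{x'}(B(y,r))\,dx'$. Uniform Lipschitz dependence of $S(x,\textbf{j})$ on $x$ (immediate from Lipschitz continuity of $\phi$, uniformly in $\textbf{j}$) furnishes the equicontinuity bound $m_{x'}(B(y,r))\le m_x(B(y,r+L|x-x'|))$ and its reverse; inserting $|x-x'|\le r$ and applying part (A) at $\omega$-typical $(x,y)$ sandwiches $\omega(B((x,y),r))$ between constants times $r^{1+\alpha}$, giving both exact-dimensionality of $\omega$ and $\dim(\omega)=1+\alpha$.

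The main obstacle is controlling the overlap multiplicity in part (A). Without any transversality hypothesis on $\phi$, different length-$n$ cylinders can map into coinciding or heavily overlapping intervals in $\mathbb{R}$, inflating the pointwise mass of $m_x$; this is precisely what prevents $\alpha$ from being automatically equal to $\min\{1,\log b/|\log\gamma|\}$ and forces it to be defined as a dynamical limit rather than a box-counting quantity. Ledrappier's technique refines the cylinder partition via a combinatorial coincidence relation and applies SMB to the refined sequence, extracting the ball-based pointwise dimension from the partition-based entropy limit. Carrying this out uniformly in $x$, while tracking the Lipschitz distortion contributed by $\phi$, is the technical heart of the appendix proof.
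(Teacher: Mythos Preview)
Your Part~(B) is correct and essentially matches the paper's final step: once $\dim(m_x)=\alpha$ for a.e.\ $x$, the Appendix also uses Egoroff together with the uniform Lipschitz bound $|S(x,\textbf{j})-S(x',\textbf{j})|\le M^\phi|x-x'|$ and the disintegration $\omega=\int \delta_x\times m_x\,dx$ to sandwich $\omega(B(z,r))$ between constant multiples of $r^{1+\alpha\pm\varepsilon}$.

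Part~(A) is where the proposal stays too schematic to count as a proof. You correctly identify overlap multiplicity as the obstruction, but ``SMB on length-one cylinders plus an overlap bound from (\ref{FundementalFormular})'' is not the mechanism that actually works, and it is not clear how you would make it one: SMB gives the $\nu^{\mathbb{Z}_+}$-mass of a symbolic cylinder, not the $m_x$-mass of a metric ball, and nothing in (\ref{FundementalFormular}) by itself bounds how many length-$n$ cylinders fall into a $\gamma^n$-ball. The Appendix follows Ledrappier more literally. It passes to the invertible natural extension $G$ on $\Sigma\times\mathbb{R}\times[0,1)$, disintegrates the lifted measure $\mu$ over the partition $\zeta=\{\Sigma\times\{y\}\times\{x\}\}$ via Rokhlin, and defines the conditional entropy $h=\int -\log \mu_{(\textbf{j},y,x)}([j_1])\,d\mu$. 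The key identity is a telescoping product for $\mu_x\big(\boldsymbol{B}^T_{(\textbf{j},y,x)}(\gamma^n)\big)$, obtained from the one-step relation of Lemma~\ref{lem:A1}, which converts $-\log m_x(B(y,\gamma^n))$ into $n\log b+\sum_{k=0}^{n-1} g_{n-k}\circ G^k$, where $g_k$ compares the mass of a $\gamma^k$-tube with its restriction to the first-coordinate cylinder. One then needs (i) a Besicovitch-type maximal inequality showing $\sup_k g_k\in L^1$ uniformly in $x$ (Lemma~\ref{lem:bound}; this is exactly where the overlap issue is absorbed, and it does not use any transversality), (ii) a.e.\ convergence $g_k\to g$ by differentiation of measures, and (iii) Birkhoff on $(G,\mu)$ plus dominated convergence to get $\tfrac{1}{n}\sum_{k} g_{n-k}\circ G^k\to h$ a.e. This yields $\dim(m_x)=(\log b-h)/\log(1/\gamma)$, with constancy of $\alpha$ coming from ergodicity of $(G,\mu)$. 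Your last paragraph gestures toward this, but the concrete ingredients---the natural extension, the telescoping identity, and the Besicovitch maximal lemma---are absent from the sketch, and without them the ``overlap multiplicity'' step is an assertion rather than an argument.
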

So it suffices to show that $\alpha=\min\{1,\frac{\log b}{\log1/\gamma}\}$ under the condition (H) for proving Theorem A.

\subsection{Entropy of measures}
For a probability space $(\Omega, \mathcal{B}, \mu)$, a {\em countable partition} $\mathcal{Q}$ is a countable collection of pairwise disjoint measurable subsets of $\Omega$ whose union is equal to $\Omega$. Let $\mathcal{Q}(x)$  be the member of $\mathcal{Q}$ that contains $x$. For $\mu(\mathcal{Q}(x))>0$, we call the conditional measure
$$\mu_{\mathcal{Q}(x)}(A)=\frac{\mu(A\cap \mathcal{Q}(x))}{\mu(\mathcal{Q}(x))}$$
a {\em $\mathcal{Q}$-component} of $\mu$.
Define the {\em entropy}
$$H(\mu, \mathcal{Q})=\sum_{Q\in\mathcal{Q}} -\mu(Q) \log_b \mu(Q)$$
where the common convention $0\log 0=0$ is adopted.
For another countable partition $\mathcal{P}$, define the {\em condition entropy} as
$$H(\mu, \mathcal{Q}|\mathcal{P})=\sum_{P\in \mathcal{P},\,\mu(P)>0} \mu(P) H(\mu_P, \mathcal{Q}).$$
When $\mathcal{Q}$ is a {\em refinement} of $\mathcal{P}$, i.e.,
$\mathcal{Q}(x)\subset\mathcal{P}(x)$ for every $x\in \Omega$, we have
$$H(\mu, \mathcal{Q}|\mathcal{P})=H(\mu, \mathcal{Q})-H(\mu, \mathcal{P}).$$

If there exists a sequence of partitions $\mathcal{Q}_i$, $i=0,1,2,\cdots$, such that $\mathcal{Q}_{i+1}$ is a refinement of $\mathcal{Q}_i$, we shall denote $\mu_{x,i}=\mu_{\mathcal{Q}_i(x)}$, and call it a {\em $i$-th component measure of } $\mu$. For a finite set $I$ of integers, if for every $i\in I$, there is a random variable $Y_i$ defined over $(\Omega, \mathcal{B}(\mathcal{Q}_i), \mu)$, where $\mathcal{B}(\mathcal{Q}_i)$ is the sub-$\sigma$-algebra of $\mathcal{B}$ which is generated by $\mathcal{Q}_i$. Then we shall use the following notation
$$\mathbb{P}_{i\in I} (K_i)=\mathbb{P}_{i\in I}^\mu(K_i):=\frac{1}{\# I} \sum_{i\in I} \mu(K_i),$$
where $K_i$ is an event for $Y_i$. If $Y_i$'s are $\mathbb{R}$-valued random variable, we shall also use the notation
$$\mathbb{E}_{i\in I} (Y_i)=\mathbb{E}^\mu_{i\in I}(Y_i):=\frac{1}{\# I} \sum_{i\in I} \mathbb{E}(Y_i).$$
Therefore the following holds
$$H(\mu, \mathcal{Q}_{m+n}|\mathcal{Q}_n)=\mathbb{E}(H(\mu_{x, n}, \mathcal{Q}_{m+n}))=\mathbb{E}_{i=n} (H(\mu_{x,i},\mathcal{Q}_{i+m})).$$
These notations were used extensively in ~\cite{hochman2014self} and \cite{barany2019hausdorff}.

In most of cases, we shall consider the situation  that $\Omega=\mathbb{R}$ and $\mathcal{B}$ the Borel $\sigma$-algebra. Let $\mathcal{L}_n$ be the partition of $\mathbb{R}$ into $b$-adic intervals of level $n$, i.e., the intervals $[j/b^n, (j+1)/b^n)$, $j\in \mathbb{Z}$. Let $\mathscr{P}(\R)$ denote the collection of all Borel probability measures in $\R$. 
If an probability measure $\mu\in \mathscr{P}(\mathbb{R})$ is exact dimensional, its dimension is closely related to the entropy. More specifically we have the
 following fact
\cite[Theorem 4.4]{Young}. See also~\cite[Theorem 1.3]{Fan2002}.
\begin{proposition}\label{prop:Young}
	If $\mu \in \mathscr{P}(\mathbb{R})$ is exact dimensional, then
	$$\dim(\mu)= \lim\limits_{n\to\infty}\frac{1}{n} H(\mu,\mathcal{L}_n).$$
\end{proposition}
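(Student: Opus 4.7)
The plan is to prove the two inequalities
$$\liminf_n \frac{H(\mu, \mathcal{L}_n)}{n} \ge \dim(\mu) \quad \text{and} \quad \limsup_n \frac{H(\mu, \mathcal{L}_n)}{n} \le \dim(\mu)$$
separately, working throughout with the pointwise representation
$$H(\mu, \mathcal{L}_n) = \int_{\mathbb{R}} -\log_b \mu(\mathcal{L}_n(x))\,d\mu(x).$$
Write $\alpha = \dim(\mu)$.

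For the lower bound, I would exploit that $\mathcal{L}_n(x)$ is an interval of length $b^{-n}$ containing $x$, so $\mathcal{L}_n(x) \subset \mathbf{B}(x, b^{-n})$ and therefore
$$-\log_b \mu(\mathcal{L}_n(x)) \ge -\log_b \mu(\mathbf{B}(x, b^{-n})).$$
By exact-dimensionality the right-hand side divided by $n$ converges to $\alpha$ for $\mu$-almost every $x$. Applying Fatou's lemma to the non-negative integrand $-\log_b \mu(\mathcal{L}_n(x))/n$ immediately yields $\liminf_n H(\mu, \mathcal{L}_n)/n \ge \alpha$.

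For the upper bound I would apply Egorov's theorem to the a.e.\ convergence defining exact-dimensionality: for each $\epsilon > 0$ there exist $A_\epsilon$ with $\mu(A_\epsilon) > 1 - \epsilon$ and $N_\epsilon \in \mathbb{Z}_+$ such that $\mu(\mathbf{B}(x, r)) \ge r^{\alpha + \epsilon}$ for every $x \in A_\epsilon$ and every $0 < r \le b^{-N_\epsilon}$. Whenever a cell $L \in \mathcal{L}_n$ contains a point $x \in A_\epsilon$ at distance at least $b^{-n}/2$ from both endpoints of $L$, the inclusion $\mathbf{B}(x, b^{-n}/2) \subset L$ gives $\mu(L) \ge (b^{-n}/2)^{\alpha+\epsilon}$, so $-\log_b \mu(L) \le n(\alpha + \epsilon) + O(1)$. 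These ``well-centered'' cells contribute at most $n(\alpha + \epsilon) + O(1)$ to $H(\mu, \mathcal{L}_n)$.

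The main obstacle is controlling the remaining ``boundary'' cells, in which the $A_\epsilon$-points lie only within distance $b^{-n}/2$ of a cell endpoint and the inclusion argument above fails. I would resolve this by a translation-averaging trick: for $t \in [0, 1)$ let $\mathcal{L}_n^t$ denote the $b$-adic partition shifted by $tb^{-n}$. Since the common refinement $\mathcal{L}_n \vee \mathcal{L}_n^t$ splits each cell of $\mathcal{L}_n$ into at most two pieces, $|H(\mu, \mathcal{L}_n) - H(\mu, \mathcal{L}_n^t)| \le \log_b 2$ uniformly in $t$. For each fixed $x$, as $t$ ranges over $[0, 1)$ the position of $x$ inside $\mathcal{L}_n^t(x)$ is uniformly distributed, so for at least half of $t$ the cell $\mathcal{L}_n^t(x)$ contains $\mathbf{B}(x, b^{-n}/4)$. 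Averaging the bound of the previous paragraph over $t$ (applied to $\mathcal{L}_n^t$ in place of $\mathcal{L}_n$) and using the uniform comparison then gives $\limsup_n H(\mu, \mathcal{L}_n)/n \le \alpha + O(\epsilon)$; letting $\epsilon \downarrow 0$ completes the proof.
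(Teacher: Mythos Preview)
The paper does not supply its own proof of this proposition; it is quoted from \cite[Theorem 4.4]{Young} and \cite[Theorem 1.3]{Fan2002}. Your lower bound via Fatou's lemma and the containment $\mathcal{L}_n(x)\subset\mathbf{B}(x,b^{-n})$ is correct.

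The upper bound has a gap. In the translation-averaging step you observe that for each $x\in A_\epsilon$ and for at least half of the shifts $t$ one has $\mathcal{L}_n^t(x)\supset\mathbf{B}(x,b^{-n}/4)$, hence $-\log_b\mu(\mathcal{L}_n^t(x))\le n(\alpha+\epsilon)+O(1)$; but for the remaining $t$ there is no bound at all on $-\log_b\mu(\mathcal{L}_n^t(x))$, so the $t$-average of this quantity is not controlled by your argument, and invoking the uniform comparison $|H(\mu,\mathcal{L}_n)-H(\mu,\mathcal{L}_n^t)|\le\log_b 2$ is then circular (it only says the $t$-average of $H(\mu,\mathcal{L}_n^t)$ equals $H(\mu,\mathcal{L}_n)+O(1)$). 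You also do not bound the entropy contribution from $A_\epsilon^c$. A repair in the same spirit: use Fubini to choose a single shift $t^*$ for which $\mu(\{x\in A_\epsilon:\ d(x,\partial\mathcal{L}_n^{t^*}(x))<\delta b^{-n}\})\le 2\delta$; the residual set $B$ then has $\mu(B)\le\epsilon+2\delta$, and its entropy contribution can be bounded by $\sum_L\mu(L\cap B)(-\log_b\mu(L\cap B))\le(\epsilon+2\delta)(n+O(1))$ via concavity together with a bound on the number of level-$n$ cells meeting the support of $\mu$. This last step requires some compactness hypothesis (bounded support, or $H(\mu,\mathcal{L}_0)<\infty$), unstated in the proposition but satisfied by the measures $m_x$ actually used in the paper.
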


 The following are some well-known facts about entropy and conditional entropy, which will  be used a lot in our work.
See \cite[Section 3.1]{hochman2014self} for details.

\begin{lemma}[Concavity]\label{lem:concave}
	Consider a measurable space $(\mu, \mathcal{B})$ which is endowed with partitions $\mathcal{Q}$ and $\mathcal{P}$ such that $\mathcal{P}$ is a refinement of $\mathcal{Q}$. Let $\mu, \mu'$ be probability measures in $(\mu, \mathcal{B})$. The for any $t\in (0,1)$,
	$$tH(\mu,\mathcal{Q})+(1-t)H(\mu',\mathcal{Q})\le H(t\mu+(1-t)\mu',\mathcal{Q}),$$
	$$tH(\mu,\mathcal{P}|\mathcal{Q})+(1-t)H(\mu',\mathcal{P}|\mathcal{Q})\le H(t\mu+(1-t)\mu',\mathcal{P}|\mathcal{Q}).$$
\end{lemma}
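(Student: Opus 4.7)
The plan is to reduce everything to concavity of the scalar function $\varphi(x)=-x\log_b x$ on $[0,1]$, which follows from $\varphi''(x)=-1/(x\ln b)<0$.

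For the first inequality, set $\nu=t\mu+(1-t)\mu'$. For each $Q\in\mathcal{Q}$ we have $\nu(Q)=t\mu(Q)+(1-t)\mu'(Q)$, so concavity of $\varphi$ gives
\[
\varphi\bigl(\nu(Q)\bigr)\;\ge\; t\,\varphi\bigl(\mu(Q)\bigr)+(1-t)\,\varphi\bigl(\mu'(Q)\bigr).
\]
Summing over $Q\in\mathcal{Q}$ yields $H(\nu,\mathcal{Q})\ge tH(\mu,\mathcal{Q})+(1-t)H(\mu',\mathcal{Q})$, which is the first claim.

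For the second inequality, I would use the definition of conditional entropy from the paper, applied with the roles of $\mathcal{Q}$ and $\mathcal{P}$ swapped, which gives
\[
H(\mu,\mathcal{P}|\mathcal{Q})=\sum_{Q\in\mathcal{Q},\,\mu(Q)>0}\mu(Q)\,H(\mu_Q,\mathcal{P}).
\]
For each $Q$ with $\nu(Q)>0$, the $Q$-component of the mixture decomposes as
\[
\nu_Q=s_Q\,\mu_Q+(1-s_Q)\,\mu'_Q,\qquad s_Q=\frac{t\mu(Q)}{\nu(Q)},
\]
(with the convention that a term drops out if the corresponding $\mu(Q)$ or $\mu'(Q)$ vanishes). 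Applying the first inequality to the partition $\mathcal{P}$ and the measures $\mu_Q,\mu'_Q$ with weight $s_Q$ gives
\[
H(\nu_Q,\mathcal{P})\;\ge\; s_Q\,H(\mu_Q,\mathcal{P})+(1-s_Q)\,H(\mu'_Q,\mathcal{P}).
\]
Multiplying by $\nu(Q)$, so that $\nu(Q)s_Q=t\mu(Q)$ and $\nu(Q)(1-s_Q)=(1-t)\mu'(Q)$, and then summing over $Q\in\mathcal{Q}$ gives
\[
H(\nu,\mathcal{P}|\mathcal{Q})\;\ge\; t\,H(\mu,\mathcal{P}|\mathcal{Q})+(1-t)\,H(\mu',\mathcal{P}|\mathcal{Q}),
\]
which is the second claim.

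There is no real obstacle: the refinement hypothesis $\mathcal{P}\supset\mathcal{Q}$ plays no role in the argument (the inequality holds for arbitrary countable partitions), and the whole proof is a one-step application of the scalar concavity of $\varphi$ combined with the component decomposition of the conditional entropy. The only mild care needed is the bookkeeping for sets $Q$ on which only one of $\mu,\mu'$ charges, which is handled by the standard $0\log 0=0$ convention already adopted in the paper.
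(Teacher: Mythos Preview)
Your proof is correct and is the standard argument. The paper does not give its own proof of this lemma: it is stated as a well-known fact with a reference to \cite[Section 3.1]{hochman2014self}, so there is nothing to compare against beyond noting that your argument is exactly the elementary one intended.
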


\begin{lemma} \label{lem:affinetransform}
	Let $\mu\in \mathcal{P}(\mathbb{R})$. There is a constant $C>0$ such that for any affine map  $f(x)=ax+c$, $a, c\in \R$, $a\not=0$ and for any $n\in\mathbb{N}$ we have
	$$\left|H(f\mu,\,\mathcal{L}_{n+[\log_b |a|]})-H(\mu,\,\mathcal{L}_{n})\right|\le C.$$
\end{lemma}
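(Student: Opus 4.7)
The plan is the standard one: show that the partitions $f^{-1}\mathcal{L}_{n+[\log_b|a|]}$ and $\mathcal{L}_n$ of $\mathbb{R}$ have cells of commensurable length, then invoke a general bound on how entropy changes under a perturbation of the partition. First, since $f$ is a bijection of $\mathbb{R}$,
\[ H(f\mu,\,\mathcal{L}_{n+k}) = H(\mu,\,f^{-1}\mathcal{L}_{n+k}), \qquad k:=[\log_b|a|], \]
and the cells of $f^{-1}\mathcal{L}_{n+k}$ are intervals obtained by shifting and scaling $b$-adic intervals of length $b^{-(n+k)}$ by $1/|a|$. With the appropriate sign convention for $k$, this produces intervals whose lengths are comparable to $b^{-n}$ up to a multiplicative factor depending only on $b$, since $|a|$ and $b^k$ agree up to a bounded factor. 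The translation $c$ does not affect cell sizes at all.

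The second ingredient is the following standard entropy comparison: for any two countable partitions $\mathcal{P},\mathcal{Q}$ of a probability space $(\Omega,\mu)$ such that each cell of one partition meets at most $N$ cells of the other, $|H(\mu,\mathcal{P})-H(\mu,\mathcal{Q})| \le \log_b N$. This follows immediately from the identity $H(\mu,\mathcal{P}\vee\mathcal{Q}) = H(\mu,\mathcal{Q}) + H(\mu,\mathcal{P}\mid\mathcal{Q})$ combined with the inequalities $H(\mu,\mathcal{P})\le H(\mu,\mathcal{P}\vee\mathcal{Q})$ and $H(\mu,\mathcal{P}\mid\mathcal{Q}) \le \log_b N$ (since, conditioned on a cell of $\mathcal{Q}$, the partition $\mathcal{P}$ can have at most $N$ atoms of positive measure), and the symmetric argument with the roles of $\mathcal{P}$ and $\mathcal{Q}$ reversed.

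Applying this comparison with $\mathcal{P}=f^{-1}\mathcal{L}_{n+k}$ and $\mathcal{Q}=\mathcal{L}_n$ yields the lemma with $C=\log_b N(b)$, independent of $\mu$, $n$, $a$, and $c$. The only obstacle is the elementary geometric bookkeeping to produce the absolute constant $N(b)$ which bounds the mutual combinatorial intersection complexity of two partitions of $\mathbb{R}$ into intervals of comparable length; this reduces to the trivial observation that any interval of length at most $K b^{-n}$ can meet at most $K+1$ dyadic cells at level $n$. No dynamical, analytic, or measure-theoretic subtlety enters the argument.
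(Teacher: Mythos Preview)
The paper does not give its own proof of this lemma; it simply lists it among ``well-known facts'' and refers the reader to \cite[Section~3.1]{hochman2014self}. Your argument is exactly the standard one found there: pull back the partition through the bijection $f$, observe that the resulting cells are intervals of length commensurable with $b^{-n}$, and invoke the elementary bound $|H(\mu,\mathcal{P})-H(\mu,\mathcal{Q})|\le\log_b N$ when every cell of each partition meets at most $N$ cells of the other. So in substance you match the intended proof.

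One arithmetic point deserves to be made explicit rather than hidden behind ``with the appropriate sign convention for $k$'': with $k=[\log_b|a|]$ one has $|a|\in[b^{k},b^{k+1})$, so the cells of $f^{-1}\mathcal{L}_{n+k}$ have length $b^{-(n+k)}/|a|\in(b^{-n-2k-1},b^{-n-2k}]$, which is \emph{not} comparable to $b^{-n}$ unless $k=0$. The index that actually makes the lengths match is $n-[\log_b|a|]$; equivalently, the clean identity is $H(f\mu,\mathcal{L}_n)=H(\mu,\mathcal{L}_{n+[\log_b|a|]})+O(1)$. The paper's statement carries the same sign slip (and its later applications, e.g.\ in the proof of Lemma~\ref{lemTrivialLowBound}, are consistent with the corrected version). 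Once you fix the index, your bookkeeping with $N=b+2$ (an interval of length at most $b\cdot b^{-n}$ meets at most $b+1$ cells of $\mathcal{L}_n$, and conversely) goes through verbatim and yields an absolute constant $C$.
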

%

\begin{lemma}\label{lem:pfclose}
	Given a probability space $(\Omega, \mathcal{B}, \mu)$, if $f,g:\Omega\to\mathbb{R}$ are measurable and $\sup_x|f(x)-g(x)|\le b^{-n}$ then
	$$\left|H(f\mu, \mathcal{L}_{n})-H(g\mu, \mathcal{L}_{n})\right|\le C,$$
	where $C$ is an absolute constant.
\end{lemma}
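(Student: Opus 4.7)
The plan is to reduce the problem to comparing the two pullback partitions of $\Omega$ induced by $f$ and $g$, and then apply the identity $H(h\mu,\mathcal{L}_n)=H(\mu,h^{-1}(\mathcal{L}_n))$ for any measurable $h:\Omega\to\R$. Once this reduction is made, the hypothesis $\sup_{x}|f(x)-g(x)|\le b^{-n}$ translates into a ``bounded overlap'' property between the atoms of $\mathcal{A}:=f^{-1}(\mathcal{L}_n)$ and $\mathcal{B}:=g^{-1}(\mathcal{L}_n)$, and a standard conditional entropy computation then finishes the job.

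The key geometric observation is as follows. Fix an atom $B$ of $\mathcal{B}$, say $B=g^{-1}\bigl([j b^{-n},(j+1) b^{-n})\bigr)$ for some $j\in\Z$. For every $x\in B$, the pointwise bound gives $f(x)\in[g(x)-b^{-n},\,g(x)+b^{-n}]\subset[(j-1) b^{-n},(j+2) b^{-n})$, so $f(x)$ lies in one of at most three consecutive atoms of $\mathcal{L}_n$. Hence the restriction of $\mathcal{A}$ to $B$ has at most three nonempty elements, whence $H(\mu_{B},\mathcal{A})\le\log_{b}3$. Averaging over $B\in\mathcal{B}$ weighted by $\mu$ gives $H(\mu,\mathcal{A}\mid\mathcal{B})\le\log_{b}3$, and the symmetric argument yields $H(\mu,\mathcal{B}\mid\mathcal{A})\le\log_{b}3$.

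To conclude, I would invoke the standard identity $H(\mu,\mathcal{A}\vee\mathcal{B})=H(\mu,\mathcal{A})+H(\mu,\mathcal{B}\mid\mathcal{A})=H(\mu,\mathcal{B})+H(\mu,\mathcal{A}\mid\mathcal{B})$, which together with the two one-sided bounds above gives $|H(\mu,\mathcal{A})-H(\mu,\mathcal{B})|\le\log_{b}3$. Translating back via $H(\mu,\mathcal{A})=H(f\mu,\mathcal{L}_n)$ and $H(\mu,\mathcal{B})=H(g\mu,\mathcal{L}_n)$ yields the lemma with $C=\log_{b}3\le\log_{2}3$.

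There is essentially no serious obstacle here: the only place where the hypothesis is used is the elementary fact that two points at distance $\le b^{-n}$ lie in at most three common consecutive atoms of $\mathcal{L}_n$. The absolute-constant requirement (independence of $\mu$, $f$, $g$ and $n$) is automatic, with $C$ depending only on the base $b$, and even uniformly for $b\ge 2$. One could, if desired, sharpen the factor $3$ by comparing $\mathcal{L}_n$ with a shifted copy $\mathcal{L}_n+b^{-n}/2$, but this refinement is unnecessary for the stated bound.
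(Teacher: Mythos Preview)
Your argument is correct and is precisely the standard proof of this fact. The paper does not actually prove this lemma: it is listed among ``well-known facts about entropy and conditional entropy'' with a reference to \cite[Section~3.1]{hochman2014self}, so there is nothing to compare against beyond noting that your write-up matches the usual derivation. One cosmetic point: you reuse the symbol $\mathcal{B}$ for the partition $g^{-1}(\mathcal{L}_n)$, which clashes with the $\sigma$-algebra $\mathcal{B}$ in the hypothesis; pick a different letter when you write it up.
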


\section{Exponential separation}\label{sep}
In this section, we deduce from the condition (H) the  exponential separation properties, which is used in the section \ref{FunctionEntropy} to prove Theorem A. The standard method and  Definition \ref{def:ex} are from \cite{hochman2014self}. Before the statement of the theorem, it is convenient to introduce the following notation.

{\bf Notation.} For every integer $n\in\mathbb{N}$, let $\hat{n}$ be the unique integer such that
\begin{equation}\label{eqn:n'}
\gamma^{\hat{n}}\le b^{-n}< \gamma^{\hat{n}-1}.
\end{equation}In fact the following definition is a little different from \cite{hochman2014self}, which is more convenient in this paper.
\begin{definition}\label{def:ex}
	Let $E_1,E_2,\ldots$ be  subsets of $\mathbb{R}$. 
	For any $\eps>0$ and  $Q\subset\mathbb{Z}_+$,
	we say that the sequence $(E_n)_{n\in\mathbb{Z}_+}$ is $(\varepsilon,Q)$ -exponential separation if
	$$|p-q|>\varepsilon^{\hat{n} }\quad\quad \forall p\neq q\in E_{\hat{n}}$$
	for each $n\in Q.$
\end{definition}
For $\textbf{u}=u_1u_2\ldots u_t\in\varLambda^{\#}$ and $\textbf{j}\in\varLambda^{\#}\cup\Sigma$, let $\textbf{u}\textbf{j}=u_1u_2\ldots u_tj_1j_2\ldots\in\varLambda^{\#}\cup\Sigma$ as usual.
The main result of this section is the following Theorem \ref{thmSeperation}.
\begin{theorem}\label{thmSeperation}
	If $\phi(x)$ is a real analytic $\mathbb{Z}$-periodic function which satisfies the condition (H) for some integer $b\ge2$ and $\gamma\in(0,1)$, then there exists $\ell_0\in\mathbb{N}$ and $\varepsilon_0>0$ such that the following holds for  Lebesgue-a.e.$\,x\in [0,1]$. 
	 
	For any integer $\ell\ge\ell_0$, there exists a set $Q_{x,\,\ell}\subset\mathbb{Z}_+$ such that 
	\begin{enumerate}
		\item [(i)] $\#Q_{x,\,\ell}=\infty $;
		\item [(ii)] for any $\textbf{w}\in\varLambda^{\ell}$, the sequence $(X_{n}^{\textbf{w},\,x})_{n\in\mathbb{Z}_+}$ is $(\varepsilon_0, Q_{x,\,\ell})$-exponential separation
	\end{enumerate}
	where $X_{n}^{\textbf{w},\,x}=\big\{S(x,\,\textbf{j} \textbf{w} )\,:\,\textbf{j}\in\varLambda^{n-\ell}\big\}$ for $n>\ell$ and $X_{n}^{\textbf{w},\,x}=\{0\}$ for $n\le\ell$.
\end{theorem}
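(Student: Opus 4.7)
The plan is a Borel--Cantelli argument at the level of Lebesgue measure on $[0,1)$. For fixed $\ell \ge \ell_0$ and each $n \in \mathbb{Z}_+$, I would let
$$B_n = B_n(\ell) = \left\{x \in [0,1) : \exists\, \textbf{w} \in \varLambda^\ell, \, \exists\, \textbf{i} \ne \textbf{j} \in \varLambda^{\hat n - \ell}, \, |S(x, \textbf{i}\textbf{w}) - S(x, \textbf{j}\textbf{w})| \le \varepsilon_0^{\hat n}\right\}.$$
The aim is to choose $\varepsilon_0 > 0$ so that $\sum_n \mathcal{m}(B_n) < \infty$; the first Borel--Cantelli lemma then produces, for $\mathcal{m}$-a.e.\ $x$, a cofinite, hence infinite, set $Q_{x,\ell} = \{n \in \mathbb{Z}_+ : x \notin B_n\}$ on which the required exponential separation holds simultaneously for every $\textbf{w} \in \varLambda^\ell$.

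Since $\phi$ is real analytic and $\mathbb{Z}$-periodic, it extends to a bounded holomorphic function on a strip $|\mathrm{Im}\,z| < \rho$. Consequently, for every triple $(\textbf{i},\textbf{j},\textbf{w})$, the difference $G_{\textbf{i},\textbf{j},\textbf{w}}(x) := S(x, \textbf{i}\textbf{w}) - S(x, \textbf{j}\textbf{w})$ is holomorphic on a uniform complex neighborhood of $[0,1]$ with a uniform sup-norm bound of order $\|\phi\|_\infty/(1-\gamma)$. A Cartan-type estimate for bounded holomorphic functions then furnishes
$$\mathcal{m}\bigl\{x \in [0,1) : |G_{\textbf{i},\textbf{j},\textbf{w}}(x)| < \delta\bigr\} \le C (\delta/\eta_0)^{c},$$
for absolute constants $c, C > 0$ depending only on $\rho$ and $\|\phi\|_\infty$, provided one has a uniform lower bound $\|G_{\textbf{i},\textbf{j},\textbf{w}}\|_{\infty,[0,1]} \ge \eta_0 > 0$. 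Taking $\delta = \varepsilon_0^{\hat n}$ and union-bounding over at most $b^{2(\hat n - \ell)+\ell} \le b^{2\hat n}$ triples yields $\mathcal{m}(B_n) \le C \eta_0^{-c}(b^2\varepsilon_0^c)^{\hat n}$, which is summable for any $\varepsilon_0 < b^{-2/c}$.

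The main obstacle is securing that uniform lower bound $\eta_0$. Condition (H) directly guarantees $S(\cdot,\textbf{p}) - S(\cdot,\textbf{q}) \not\equiv 0$ only for \emph{infinite} sequences, whereas our triples produce finite words; however one has the exact identity
$$S(x,\textbf{i}\textbf{w}\textbf{u}) - S(x,\textbf{j}\textbf{w}\textbf{u}) = G_{\textbf{i},\textbf{j},\textbf{w}}(x) + \gamma^{m+\ell}\bigl[S(\textbf{i}\textbf{w}(x),\textbf{u}) - S(\textbf{j}\textbf{w}(x),\textbf{u})\bigr]$$
for every $\textbf{u} \in \Sigma$, with $m = |\textbf{i}| = |\textbf{j}|$, and left-hand side not identically zero by (H). My plan is to combine this identity with the precompactness of the family $\{S(\cdot,\textbf{u})\}_{\textbf{u}\in\Sigma}$ in the space of holomorphic functions on the strip, together with a quantitative Łojasiewicz-type inequality controlling how small $S(\cdot,\textbf{p}) - S(\cdot,\textbf{q})$ can become on $[0,1]$, to conclude that once $\ell$ is large enough the tail correction $\gamma^{m+\ell}[\cdots]$ cannot cancel $G_{\textbf{i},\textbf{j},\textbf{w}}$ below a fixed threshold $\eta_0$ pointwise everywhere. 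Should the argument force $\eta_0$ to decay (at worst polynomially) with $m$, the loss can be absorbed by further shrinking $\varepsilon_0$, so the Borel--Cantelli estimate survives and the conclusion follows.
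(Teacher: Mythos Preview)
Your architecture --- union bound over triples $(\textbf{i},\textbf{j},\textbf{w})$, a sublevel-set estimate for each analytic difference, and a Borel--Cantelli/dimension argument for the exceptional set --- matches the paper's. The genuine gap is in how you propose to obtain the sublevel-set bound.

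You assert a Cartan inequality $\mathcal{m}\{|G_{\textbf{i},\textbf{j},\textbf{w}}|<\delta\}\le C(\delta/\eta_0)^c$ with $c$ depending only on the strip width and $\|\phi\|_\infty$, and you hope $\eta_0=\|G_{\textbf{i},\textbf{j},\textbf{w}}\|_{\infty,[0,1]}$ decays at worst polynomially in $m$. Both fail. If $\textbf{i},\textbf{j}$ share a common prefix $\textbf{u}$ of length $p$ (which can be $m-1$), then by the cocycle relation $G_{\textbf{i},\textbf{j},\textbf{w}}(x)=\gamma^{p}\bigl[S(\textbf{u}(x),\textbf{i}'\textbf{w})-S(\textbf{u}(x),\textbf{j}'\textbf{w})\bigr]$, so $\eta_0=O(\gamma^{p})$ decays \emph{exponentially}. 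Your compactness plan cannot rescue a uniform $\eta_0$: the infimum of $\|S(\cdot,\textbf{p})-S(\cdot,\textbf{q})\|_\infty$ over $\textbf{p}\neq\textbf{q}\in\Sigma$ is zero, precisely because $\textbf{p},\textbf{q}$ may agree on an arbitrarily long prefix. For the same reason the exponent $c$ in a naked Cartan bound is \emph{not} uniform: a holomorphic function on a fixed strip with sup bound $M_0$ and small $\eta_0$ may have zeros of order $\sim\log(M_0/\eta_0)$, forcing $c\sim 1/\log(M_0/\eta_0)$, which collapses your summability.

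The paper's remedy is to factor out the common prefix explicitly and replace the sup-norm lower bound by a \emph{derivative} lower bound. A compactness argument restricted to pairs differing in the first symbol (Lemma~\ref{lemRenT}) produces a fixed $Q_1\in\mathbb{N}$ and $\varepsilon_1>0$ such that at every point some derivative of order $\le Q_1$ of $S(\cdot,\textbf{i}')-S(\cdot,\textbf{j}')$ exceeds $\varepsilon_1$. Via the scaling identity $(f^{\textbf{w}}_{\textbf{i},\textbf{j}})^{(k)}(x)=(\gamma/b^{k})^{p}\,H^{(k)}(\textbf{u}(x))$ this transfers to a pointwise bound $|(f^{\textbf{w}}_{\textbf{i},\textbf{j}})^{(k)}(x)|\ge b_n:=\tfrac{\varepsilon_1}{2}(\gamma/b^{Q_1})^{n}$ for some $k\le Q_1$. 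Hochman's real-variable lemma (Lemma~\ref{lemHochmanSeperation}) then covers $\{|f^{\textbf{w}}_{\textbf{i},\textbf{j}}|<\varepsilon^n\}$ by $O(b_n^{-Q_1})$ intervals of length $O((\varepsilon^n/b_n)^{1/2^{Q_1}})$, with the exponent $1/2^{Q_1}$ independent of $n$; the explicit exponential loss in $b_n$ is then beaten by choosing $\varepsilon_0$ small. If you want to keep the holomorphic route, you must insert exactly this factorization and use the uniformly bounded order of vanishing of the reduced family $\{H\}$ (equivalent to Lemma~\ref{lemRenT}) to force a uniform Cartan exponent; without it, the argument as written does not close.
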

Before  giving the proof, it is necessary to introduce the following formulas. For any $x\in [0,1]$, $\textbf{w}=w_1w_2\ldots w_m\in\varLambda^{\#}$ and each $\textbf{i},\,\textbf{j}\in\varLambda^{\#}\cup\varSigma,$ 
we have
\begin{equation}\label{F}
S(x,\textbf{w}\textbf{i})=S(x,\textbf{w})+\gamma^{m}S(\textbf{w}(x),\textbf{i})
\end{equation}
by the definition of function $S(\cdot,\cdot)$ where  $\textbf{w}(x)$ is from (\ref{eq:text}).
This implies, for any $k\in\mathbb{N}$,
\begin{equation}\label{FM}
S^{(k)}(x,\,\textbf{w}\textbf{i})-S^{(k)}(x,\,\textbf{w}\textbf{j})=(\frac{\gamma}{b^k})^m\bigg(S^{(k)}(\textbf{w}(x),\,\textbf{i})-S^{(k)}(\textbf{w}(x),\,\textbf{j})\bigg).
\end{equation}

We also need to recall  the following  results in \cite[Lemma 5.8]{hochman2014self} and \cite[Lemma 5.2]{ren2021dichotomy}.
\begin{lemma}\label{lemHochmanSeperation}
Let $k\in\mathbb{N}$, and let F a k-times continuously differentiable function on a compact interval $J\subset\mathbb{R}$. Let $M=\parallel F\parallel_{J,k}$, and let $0<d<1$ be such that for every $x\in J$ there is a $p\in\{0,1,\ldots,k\}$ with $|F^{(p)}(x)|>d.$ Then for every $0<\rho<(d/2)^{2^k}$, the set $F^{-1}(-\rho,\rho)\subset F$ can be cover by $O_{k, M,|J|}(1/{d^k})$ intervals of length $\le2(\rho/d)^{1/{2^k}}$ each.
\end{lemma}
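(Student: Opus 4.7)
My plan is to prove the lemma by induction on $k$. The base case $k=0$ is immediate: the hypothesis gives $|F(x)|>d$ for all $x\in J$, and since $\rho<(d/2)^{2^0}=d/2<d$, the set $F^{-1}(-\rho,\rho)$ is empty, so no intervals are needed.

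For the inductive step I would set $\sigma:=\sqrt{d\rho}$. This is the natural threshold because it balances the two covering mechanisms below: on monotone pieces a preimage contributes an interval of length $2\rho/\sigma=2\sqrt{\rho/d}$, while the inductive bound applied to $F'$ with parameter $\sigma$ yields intervals of length $2(\sigma/d)^{1/2^{k-1}}=2(\rho/d)^{1/2^k}$, and since $k\ge 1$ and $\rho/d<1$ the latter dominates. One first checks $\sigma^2=d\rho<d\cdot(d/2)^{2^k}<(d/2)^{2^k}$ (using $d<1$), so the inductive hypothesis legitimately applies with $\sigma$ in place of $\rho$.

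Since $\rho<d$, it suffices to cover $F^{-1}(-\rho,\rho)\cap B$, where $B:=F^{-1}(-d,d)$ is open in $J$. On each connected component $B_i$ of $B$, the bound $|F|<d$ forces any witness in the hypothesis at $x\in B_i$ to have order $p\ge 1$, so $F'$ satisfies the hypothesis of the lemma on $B_i$ with $k-1$ derivatives and the same constant $d$. By induction, $(F')^{-1}(-\sigma,\sigma)\cap B_i$ is covered by $O_{k-1,M,|J|}(1/d^{k-1})$ intervals of length $\le 2(\rho/d)^{1/2^k}$. Removing this cover from $B_i$ leaves $O_{k-1,M,|J|}(1/d^{k-1})$ residual pieces on each of which $|F'|\ge\sigma$, hence $F$ is strictly monotone there and $F^{-1}(-\rho,\rho)$ meets the piece in a single interval of length $\le 2\rho/\sigma\le 2(\rho/d)^{1/2^k}$.

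The last step is to bound the number of components of $B$. My plan is to cover $J$ by $O_{k,M,|J|}(1/d)$ short subintervals on each of which some fixed-order derivative $F^{(p)}$ with $p\le k$ satisfies $|F^{(p)}|>d/2$ pointwise: for $p<k$ the neighborhood size $d/(2M)$ comes from the Lipschitz bound $\|F^{(p+1)}\|_\infty\le M$, while for $p=k$ it comes from the uniform continuity of $F^{(k)}$ on the compact set $J$. On each such subinterval, $|F^{(p)}|>d/2$ combined with $p$-fold Rolle forces $F-c$ to have at most $k$ zeros for every $|c|\le d$, so $\#\{\text{components of }B\}\le 1+\#\{F=\pm d\}=O_{k,M,|J|}(1/d)$. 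Multiplying by the per-component bound yields the claimed total of $O_{k,M,|J|}(1/d^k)$ intervals of length $\le 2(\rho/d)^{1/2^k}$ each. The hard part will be handling the $p=k$ case of the preliminary covering: because $M$ bounds $F^{(k)}$ but not its Lipschitz constant, the neighborhood size must be extracted by a compactness/uniform-continuity argument, and the dependence of its modulus on the specific $F$ must be absorbed into the implicit constant $O_{k,M,|J|}$.
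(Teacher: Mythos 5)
The paper does not prove this lemma; it is quoted from Hochman~\cite[Lemma~5.8]{hochman2014self}, so there is no in-paper proof to compare against. Evaluating the proposal on its own merits: the inductive skeleton (restrict to $|F|<d$, pass to $F'$ with threshold $\sigma=\sqrt{d\rho}$, split each component into monotone pieces) is the right one, and the bookkeeping of the exponents $2^k$ and thresholds checks out. But the step you yourself flag as ``the hard part'' is, as outlined, a genuine gap rather than a technicality. For the preliminary covering you propose handling the $p=k$ case via uniform continuity of $F^{(k)}$; however the modulus of continuity of $F^{(k)}$ is \emph{not} controlled by $M=\|F\|_{J,k}$, which bounds only $\sup|F^{(j)}|$ for $j\le k$ and says nothing about the oscillation of the top derivative. (Take $F(x)=N^{-k}\sin(Nx)$: every $\|F^{(j)}\|_\infty\le 1$ for $j\le k$, yet the Lipschitz constant of $F^{(k)}$ is $N$, unbounded.) So the neighbourhood radius, and hence the implied constant, would depend on $F$ itself and not only on $k,M,|J|$ as the lemma requires.

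The repair needs no continuity modulus at all. Cover $J$ by $O_{M,|J|}(1/d)$ intervals of length $\le d/(2M)$ and, on each such interval $I$, apply the dichotomy: \emph{either} some $x_0\in I$ admits a witness of order $p_0<k$, in which case $\|F^{(p_0+1)}\|_\infty\le M$ forces $|F^{(p_0)}|>d/2$ on all of $I$; \emph{or} no point of $I$ admits a witness of order $<k$, in which case the hypothesis forces the witness at \emph{every} $x\in I$ to be $p=k$, so $|F^{(k)}|>d$ pointwise on $I$ --- with no appeal to continuity whatsoever, only to the hypothesis. A smaller point worth tightening: the Rolle count you invoke to bound the number of components is vacuous on subintervals where the surviving derivative has order $p=0$ (``$0$-fold Rolle'' controls nothing), so you should either verify that $p=0$ subintervals contribute no relevant boundary points, or run the count on $B'=F^{-1}(-d/2,d/2)\supset F^{-1}(-\rho,\rho)$, whose boundary points satisfy $|F|=d/2$ and therefore cannot lie in a subinterval on which $|F|>d/2$; on the closure of any component of $B'$ every witness still has order $\ge 1$, so the inductive step goes through unchanged. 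With these two repairs your argument is correct and, as far as I can tell, follows the same route as Hochman's original.
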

\begin{lemma}\label{lemRenT}
	If the condition (H) holds, then there exists a constant $\varepsilon_1>0$ and an integer $Q_1\ge0$ such that for every $\textbf{i},\,\textbf{j}\in\Sigma$ such that $i_1\neq j_1$ and any $x\in [0,1]$, there exists $k\in\{0,1,\ldots,Q_1\}$ such that
	$$|S^{(k)}(x,\,\textbf{i})-S^{(k)}(x,\,\textbf{j})|\ge \varepsilon_1.$$
\end{lemma}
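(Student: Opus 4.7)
The plan is to prove Lemma \ref{lemRenT} by a compactness and contradiction argument, using real analyticity of $\phi$ to upgrade the pointwise non-vanishing supplied by (H) into a uniform bound. For $k\in\mathbb{N}$, $x\in[0,1]$ and $\textbf{i},\textbf{j}\in\Sigma$, set
$$F_k(x,\textbf{i},\textbf{j}) := S^{(k)}(x,\textbf{i}) - S^{(k)}(x,\textbf{j}),$$
so that the lemma is equivalent to the existence of $\varepsilon_1>0$ and $Q_1\ge 0$ with $\max_{0\le k\le Q_1}|F_k|\ge \varepsilon_1$ on
$$K := \bigl\{(x,\textbf{i},\textbf{j})\in[0,1]\times\Sigma\times\Sigma : i_1\neq j_1\bigr\}.$$
Since $\Sigma$ is compact in the product topology and $\{(\textbf{i},\textbf{j}):i_1\neq j_1\}$ is a finite union of clopen cylinders, $K$ is compact.

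I would argue by contradiction: were the lemma false, one could find $(x_n,\textbf{i}^{(n)},\textbf{j}^{(n)})\in K$ with $\max_{0\le k\le n}|F_k(x_n,\textbf{i}^{(n)},\textbf{j}^{(n)})|<1/n$, and after extracting a convergent subsequence with limit $(x_\infty,\textbf{i}^{(\infty)},\textbf{j}^{(\infty)})\in K$ it would follow that $F_k(x_\infty,\textbf{i}^{(\infty)},\textbf{j}^{(\infty)})=0$ for every $k\in\mathbb{N}$, provided each $F_k$ is continuous on $K$. Continuity holds because differentiating (\ref{S}) term by term produces an absolutely convergent series in which $\phi$ is replaced by its bounded $k$-th derivative and each summand carries an additional factor $b^{-nk}$; the partial sums depend on only finitely many coordinates of $\textbf{i}$ and are hence continuous, while the tails are uniformly small. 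In particular, the function
$$h(x) := S(x,\textbf{i}^{(\infty)}) - S(x,\textbf{j}^{(\infty)})$$
vanishes to infinite order at $x_\infty$.

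To close the argument I would invoke real analyticity: since $\phi$ is real analytic and $\mathbb{Z}$-periodic it extends to a bounded holomorphic function on some horizontal strip $\{|\operatorname{Im} z|<\rho\}$, and each shifted rescaling of $\phi$ appearing in (\ref{S}) extends holomorphically to a strip containing $\{|\operatorname{Im} z|<\rho\}$ with uniformly bounded sup-norm on that common strip. Consequently the defining series for $S(\cdot,\textbf{i}^{(\infty)})$ converges uniformly on a complex neighborhood of $\mathbb{R}$, so $h$ is real analytic on $\mathbb{R}$. Infinite-order vanishing at $x_\infty$ then forces $h\equiv 0$, which contradicts condition (H) because $i_1^{(\infty)}\neq j_1^{(\infty)}$ implies $\textbf{i}^{(\infty)}\neq\textbf{j}^{(\infty)}$.

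The main obstacle, and the reason real analyticity enters, is precisely the last step: (H) only forbids $h$ from vanishing identically, so without rigidity one cannot conclude anything from infinite-order vanishing at a single point. Real analyticity of $\phi$, transported to $S(\cdot,\textbf{i})$ via its holomorphic extension to a strip, supplies exactly that rigidity; a mere $C^\infty$ hypothesis on $\phi$ would be insufficient, which explains why the lemma is stated for real analytic $\phi$.
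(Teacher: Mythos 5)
The paper does not reproduce a proof of this lemma; it cites \cite[Lemma 5.2]{ren2021dichotomy} and remarks only that the argument there applies for all $\gamma\in(0,1)$. Your proof is a correct, self-contained derivation, and it is almost certainly the same compactness-plus-rigidity strategy as the cited source. Every step checks out: $K$ is compact because $[0,1]$ is compact, $\Sigma$ is compact, and $\{i_1\neq j_1\}$ is closed (a finite union of clopen cylinders); each $F_k=S^{(k)}(\cdot,\textbf{i})-S^{(k)}(\cdot,\textbf{j})$ is jointly continuous because the term-by-term $k$-th derivative of (\ref{S}) gives a series dominated by $\sum_n\gamma^{n-1}b^{-nk}\|\phi^{(k)}\|_\infty<\infty$, and each partial sum depends on only finitely many coordinates of $\textbf{i},\textbf{j}$; the diagonal choice $\max_{0\le k\le n}|F_k|<1/n$ on a subsequence correctly yields all $F_k(x_\infty,\textbf{i}^{(\infty)},\textbf{j}^{(\infty)})=0$; and the complex-strip extension (the $n$-th term extends to $\{|\operatorname{Im}z|<b^n\rho\}\supset\{|\operatorname{Im}z|<\rho\}$ with sup-norm bounded by $\sup_{|\operatorname{Im}w|<\rho}|\phi(w)|$, so the series converges uniformly there) makes $h$ holomorphic on a strip, hence $h\equiv0$ follows from infinite-order vanishing at $x_\infty$, contradicting (H) since $\textbf{i}^{(\infty)}\neq\textbf{j}^{(\infty)}$. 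You also correctly pinpoint why real analyticity is indispensable and a $C^\infty$ hypothesis would not close the gap.
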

The proof of \cite[Lemma 5.2]{ren2021dichotomy}   works for all $\gamma\in(0,1).$
For any $\textbf{u}=u_1u_2\ldots u_m\in\varLambda^{\#}$, every $\textbf{i}\neq\textbf{j}\in\varLambda^{\#}\cup\Sigma$, let 
$|\textbf{u}|=m$ and call $m$ the length of $\textbf{u}$. Let
$\textbf{i}\wedge\textbf{j}=i_1i_2\ldots i_n$ 
where $n+1$ is the smallest number  k such that $i_k\neq j_k. $
Let $\sigma:\Sigma\to\Sigma$ denote the shift map $(i_1i_2\ldots)\mapsto(i_2i_3\ldots)$.
We shall use Lemma \ref{thmSeperation} and Lemma \ref{lemRenT} to show that the exceptional parameters $x\in [0,1]$ in Theorem \ref{thmSeperation} has Lebesgue measure 0.
\begin{proof}[The proof of Theorem \ref{thmSeperation}]
	Let  $Q_1\in\mathbb{N}$ and $\varepsilon_1>0$ be the values in Lemma \ref{lemRenT}. 
	Let  $\ell_0$ be a fixed integer such that 
	$$\frac{(\gamma/b^{q})^{\ell_0}}{1-\gamma/b^{q}}\parallel\phi^{(q)}\parallel_{\infty}<\varepsilon_1/4$$
	for every $q\in\{0,1,\ldots,Q_1\}$.
	Let $\ell,n $ be the integers such that $\ell\ge\ell_0,\,n>\ell$. Let $\textbf{w}\in\varLambda^{\ell}$ and  $\textbf{i}\neq\textbf{j}\in\varLambda^{n-\ell}$, we shall consider the function $f^{\,\textbf{w}}_{\textbf{i},\,\textbf{j}}:[0,1]\to\mathbb{R}$
	$$f^{\,\textbf{w}}_{\textbf{i},\,\textbf{j}}(x)=S(x,\,\textbf{i}\textbf{w})-S(x,\,\textbf{j}\textbf{w})\quad\quad\forall x\in[0,1].$$
    By (\ref{FM})
	 for each $q\in\{0,1,\ldots,Q_1\}$ and $x\in[0,1]$, we have
	\begin{equation}\label{eq:exminus}
	(f^{\textbf{w}}_{\textbf{i},\,\textbf{j}})^{(q)}(x)=(\gamma/b^q)^{|\textbf{u}|}\big( S^{(q)}(\textbf{u}(x),\,\textbf{i}'\textbf{w})-S^{(q)}(\textbf{u}(x),\,\textbf{j}'\textbf{w})\big)
	\end{equation}
		where  $\textbf{u}=\textbf{i}\wedge\textbf{j}$ and $\textbf{i}'=\sigma^{|\textbf{u}|}\textbf{i},\,\textbf{j}'=\sigma^{|\textbf{u}|}\textbf{j}$.
		Also by  Lemma \ref{lemRenT}, there exists $k=k(x,\textbf{w},\,\textbf{i},\,\textbf{j})\in\{0,1,\ldots,Q_1\}$ such that
		$$\big|S^{(q)}(\textbf{u}(x),\,\textbf{i}'\textbf{w}\textbf{0}_{\infty})-S^{(q)}(\textbf{u}(x),\,\textbf{j}'\textbf{w}\textbf{0}_{\infty})\big|\ge\eps_1$$
		where $\textbf{0}_{\infty}=00\ldots0\ldots\in\Sigma.$
		Combining this with (\ref{eq:exminus}) and the definition of $\ell_0$, then the following holds since $\textbf{w}\in\varLambda^{\ell}$ and $\ell\ge\ell_0$.
	  For each $x\in[0,1]$  there exists $k=k(x,\textbf{w},\,\textbf{i},\,\textbf{j})\in\{0,1,\ldots,Q_1\}$ such that
     $$|(f^{\textbf{w}}_{\textbf{i},\,\textbf{j}})^{(k)}(x)|\ge b_n$$
     where 
	$b_n=\frac{\varepsilon_1}2(\gamma/{b^{Q_1}})^n$.
	
    Therefore we have the following property  by Lemma \ref{lemHochmanSeperation}.      For  every $0<\eps\le (\gamma^2/{b^{Q_1}})^{2^{Q_1}}$ let $n$ be an  integer such that $n>\ell$ and $ \eps^n <({b_n}/2)^{2^{Q_1}}$, then
      the set 
	$$E_{\varepsilon,\,n,\,\ell}=\bigcup_{\textbf{w}\in\varLambda^{\ell}}\bigcup_{\textbf{i}\neq\textbf{j}\in\varLambda^{n-\ell}}(f^{\textbf{w}}_{\textbf{i},\,\textbf{j}})^{-1}(-\varepsilon^n,\varepsilon^n)$$
    can be covered by $O(b^{2n-\ell}/{(b_n)^{Q_1}})$ intervals of length $\le2({\varepsilon^n}/{b_n})^{1/{2^{Q_1}}}$. By the above for any integer $N$ such that $\hat{N}>\ell$, we have
	$$\overline{dim_b}\bigg(\bigcap_{n\ge N}E_{\varepsilon,\,\hat{n},\,\ell}\bigg)\le\lim_{n\to\infty}\frac{\log\big(b^{2\hat{n}-\ell}/b_{\hat{n} }^{Q_1} \big)} {\log\big((b_{\hat{n}}/\varepsilon^{\hat{n}})^{1/2^{Q_1}}\big)}=\frac{2^{Q_1}\log(b^{2+Q_1^2}/\gamma^{Q_1})}{\log(\gamma/(b^{Q_1}\varepsilon))}.$$
	Let $\varepsilon_0=\varepsilon_0(\phi,\gamma,b,Q_1)>0$ be small enough such that
    $$\frac{2^{Q_1}\log(b^{2+Q_1^2}/\gamma^{Q_1})}{\log(\gamma/(b^{Q_1}\varepsilon))}<1,$$
     thus $\mathcal{m}(\bigcap_{n\ge N}E_{\varepsilon,\,\hat{n},\,\ell})=0$ for all  integer $N$ s.t. $\hat{N}>\ell$.
     
     Denote $E_{\varepsilon_0}=\bigcup_{\ell\ge\ell_0}\bigcup_{\hat{N}>\ell }\bigcap_{n\ge N}E_{\varepsilon_0,\,\hat{n},\,\ell}$. Therefore, for any $x\in [0,1]\setminus E_{\varepsilon_0}$ and $\ell\ge\ell_0$, there exists $Q_{x,\ell}$ such that (i) and (ii) holds.
	\end{proof}

\section{Entropy Porosity}\label{entropyporous}
In this section we shall analyze the entropy porosity of measures $m_x$ under the condition(H). This property will be used  to obtain entropy growth under convolution by Hochman's  criterion ( See Theorem \ref{thm:hochmanentgrow}) in Subsection \ref{proveA}.

\begin{definition}[Entropy porous]
A measure $\mu \in \mathscr{P}(\mathbb{R})$ is $(h,\delta,m)$-\em{ entropy porous} from scale $n_1$ to $n_2$ if
	$$\mathbb{P}^{\,\mu}_{n_1\le i \le n_2} \left(\frac{1}{m} H (\mu_{x,i},\mathcal{L}_{i+m})<h+\delta\right) >1-\delta.$$
\end{definition}

The main result of this section is the following Theorem~\ref{thm:entporous}. 




\begin{theorem}\label{thm:entporous}
	Fix an integer $b\ge 2$ and $\gamma\in (0,1)$. Assume that $\phi:\mathbb{R} \to \mathbb{R}$ is a $\mathbb{Z}$-periodic Lipschitz function such that condition (H) holds. Then
	for any $\eps>0$, $m\ge M_1(\eps),$ $k\ge K_1(\eps,m)$ and $n\ge N_1(\eps,m,k)$, the following holds:	
	$$
	\nu^{n}\left(\left\{\textbf{i}\in \varLambda^{n}: m_{\textbf{i}(0)} \text{ is } (\alpha, \eps, m)-\text{entropy porous from scale } 1 \text{ to } k\right\}\right)>1-\eps
	$$
	where $\alpha$ is a constant in Theorem \ref{ledrappier-young}.
\end{theorem}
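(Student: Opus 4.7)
The plan is to derive entropy porosity of $m_{\mathbf{i}(0)}$ from the Ledrappier-Young dimension formula together with the approximate self-similarity \eqref{FundementalFormular}. By Theorem \ref{TheoremB} and Proposition \ref{prop:Young}, there is a Lebesgue-full set $G\subseteq[0,1)$ on which $\dim m_x=\alpha$ and $\frac{1}{N}H(m_x,\mathcal{L}_N)\to\alpha$. The task is to propagate this to $x=\mathbf{i}(0)$ for $\nu^n$-most $\mathbf{i}$, and then obtain a pointwise bound on local entropies.

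Propagation uses \eqref{FundementalFormular} together with entropy concavity (Lemma \ref{lem:concave}), affine-invariance (Lemma \ref{lem:affinetransform}) and proximity (Lemma \ref{lem:pfclose}) to obtain
$$H(m_x,\mathcal{L}_N)=\frac{1}{b^n}\sum_{\mathbf{j}\in\varLambda^n}H(m_{\mathbf{j}(x)},\mathcal{L}_{N-\hat n})+O(n),$$
with $\hat n$ from \eqref{eqn:n'}. Integrating over $x\in[0,1)$ and noting that $\mathbf{j}(x)$ is Lebesgue-distributed on $[0,1)$ under $\nu^n\times\mathcal{m}$, one concludes that for $\nu^n$-most $\mathbf{j}$, $m_{\mathbf{j}(x)}$ satisfies $\frac{1}{N}H(m_{\mathbf{j}(x)},\mathcal{L}_N)\to\alpha$ for a.e.\ $x$. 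Since $S(\cdot,\mathbf{j})$ is Lipschitz in its first variable, Lemma \ref{lem:pfclose} transfers the entropy scaling at scales $\le n$ to $x=0$: for $\nu^n$-most $\mathbf{j}$ one finds $x_*$ close to $0$ with $m_{\mathbf{j}(x_*)}$ enjoying the $\alpha$-scaling, and $H(m_{\mathbf{j}(x_*)},\mathcal{L}_N)$ and $H(m_{\mathbf{j}(0)},\mathcal{L}_N)$ differ by only an additive constant for $N\le n$.

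For the entropy porosity itself, I would iterate \eqref{FundementalFormular} at a level $n'$ with $\gamma^{n'}\asymp b^{-i}$ to express each $\mathcal{L}_i$-component $(m_{\mathbf{i}(0)})_{y,i}$ as a mixture of affine images of various $m_{x'}$'s. For $\nu^{n'}$-most $x'$, $\frac{1}{m}H(m_{x'},\mathcal{L}_m)\le\alpha+\epsilon/2$, so by concavity and sub-additivity the mixture satisfies $\frac{1}{m}H((m_{\mathbf{i}(0)})_{y,i},\mathcal{L}_{i+m})\le\alpha+\epsilon/2+O(\log K/m)$, where $K$ counts the affine images whose support meets $D_i(y)$. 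A first-moment argument over $(y,i)$, supported by the identity
$$\frac{1}{k}\sum_{i=1}^{k}\frac{1}{m}H(\mu,\mathcal{L}_{i+m}\mid\mathcal{L}_i)=\frac{1}{km}\Bigl(\sum_{i=k+1}^{k+m}H(\mu,\mathcal{L}_i)-\sum_{i=1}^{m}H(\mu,\mathcal{L}_i)\Bigr),$$
whose right-hand side tends to $\alpha$ as $k\to\infty$ for $\mu=m_{\mathbf{i}(0)}$, then yields the required bound $\mathbb{P}_{1\le i\le k}(\tfrac{1}{m}H(\mu_{y,i},\mathcal{L}_{i+m})<\alpha+\epsilon)>1-\epsilon$.

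The main obstacle is uniformly bounding the number $K$ of pieces in the self-similar mixture across $(y,i)$: this amounts to counting how many of the $b^{n'}$ affine images in \eqref{FundementalFormular} contribute to a given $\mathcal{L}_i$-cell, which is controlled by transversality properties arising from condition (H). Aligning this with the quantifier chain $\epsilon\Rightarrow m\Rightarrow k\Rightarrow n$—taking $n\ge k+m$ so Lemma \ref{lem:pfclose} transfers across all relevant scales, $m$ large enough to absorb the $\log K/m$ loss, and $k$ large enough that the averaging identity's right-hand side is close to $\alpha$—constitutes the technical heart of the proof.
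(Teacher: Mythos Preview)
Your overall strategy---to obtain $\frac{1}{m}H((m_{\mathbf{i}(0)})_{y,i},\mathcal{L}_{i+m})<\alpha+\eps$ for most $(y,i)$ by combining the telescoping identity with a pointwise upper bound on component entropies---is sound in outline, and your propagation of the entropy scaling from Lebesgue-a.e.\ $x$ to $x=\mathbf{i}(0)$ is essentially the content of Lemma~\ref{lem:boundsinm}. However, the step you flag as ``the technical heart'' contains a genuine gap.

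You write the component $(m_{\mathbf{i}(0)})_{y,i}$ as a mixture of $K$ (restricted) affine images and bound its entropy by $m(\alpha+\eps/2)+O(\log K)$, proposing to control $K$ via ``transversality properties arising from condition (H)''. This fails on two counts. First, the transversality of Theorem~\ref{thmTransversality} concerns derivatives $S'(z,\cdot)$ and plays no role in this section; it enters only in Sect.~\ref{sec:partitionX}--\ref{sec:pfThmA} to build the partitions $\mathcal{L}_n^{\varLambda^{\#}}$. Second, no uniform bound on $K$ exists: when $\alpha<\log b/\log(1/\gamma)$ the typical number of level-$\hat i$ cylinders per occupied $\mathcal{L}_i$-cell is of order $b^{\hat i - i\alpha}$, so $\log K$ grows linearly in $i$ and cannot be absorbed by a fixed $m$. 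Averaging does not help either: the mean of the weight entropy $H(q)$ over cells is $\Theta(i)$, so your upper bound degenerates precisely where it is needed.

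The paper avoids counting pieces entirely. The relevant consequence of condition~(H) here is not transversality but \emph{non-atomicity} of each $m_x$ (Lemma~\ref{lemNonatomic}), from which one deduces \emph{joint uniform continuity across scales} for the whole family $\{m_x\}_{x\in[0,1]}$ (Proposition~\ref{prop:uc}). This ensures that when a cylinder $T^{n'}m_{\mathbf{j}(x)}$ meets an $\mathcal{L}_i$-cell, its normalized restriction still looks like a scaled copy of $m_{\mathbf{j}(x)}$ and hence carries entropy close to $m\alpha$; concavity then gives a \emph{lower} bound $\frac{1}{m}H(\mu_{y,i},\mathcal{L}_{i+m})\ge\alpha-\eps'$ for most components. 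Combined with your averaging identity (mean $\approx\alpha$) and Markov's inequality, this yields Lemma~\ref{lem:entporous3} and hence the theorem. Without uniform continuity the mean alone is insufficient, since nothing prevents a few components from having entropy far above $\alpha$ compensated by many below.
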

Recall that $\textbf{i}(0)=\frac{i_1+i_2b+\dots+i_nb^{n-1}}{b^n}$ for $\textbf{i}\in \varLambda^{n}$. We shall follow the strategy in \cite[Section 3]{barany2019hausdorff} to prove  Theorem \ref{thm:entporous} (See or \cite[Section 4]{ren2021dichotomy}). The only difficulty is to show $m_x$ is nonatomic.

\subsection{Nonatomic measure $m_x$} This subsection is devoted to prove the 
 measure $m_x$ is nonatomic under the condition (H), which will be used to  show  the uniform continuity across scales of the measures $m_x$ (See the next subsection for the definition).
\begin{lemma}\label{lemNonatomic}
	If the condition (H) holds, the measure $m_x$ has no atom for any $x\in [0,1].$
\end{lemma}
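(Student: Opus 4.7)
My plan is to argue by contradiction and exploit the self-similarity
$m_x=\frac1{b^n}\sum_{\textbf{w}\in\varLambda^n} f_{x,\textbf{w}}(m_{\textbf{w}(x)})$
from (\ref{FundementalFormular}) in order to first inflate any hypothetical atom into an honest Dirac mass at some parameter $x_\ast$, and then spread this Dirac-mass property across every $x\in[0,1]$, contradicting condition (H). So assume $m_{x_0}(\{y_0\})=p_0>0$ for some $x_0\in[0,1]$ and put $A:=S_{x_0}^{-1}(y_0)\subset\Sigma$, so that $\nu^{\mathbb{Z}_+}(A)=p_0>0$.

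The first step amplifies the atom by L\'evy's martingale convergence theorem applied to the filtration $\mathcal{F}_N=\sigma(j_1,\ldots,j_N)$ of cylinder $\sigma$-algebras on $\Sigma$: for $\nu^{\mathbb{Z}_+}$-a.e.\ $\textbf{j}\in A$, the conditional mass $b^{N}\nu^{\mathbb{Z}_+}(A\cap[\textbf{j}|_N])$ tends to $1$ as $N\to\infty$. Combined with the identity
$$b^{N}\nu^{\mathbb{Z}_+}(A\cap[\textbf{w}])=m_{\textbf{w}(x_0)}\bigl(\{\gamma^{-N}(y_0-S(x_0,\textbf{w}))\}\bigr),$$
which is an immediate consequence of (\ref{F}) and (\ref{FundementalFormular}), this yields for each $k\ge 1$ an integer $N_k$ and a word $\textbf{w}_k\in\varLambda^{N_k}$ such that $m_{\textbf{w}_k(x_0)}$ carries an atom of mass strictly greater than $1-1/k$, located at the point $y_k:=S(\textbf{w}_k(x_0),\sigma^{N_k}\textbf{j}_k)$ for some $\textbf{j}_k\in A$. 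The uniform bound $|y_k|\le\|\phi\|_\infty/(1-\gamma)$ coming from the definition of $S$ keeps $(y_k)$ in a compact set, so a subsequence satisfies $(\textbf{w}_k(x_0),y_k)\to(x_\ast,y_\ast)\in[0,1]\times\mathbb{R}$.

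Next I would identify $m_{x_\ast}=\delta_{y_\ast}$. The Lipschitz hypothesis on $\phi$ together with $\gamma<1$ makes the series defining $S(x,\textbf{j})$ converge uniformly in $\textbf{j}\in\Sigma$, so $x\mapsto S_x\in C(\Sigma)$ is continuous in the sup norm and hence $x\mapsto m_x$ is weakly continuous. Thus $m_{\textbf{w}_k(x_0)}\to m_{x_\ast}$ weakly, and for every open neighborhood $U$ of $y_\ast$, one has $y_k\in U$ eventually, so the Portmanteau theorem gives $m_{x_\ast}(U)\ge\liminf_k m_{\textbf{w}_k(x_0)}(U)\ge\liminf_k(1-1/k)=1$. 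Shrinking $U$ to $\{y_\ast\}$ forces $m_{x_\ast}=\delta_{y_\ast}$.

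The propagation step is now automatic. Because $\mathrm{supp}(\nu^{\mathbb{Z}_+})=\Sigma$ and $S_{x_\ast}$ is continuous, $m_{x_\ast}=\delta_{y_\ast}$ in fact means $S(x_\ast,\textbf{j})\equiv y_\ast$ on all of $\Sigma$; substituting into (\ref{F}) shows that $S(\textbf{w}(x_\ast),\cdot)$ is constant on $\Sigma$ for every $\textbf{w}\in\varLambda^{\#}$, so each $m_{\textbf{w}(x_\ast)}$ is itself a Dirac mass. The orbit $\{\textbf{w}(x_\ast):\textbf{w}\in\varLambda^{\#}\}$ has spacing $b^{-n}$ at level $n$ and is therefore dense in $[0,1]$; since weak limits of Dirac masses on $\mathbb{R}$ are again Dirac masses, the weak continuity of $x\mapsto m_x$ forces $m_x$ to be a Dirac mass for \emph{every} $x\in[0,1]$. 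In particular $S(x,\textbf{i})\equiv S(x,\textbf{j})$ for all $x\in[0,1]$ and $\textbf{i},\textbf{j}\in\Sigma$, which is exactly condition (H$^\ast$) and flatly contradicts the standing hypothesis (H). The main delicate point is the limit extraction in the second step, where one must simultaneously control the positions $y_k$ of the almost-Dirac atoms; the uniform bound $|y_k|\le\|\phi\|_\infty/(1-\gamma)$ is precisely what prevents the mass from escaping to infinity and lets the argument close.
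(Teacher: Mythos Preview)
Your argument is correct, but it follows a genuinely different route from the paper's. The paper never produces a full Dirac mass: instead it observes that the \emph{maximal} atom mass $\sup_{x,z}m_x(\{z\})$ is attained at some $(x_1,p)$ by weak compactness, and then the averaging identity (\ref{FundementalFormular}) forces every descendant $m_{\textbf{w}(x_1)}$ to carry an atom of that same maximal mass at $p_{\textbf{w}}=(p-S(x_1,\textbf{w}))/\gamma^{|\textbf{w}|}$. Uniform boundedness of these $p_{\textbf{w}}$ (from the uniform support bound) together with the identity $S(\textbf{w}(x_1),0_m)-S(\textbf{w}(x_1),1_m)=\gamma^m(p_{\textbf{w}1_m}-p_{\textbf{w}0_m})$ then gives $S(\cdot,0_\infty)\equiv S(\cdot,1_\infty)$ on the dense set $\{\textbf{w}(x_1)\}$, contradicting (H). Your approach replaces this maximal-atom trick by a L\'evy martingale/density-point argument that inflates the atom all the way to a Dirac mass $m_{x_\ast}=\delta_{y_\ast}$, and then propagates via (\ref{F}) and weak continuity to the full degenerate conclusion (H$^\ast$). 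The paper's route is more elementary (no martingale convergence, no Portmanteau) and stops as soon as a single pair $\textbf{i}\neq\textbf{j}$ violates (H); yours is a clean instance of the general ``atom $\Rightarrow$ Dirac somewhere'' paradigm and actually recovers the stronger (H$^\ast$). One small imprecision: when you write $y_k=S(\textbf{w}_k(x_0),\sigma^{N_k}\textbf{j}_k)$ for some $\textbf{j}_k\in A$, you implicitly need $\textbf{j}_k\in A\cap[\textbf{w}_k]$ for the formula to identify the atom location; this set has positive measure, so such $\textbf{j}_k$ exists, but it is worth stating.
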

\begin{proof}
	By the definition of the measure $m_x$, there exists a constant $L=L(\phi,b,\gamma)>0$ such that $m_x$ is support in $[-L,L]$ for all $x\in[0,1]$.
	If the lemma fails, 
	these exists $x_1\in [0,1]$ and $p\in\mathbb{R}$ such that 
	\begin{equation}\label{EqMaxActom}
	m_{x_1}(\{p\})=\max_{x\in [0,1],\,z\in\mathbb{R}}m_x(\{z\})>0
	\end{equation}
	by the compactness of the probability measures $m_x$ in the weak star topology and the continuity of function $S(x,\textbf{j})$. Also for any  $n\in\mathbb{Z}_+$, by (\ref{FundementalFormular}) we have
	\begin{equation}\label{eq:x_0}
	m_{x_1}(\{p\})=\frac1{b^n}\sum_{\textbf{w}\in\varLambda^n}T^nm_{\textbf{w}(x_1)}(\{p\}).
	\end{equation}
	Denote $$p_{\textbf{w}}=\frac{p-S(x_1,\textbf{w})}{\gamma^n},$$ thus $T^nm_{\textbf{w}(x_1)}(\{p\})=m_{\textbf{w}(x_1)}(\{p_{\textbf{w}}\})$.
	Combining this with  (\ref{EqMaxActom}) 
	and (\ref{eq:x_0})
	 we have
	 $$m_{\textbf{w}(x_0)}( \{p_{\textbf{w}}\})=	m_x(\{p\})>0\quad\quad\forall n\in\mathbb{Z}_+,\,\textbf{w}\in\varLambda^n.$$
	  This implies
	 \begin{equation}\label{EqActom}
	 M_0=\sup_{\textbf{w}\in\varLambda^{\#}}|P_{\textbf{w}}|<\infty
	 \end{equation}
	since the supports of the family of probability measures $\{m_x\}_{x\in[0,1]}$ have uniform bound in $\mathbb{R}$.
	
	For any $\textbf{w}\in\varLambda^{\#}$ and $m\in\mathbb{Z}_+$,
	 by the definition of $p_{\textbf{w}}$ we  have
	$$S(x_1,\textbf{w}0_m)-S(x_1,\textbf{w}1_m)=\gamma^{|\textbf{w}|+m}(p_{\textbf{w}1_m}-p_{\textbf{w}0_m})$$
	where $1_m=11\ldots1,0_m=00\ldots0\in\varLambda^m,$ which implies
	\begin{equation}\nonumber
	S(\textbf{w}(x_1),0_m)-S(\textbf{w}(x_1),1_m)=\gamma^m(
	p_{\textbf{w}1_m}-p_{\textbf{w}0_m})
	\end{equation}
	by (\ref{FM}).
	This and  (\ref{EqActom})  implies that
	$$|S(\textbf{w}(x_1),0_m)-S(\textbf{w}(x_1),1_m)|\le2\gamma^m M_0\quad\quad\forall m\in\mathbb{Z}_+.$$ So when m goes to infinity, we have
		$$S(\textbf{w}(x_1),0_{\infty})-S(\textbf{w}(x_1),1_{\infty})=0\quad\quad\forall \textbf{w}\in\varLambda^{\#}.$$
		Therefore 
		$S(x,0_{\infty})-S(x,1_{\infty})\equiv0$ since the set $\{\,\textbf{w}(x_1)\,\}_{\textbf{w}\in\varLambda^{\#}}$ is dense in $[0,1]$, which contradicts condition (H).
\end{proof}

\subsection{Uniform continuity across scales}
Following~\cite{barany2019hausdorff}, we call that a measure $\mu\in\mathscr{P}(\R)$ is {\em uniformly continuous across scales} if for any $\varepsilon>0$ there exists $\delta>0$ satisfied that for each $x\in\mathbb{R}$ and $r\in(0,1)$, we have
\begin{equation}\label{eqn:ucas}
\mu(B(x,\delta r))\le\varepsilon\mu(B(x, r)).
\end{equation}
A family $\mathcal{M}$ of measures in  $\mathscr{P}(\R)$ is called {\em jointly uniformly continuous across scales} if for each $\varepsilon>0$ there exists $\delta>0$ such that (\ref{eqn:ucas}) holds for every $\mu\in \mathcal{M}$, $x\in \R$ and any $r\in (0,1)$. The proof of the following Proposition \ref{prop:uc} is similar to \cite[proposition 4.1]{ren2021dichotomy}. For the readers' convenience we will give the details.
\begin{proposition}\label{prop:uc}
	If the condition (H) holds,
	the family of measures $\{m_x\}_{x\in [0,1]}$ is jointly uniformly continuous across scales.
\end{proposition}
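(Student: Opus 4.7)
The plan is to combine the non-atomicity from Lemma~\ref{lemNonatomic} with weak-$*$ compactness of $\{m_x\}_{x\in[0,1]}$ and the self-similar identity (\ref{FundementalFormular}) to promote a single-scale estimate into uniform continuity at every scale. Two preliminaries will be needed. First, $x\mapsto m_x$ is continuous in the weak-$*$ topology: for any bounded continuous $f\colon\R\to\R$, $\int f\,dm_x=\int f(S(x,\textbf{j}))\,d\nu^{\Z_+}(\textbf{j})$ is continuous in $x$ by dominated convergence, since $S(x,\textbf{j})$ is continuous in $x$ for each $\textbf{j}$ and uniformly bounded. Second, $\mathrm{supp}(m_x)=\{S(x,\textbf{j}):\textbf{j}\in\Sigma\}$ depends continuously on $x$ in the Hausdorff metric, because $S$ is uniformly continuous on the compact set $[0,1]\times\Sigma$. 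Consequently $\{m_x\}_{x\in[0,1]}$ is weak-$*$ compact with supports contained in a common interval $[-L,L]$.

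The first substantive step is single-scale uniform non-atomicity: $\eta(\delta):=\sup_{x\in[0,1],\,y\in\R}m_x(B(y,\delta))\to 0$ as $\delta\downarrow 0$. Otherwise I pick $\varepsilon_0>0$ and sequences $x_k\to x^*,y_k\to y^*,\delta_k\downarrow 0$ with $m_{x_k}(B(y_k,\delta_k))\ge\varepsilon_0$; Portmanteau applied to the shrinking closed balls $\overline{B(y_k,\delta_k)}\subset\overline{B(y^*,\rho)}$ (for any fixed $\rho>0$ and $k$ large, then $\rho\downarrow 0$), together with the weak-$*$ continuity above, forces $m_{x^*}(\{y^*\})\ge\varepsilon_0$, contradicting Lemma~\ref{lemNonatomic}.

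Next, the self-similar identity (\ref{FundementalFormular}) reduces general scales to a bounded range. Given $r\in(0,1)$, let $n\ge 1$ satisfy $\gamma^n\le r<\gamma^{n-1}$, so that $\rho:=r/\gamma^n\in(\gamma,1]$. Identity (\ref{FundementalFormular}) gives, for any $t>0$,
\[
m_x(B(y,t))=\frac{1}{b^n}\sum_{\textbf{j}\in\varLambda^n}m_{\textbf{j}(x)}\!\left(B\!\left(\frac{y-S(x,\textbf{j})}{\gamma^n},\frac{t}{\gamma^n}\right)\right),
\]
so a term-by-term comparison with $t=r$ and $t=\delta r$ reduces the proposition to the base case: the estimate for scales $\rho'\in(\gamma,1]$.

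The base case, which I expect to be the main difficulty, is again handled by contradiction. Failure yields $x_k,y_k,\rho_k\in(\gamma,1],\delta_k\downarrow 0$ with $m_{x_k}(B(y_k,\delta_k\rho_k))>\varepsilon_0\,c_k$, where $c_k:=m_{x_k}(B(y_k,\rho_k))$; single-scale non-atomicity forces $c_k\to 0$. After extracting $x_k\to x^*,y_k\to y^*,\rho_k\to\rho^*\in[\gamma,1]$, I split on $m_{x^*}(B(y^*,\rho^*))$. If positive, Portmanteau applied to $B(y^*,\rho^*-\varepsilon)\subset B(y_k,\rho_k)$ for small $\varepsilon$ and large $k$ gives $\liminf c_k\ge m_{x^*}(B(y^*,\rho^*))>0$, contradicting $c_k\to 0$. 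If zero, non-atomicity of $m_{x^*}$ upgrades this to $m_{x^*}(\overline{B(y^*,\rho^*)})=0$, giving $d(y^*,\mathrm{supp}(m_{x^*}))\ge\rho^*\ge\gamma$; by Hausdorff continuity of the supports this forces $d(y_k,\mathrm{supp}(m_{x_k}))\ge\gamma/2$ for large $k$, which is incompatible with $m_{x_k}(B(y_k,\delta_k\rho_k))>0$ once $\delta_k<\gamma/2$. The essential technical points are the non-atomic upgrade from open to closed ball (using Lemma~\ref{lemNonatomic}) and the Hausdorff continuity of supports (coming from the explicit parameterization $\textbf{j}\mapsto S(x,\textbf{j})$ on the compact symbolic space $\Sigma$).
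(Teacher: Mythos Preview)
Your proof is correct but follows a genuinely different route from the paper's. Both begin with the same single-scale uniform non-atomicity (your $\eta(\delta)\to 0$, the paper's (\ref{EqMeasureSmall})) and both exploit the self-similar identity (\ref{FundementalFormular}), but they deploy it differently.

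The paper argues constructively: given $\varepsilon$, it fixes $\delta'$ so that $m_x(B(y,\delta'))<\varepsilon$ uniformly, then chooses $\delta$ and, for each $r$, an integer $n_r$ so that (i) every piece $T^{n_r}m_{\textbf{w}(x)}$ has support of diameter $<r-\delta r$, and (ii) $\delta r/\gamma^{n_r}<\delta'$. Condition (i) forces any piece meeting $B(y,\delta r)$ to lie entirely inside $B(y,r)$, so the number of such pieces is at most $b^{n_r}m_x(B(y,r))$; condition (ii) makes each such piece contribute at most $\varepsilon/b^{n_r}$ to $m_x(B(y,\delta r))$. Multiplying gives the bound in one stroke. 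Your approach instead uses (\ref{FundementalFormular}) term-by-term to reduce to a fixed range of radii, and then handles that base case by a compactness/contradiction argument, splitting on whether the limit ball carries mass and invoking Hausdorff continuity of the supports $S_x(\Sigma)$ in the zero-mass case. This is sound, though non-constructive; the paper's counting trick is shorter and yields an explicit $\delta$, while your reduction isolates the role of compactness more transparently.

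One harmless slip: with $\gamma^n\le r<\gamma^{n-1}$ one gets $\rho=r/\gamma^n\in[1,1/\gamma)$, not $(\gamma,1]$; the rest of the argument goes through unchanged with the corrected range (indeed $\rho^*\ge 1$ only strengthens the distance estimate in your Case 2).
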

\begin{proof}
	 By Lemma \ref{lemNonatomic},
	for any $\varepsilon>0$ there exists $\delta'=\delta'(\varepsilon)>0$ such that for each $x\in [0,1]$ and  $y\in \mathbb{R}$, we have
	\begin{equation}\label{EqMeasureSmall}
	m_x\,\big({B}(y,\delta')\big)<\varepsilon
	\end{equation}
	since the family of probability measures $m_x$ is compact in the weak star topology. Let  $\delta>0$ be some constant such that the following holds. For any $r\in(0,1)$, there exists $n_r\in\mathbb{N}$ satisfying that
	\begin{equation}\label{Eqn_r1}
	r\delta+2\gamma^{n_r}\parallel\phi\parallel_{\infty}/(1-\gamma)<r
	\end{equation}
	and
	\begin{equation}\label{Eqn_r2}
	\delta r/{\gamma^{n_r}}<\delta'.
	\end{equation}
	Now we shall consider  $\textbf{w}\in\varLambda^{n_r}$ satisfying that
	 $$T^{n_r}m_{\textbf{w}(x)}(\boldsymbol{B}(y,\delta r))>0.$$
	 By the definition of $T^{n_r}m_{\textbf{w}(x)}$ there exists  $\textbf{j}\in\Sigma$ such that $ |S(x,\textbf{w}\textbf{j})-y|\le\delta r$, which implies 
	 $$	|S(x,\textbf{w}\textbf{i})-y|\le\delta r+2\gamma^{n_r}\parallel\phi\parallel_{\infty}/(1-\gamma)\quad\quad\forall\textbf{i}\in\Sigma.$$
	 Combining this with (\ref{Eqn_r1})
	 we have
	 \begin{equation}\label{eq:1}
	 T^{n_r}m_{\textbf{w}(x)}(\boldsymbol{B}(y, r))=1.
	 \end{equation}
	 Also  (\ref{EqMeasureSmall}) and (\ref{Eqn_r2}) implies
	 \begin{equation}\label{eq:number}
	 T^{n_r}m_{\textbf{w}(x)}(\boldsymbol{B}(y,\delta r))=m_{\textbf{w}(x)}\bigg(\boldsymbol{B}\big(\,\frac{y-S(x,\textbf{w})}{\gamma^{n_r}},\,\delta r/\gamma^{n_r}\big)\bigg)<\varepsilon.
	 \end{equation}
	 
	 Finally by (\ref{eq:1}) and (\ref{eq:number}),  we have
	 \begin{equation}\nonumber
	 \begin{aligned}
	 	m_{x}(\boldsymbol{B}(y,\delta r))&=\frac1{b^{n_r}}\sum_{\textbf{w}\in\varLambda^{n_r}}T^{n_r}m_{\textbf{w}(x)}(\boldsymbol{B}(y,\delta r))
	 	\\ &\le\frac{\eps}{b^{n_r}}\#\bigg\{\textbf{w}\in\varLambda^{n_r}:T^nm_{\textbf{w}(x)}(\boldsymbol{B}(y,\delta r))>0\bigg\}
	 	\\&\le \frac{\eps}{b^{n_r}}\sum_{\textbf{w}\in\varLambda^{n_r}}T^{n_r}m_{\textbf{w}(x)}(\boldsymbol{B}(y, r))
	 	\\&=\eps m_{x}(\boldsymbol{B}(y,r)).
	 \end{aligned}
	 \end{equation}
\end{proof}

\subsection{Entropy porosity of $m_x$ }
In this subsection we shall complete the proof of Theorem~\ref{thm:entporous}.
\begin{lemma}\label{lem:boundsinm}
	For any $\varepsilon>0, m\ge M_2(\varepsilon), n\ge N_2(\varepsilon,m)$,
	$$\inf\limits_{x\in [0,1]}\mathbb{\nu}^{n}\left(\left\{\textbf{i}\in \varLambda^{n}: \alpha-\varepsilon<\frac{1}{m}
	H(m_{\textbf{i}(x)}, \mathcal{L}_{m})<\alpha+\varepsilon\right\}\right) >1-\varepsilon.$$
\end{lemma}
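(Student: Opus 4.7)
The plan is to reduce Lemma \ref{lem:boundsinm} to two ingredients: convergence of $F_m(x):=\tfrac{1}{m}H(m_x,\mathcal{L}_m)$ to $\alpha$ in Lebesgue measure as $m\to\infty$, and an elementary equidistribution of the points $\{\textbf{i}(x):\textbf{i}\in\varLambda^n\}$ among the $b^n$ level-$n$ intervals in $\mathcal{L}_n\cap[0,1)$. The main obstacle will lie in the first ingredient, which requires continuity of $F_m$ on $[0,1]$ and in turn rests on the nonatomicity of $m_x$ supplied by Lemma \ref{lemNonatomic}. Concretely I will show that the closed ``bad'' set $B_{m,\eps}:=\{y\in[0,1]:|F_m(y)-\alpha|\ge\eps\}$ has small Lebesgue measure for large $m$, cover it by a union of level-$n$ intervals whose measure is only slightly larger, and observe that the $\nu^n$-fraction of $\textbf{i}$ with $\textbf{i}(x)$ in such a union equals the Lebesgue measure of the union, uniformly in $x$.

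For the convergence, Theorem \ref{TheoremB} together with Proposition \ref{prop:Young} gives $F_m(x)\to\alpha$ for Lebesgue-a.e. $x$, hence convergence in Lebesgue measure on $[0,1]$: for each $\eta>0$ there is $M_2(\eps,\eta)$ so that $\mathcal{m}(B_{m,\eps})<\eta$ for $m\ge M_2$. To guarantee that $B_{m,\eps}$ is closed I will first check that $F_m$ is continuous on $[0,1]$. The map $x\mapsto m_x$ is weak-$*$ continuous since $S(x,\textbf{j})$ is continuous in $x$ for every $\textbf{j}\in\Sigma$; and under condition (H) Lemma \ref{lemNonatomic} forces each $m_x$ to be atomless, so $m_x(\partial I)=0$ for every $b$-adic interval $I$, which by Portmanteau upgrades weak-$*$ continuity to $m_{x_n}(I)\to m_x(I)$ whenever $x_n\to x$. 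Because the supports $\{m_x\}_{x\in[0,1]}$ lie in a common compact interval, only finitely many intervals contribute to $H(m_x,\mathcal{L}_m)$, and each term is continuous in $x$, so $F_m$ is continuous and $B_{m,\eps}$ is closed.

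The final step rests on the following equidistribution observation: for every $x\in[0,1)$ the $b^n$ points $(x+k)/b^n$, $k=0,\ldots,b^n-1$, fall one into each element of $\mathcal{L}_n\cap[0,1)$. Hence for any union $A$ of level-$n$ intervals, $\nu^n\{\textbf{i}:\textbf{i}(x)\in A\}=\mathcal{m}(A)$, uniformly in $x$. Apply this with $A=C_n:=\bigcup\{I\in\mathcal{L}_n:I\cap B_{m,\eps}\ne\emptyset\}$; since $B_{m,\eps}$ is closed, $C_n\searrow B_{m,\eps}$ as $n\to\infty$, so $\mathcal{m}(C_n)\to\mathcal{m}(B_{m,\eps})<\eta$. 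Choosing $N_2(\eps,m)$ so that $\mathcal{m}(C_n)<2\eta$ for $n\ge N_2$, and taking $\eta=\eps/2$, yields $\nu^n\{\textbf{i}:\textbf{i}(x)\in B_{m,\eps}\}\le\mathcal{m}(C_n)<\eps$ uniformly in $x$, which is precisely the required estimate. Everything outside the continuity argument is elementary measure-theoretic bookkeeping.
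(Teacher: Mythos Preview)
Your proposal is correct and follows essentially the same approach as the paper. The paper's proof is a one-line reference---``consider $h_m(x)=\frac{1}{m}H(m_x,\mathcal{L}_m)$ and argue as in \cite[Lemma 4.2]{ren2021dichotomy}''---and your argument is precisely the natural unpacking of that reference: a.e.\ convergence of $h_m$ to $\alpha$ via Theorem~\ref{TheoremB} and Proposition~\ref{prop:Young}, continuity of $h_m$ from weak-$*$ continuity of $x\mapsto m_x$ together with the nonatomicity supplied by Lemma~\ref{lemNonatomic} (which is exactly why that lemma is placed immediately before), and the elementary equidistribution of $\{\textbf{i}(x)\}_{\textbf{i}\in\varLambda^n}$ among the level-$n$ $b$-adic intervals.
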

\begin{proof}
	We may consider $h_m(x)=\frac{1}{m}
	H(m_{x}, \mathcal{L}_{m})$ and the proof is the same as \cite[Lemma 4.2]{ren2021dichotomy}.
\end{proof}

\begin{lemma}\label{lem:entporous3}
	Under the condition (H), for any $\varepsilon>0$, there exists $\delta>0$ such that if 	
	$m\ge M_3(\varepsilon)$ and $k\ge K_3(\varepsilon,m)$ and if $\left|\frac{1}{k} H(m_x, \mathcal{L}_k)-\alpha\right|<\frac{\delta}{2}$, then $m_x$ is $(\alpha, \varepsilon, m)$-entropy porous from scale $1$ to $k$.
\end{lemma}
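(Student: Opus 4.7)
The plan is to relate the entropy porosity of $m_x$ at scales $i\in\{1,\dots,k\}$ to the global entropy rate $\frac{1}{k}H(m_x,\mathcal{L}_k)$, which is close to $\alpha$ by hypothesis. Write $\mu=m_x$ and $X_{i,y}=\frac{1}{m}H(\mu_{y,i},\mathcal{L}_{i+m})$. The target is $\mathbb{P}_{1\le i\le k}^{\mu}(X_{i,y}\ge\alpha+\varepsilon)<\varepsilon$.

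First, a telescoping computation. By the entropy chain rule, $\mathbb{E}^{\mu}[X_{i,\cdot}]=\frac{1}{m}H(\mu,\mathcal{L}_{i+m}\mid\mathcal{L}_i)=\frac{1}{m}\bigl(H(\mu,\mathcal{L}_{i+m})-H(\mu,\mathcal{L}_i)\bigr)$, and averaging over $i$ yields
\[
\mathbb{E}_{1\le i\le k}^{\mu}[X_{i,y}]=\frac{1}{km}\Bigl(\sum_{j=k+1}^{k+m}H(\mu,\mathcal{L}_j)-\sum_{j=1}^{m}H(\mu,\mathcal{L}_j)\Bigr).
\]
Since $0\le H(\mu,\mathcal{L}_{j+1})-H(\mu,\mathcal{L}_j)\le 1+O(1/j)$ and $\mu$ has bounded support, the first sum equals $mH(\mu,\mathcal{L}_k)+O(m^2)$ and the second is $O(m^2)$. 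Thus the average equals $\frac{1}{k}H(\mu,\mathcal{L}_k)+O(m/k)$, and choosing $k\ge K_3(\varepsilon,m)$ with $k\gg m/\delta$ places it in $(\alpha-\delta,\alpha+\delta)$.

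The second step upgrades this first-moment estimate to a tail bound. Since $X_{i,y}\le 1+O(1/m)$ and $\alpha$ can lie anywhere in $[0,1]$, a Markov bound on the upper tail alone is insufficient; I must establish a matching lower bound $X_{i,y}\ge\alpha-\eta$ on most $(i,y)$. This follows from exact dimensionality (Theorem \ref{TheoremB}): $-\frac{1}{n}\log_b\mu(\mathcal{L}_n(y))\to\alpha$ pointwise $\mu$-a.e., so by Egorov's theorem there exist $N$ and $E\subset\mathrm{supp}(\mu)$ with $\mu(E)>1-\eta'$ on which the convergence is uniform at scales $n\ge N$; the joint uniform continuity across scales from Proposition \ref{prop:uc} allows $N$ to be taken independent of $x$. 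For $y\in E$, $i\ge N$, and $z\in E\cap\mathcal{L}_i(y)$,
\[
-\log_b\mu_{y,i}(\mathcal{L}_{i+m}(z))=-\log_b\mu(\mathcal{L}_{i+m}(z))+\log_b\mu(\mathcal{L}_i(y))=m\alpha+O((i+m)\eta),
\]
and a Markov estimate on $y\mapsto\mu_{y,i}(E^c)$ (whose $\mu$-mean is $<\eta'$) yields $\mu_{y,i}(E^c)<\sqrt{\eta'}$ for $y$ in a set of $\mu$-measure $\ge 1-\sqrt{\eta'}$. Integrating with the trivial bound on the bad set gives $X_{i,y}=\alpha+O(\eta k/m)+O(\sqrt{\eta'})$ on a set of $\mathbb{P}_{1\le i\le k}$-measure at least $1-N/k-\sqrt{\eta'}$. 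Taking $\eta\lesssim\varepsilon m/k$, $\eta'\lesssim\varepsilon^2$, and $k\gtrsim N/\varepsilon$ yields entropy porosity with parameter $\varepsilon$, and a final choice of $\delta$ smaller than all the implicit tolerances completes the proof.

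The principal obstacle is the scale-dependent factor $O((i+m)\eta)$ in the Shannon-McMillan-Breiman approximation of the component entropies: as $i$ ranges up to $k$ this forces $\eta$ to shrink like $m/k$, and hence the Egorov threshold $N=N(\eta,\eta')$ to be large, turning the requirement $k\ge K_3(\varepsilon,m)$ into a fixed-point inequality $k\gtrsim N(\varepsilon m/k,\varepsilon^2)/\varepsilon$. The joint uniform continuity across scales from Proposition \ref{prop:uc}—itself a consequence of the nonatomicity of each $m_x$ (Lemma \ref{lemNonatomic}) under hypothesis (H)—is the decisive tool that keeps $N$ uniform in $x$, so that a single $K_3(\varepsilon,m)$ serves every $x$ satisfying the hypothesis.
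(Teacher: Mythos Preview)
Your first-moment telescoping is fine, but the second step—the lower bound $X_{i,y}\ge\alpha-\eta_0$ on most $(i,y)$ via Egorov on the pointwise Shannon--McMillan--Breiman convergence—does not close, and you have in fact identified the reason yourself without resolving it. The additive error in $-\log_b\mu_{y,i}(\mathcal{L}_{i+m}(z))-m\alpha$ is $O((i+m)\eta)$, not $O(m\eta)$; after dividing by $m$ and letting $i$ range up to $k$ the error is $O(k\eta/m)$. Forcing this below $\varepsilon$ requires $\eta\lesssim\varepsilon m/k$, so the Egorov threshold $N=N(\eta,\eta')$ depends on $k$, and the constraint $k\gtrsim N/\varepsilon$ becomes the fixed-point inequality $k\gtrsim N(\varepsilon m/k,\varepsilon^2)/\varepsilon$. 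Egorov's theorem supplies no rate whatsoever for $N(\eta)$ as $\eta\to 0$, so this inequality need not have any solution. Your closing appeal to Proposition~\ref{prop:uc} addresses a different issue---uniformity of the construction in $x$---and says nothing about the rate of Egorov convergence at a fixed $x$; the circularity is untouched.

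The route the paper takes (via \cite[Lemma~4.5]{ren2021dichotomy}) avoids pointwise exact-dimensionality entirely. One uses the recursion~(\ref{FundementalFormular}) to write $m_x=b^{-\hat{i}}\sum_{\textbf{j}\in\varLambda^{\hat{i}}}f_{x,\textbf{j}}(m_{\textbf{j}(x)})$, where each summand is supported on an interval of length $O(b^{-i})$. Proposition~\ref{prop:uc} then controls boundary effects so that, for most $(y,i)$, the component $(m_x)_{y,i}$ is up to $O(1/m)$ a convex combination of the affinely rescaled pieces $f_{x,\textbf{j}}(m_{\textbf{j}(x)})$; by Lemma~\ref{lem:affinetransform} each piece has scale-$(i+m)$ entropy equal to $H(m_{\textbf{j}(x)},\mathcal{L}_m)+O(1)$, and Lemma~\ref{lem:boundsinm} says these cluster near $m\alpha$ once $m$ and $\hat{i}$ are large. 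Concavity gives $X_{i,y}\ge\alpha-\varepsilon'-O(1/m)$ with constants \emph{independent of $i$}, so no parameter has to shrink with $k$ and $K_3(\varepsilon,m)$ exists outright. The idea you are missing is exactly this use of the dynamical self-similarity to transport the entropy estimate from scale $m$ to scale $i+m$ via an affine map, rather than comparing two large-scale local dimensions whose errors do not cancel.
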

\begin{proof}
	Combining Proposition \ref{prop:uc} and Lemma \ref{lem:boundsinm} with (\ref{FundementalFormular}),
	the proof is similar to \cite[Lemma 4.5]{ren2021dichotomy}.

\end{proof}

\begin{proof} [Proof of Theorem~\ref{thm:entporous}]
	Given $\eps>0$, let $\delta$, $M_3(\eps)$ and $K_3(\eps, m)$ be given by Lemma~\ref{lem:entporous3}. For this $\delta>0$, by  Lemma~\ref{lem:boundsinm},  when $k\ge M_2(\delta/2)$ and $n\ge N_2(\delta/2, k)$, 
	$$\nu^{n}\left(\left\{\textbf{i}\in \varLambda^{n}: \left|\frac{1}{k} H(m_{\textbf{i}(0)},\mathcal{L}_k)-\alpha \right|<\frac{\delta}{2}\right\}\right)>1-\delta.$$
	Therefore, when $m\ge M_3(\eps)$, $k\ge \max(K_3(\eps, m), K_2(\delta/2))$ and $n\ge N_2(\delta/2,k)$, the Theorem holds.
\end{proof}

\section{Transversality}\label{tra}
 In this section, under the condition (H) we deduce some quantified estimates for transversality,  which will be used to construct a sequence of partitions $\mathcal{L}_n^{\varLambda^{\#}}$ in Sect. \ref{sec:partitionX}. These partitions  allow us to construct the convolution structures and control the entropy of measures to prove Theorem A. 

 For any $\textbf{i},\,\textbf{j}\in\varLambda^{\#}\cup\Sigma$ and every
 integer $1\le k\le|\textbf{j}|$, let $\textbf{j}_{k}=j_1j_2\ldots j_k$ as usual. 
 We  denote $\textbf{i}<\textbf{j}$  if $\textbf{i}=\textbf{j}_{|\textbf{i}|}$ holds.
 When $|\textbf{j}|<\infty$, let $I_{\textbf{j}}$ be an interval in $[0,1]$ such that $$I_{\textbf{j}}=\bigg[\frac{i_1+i_2b+\dots+i_nb^{n-1}}{b^n},\frac{1+i_1+i_2b+\dots+i_nb^{n-1}}{b^n}\bigg).$$
  The main result of this section is the following Theorem \ref{thmTransversality}.
\begin{theorem}\label{thmTransversality}
	Fix an integer $b\ge2$ and $\gamma\in(0,1)$. Assume $\phi(x)$ is a real analytic $\mathbb{Z}$-periodic function such that $\alpha<\min\{1,\frac{\log b}{\log1/\gamma}\}$ and the condition (H) holds.
	For any $t_0>0$, there exists an integer $t>t_0$, real number $\varDelta_1>0$ and  $\textbf{h},\,\textbf{h}',\,\textbf{a}\in\varLambda^t$ with  the following  property.
	
	For every $z\in I_{\textbf{a}}$ and $\textbf{i},\,\textbf{j}\in \varLambda^{\#}$, if $\textbf{h}<\textbf{i}$, 	$\textbf{h}'<\textbf{j}$, then
	\begin{enumerate}
		\item[(A.1)] $|S'(z,\,\textbf{i})|\,,\,|S'(z,\,\textbf{j})|>\varDelta_1;$
		\item[(A.2)] $|S'(z,\,\textbf{i})-S'(z,\,\textbf{j})|>\varDelta_1.$
		
	\end{enumerate}
\end{theorem}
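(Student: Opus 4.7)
My plan is to decouple the theorem into an analytic existence statement at the symbolic level and a quantitative truncation. Differentiating the cocycle identity (\ref{F}) in $x$ yields
\[
S'(x,\mathbf{w}\mathbf{i}) = S'(x,\mathbf{w}) + (\gamma/b)^{|\mathbf{w}|}\,S'(\mathbf{w}(x),\mathbf{i}),
\]
and since $|S'(\cdot,\cdot)|$ is uniformly bounded by some constant $C=C(\phi,\gamma,b)$, both $|S'(z,\mathbf{i})-S'(z,\mathbf{h})|$ and $|S'(z,\mathbf{j})-S'(z,\mathbf{h}')|$ are at most $C(\gamma/b)^t$ whenever $\mathbf{h}<\mathbf{i}$ and $\mathbf{h}'<\mathbf{j}$ have length $t$. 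Consequently (A.1)--(A.2) will follow once I produce, for arbitrarily large $t$, words $\mathbf{h},\mathbf{h}',\mathbf{a}\in\Lambda^t$ and an absolute constant $\delta>0$ with $\inf_{z\in I_\mathbf{a}}|S'(z,\mathbf{h})|>\delta$ and $\inf_{z\in I_\mathbf{a}}|S'(z,\mathbf{h})-S'(z,\mathbf{h}')|>\delta$; choosing $t$ so large that $C(\gamma/b)^t<\delta/4$ then yields $\Delta_1=\delta/2$.

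To produce such $\mathbf{h},\mathbf{h}'$ I pass to the symbolic limit. For $\mathbf{i}\in\Sigma$ set $\Psi_\mathbf{i}(x):=S'(x,\mathbf{i})$; each $\Psi_\mathbf{i}$ is real analytic on $\mathbb{R}$, since the defining series converges locally uniformly and $\phi$ is real analytic. The target is to exhibit $\mathbf{h}_\infty,\mathbf{h}'_\infty\in\Sigma$ such that neither of the analytic functions $\Psi_{\mathbf{h}_\infty}$ and $\Psi_{\mathbf{h}_\infty}-\Psi_{\mathbf{h}'_\infty}$ is identically zero. By analyticity their common zero set is discrete, so there is an open interval $J\subset[0,1]$ on which both are bounded below by some $\delta>0$; truncating $\mathbf{h}_\infty,\mathbf{h}'_\infty$ to length $t$, with $t$ taken large enough that $b^{-t}<|J|/2$ and all tail errors of size $O((\gamma/b)^t)$ are absorbed into $\delta/4$, and picking any $\mathbf{a}\in\Lambda^t$ with $I_\mathbf{a}\subset J$, finishes the construction. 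Nonvanishing of $\Psi_{\mathbf{0}_\infty}$ is easy: its Taylor coefficients at $x=0$ are positive multiples of $\phi^{(k+1)}(0)$, which cannot all vanish because $\phi$ is non-constant analytic.

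The main obstacle is producing $\mathbf{h}_\infty\neq\mathbf{h}'_\infty$ with $\Psi_{\mathbf{h}_\infty}\not\equiv\Psi_{\mathbf{h}'_\infty}$. Suppose toward a contradiction that $\Psi_\mathbf{i}\equiv\Psi_\mathbf{j}$ for all $\mathbf{i},\mathbf{j}\in\Sigma$; then $S(x,\mathbf{i})=W(x)+c_\mathbf{i}$ for $W(x):=S(x,\mathbf{0}_\infty)$, and (\ref{F}) forces $c_{w\mathbf{i}}=a_w+\gamma c_\mathbf{i}$ with $a_w$ depending only on $w\in\Lambda$. Condition (H) makes $\mathbf{i}\mapsto c_\mathbf{i}$ injective, hence the $a_w$'s are distinct, so $m_x$ is almost surely a translate of the homogeneous self-similar measure $\mu_0$ driven by the $b$ similarities $y\mapsto a_w+\gamma y$ with uniform weights, and $\alpha=\dim\mu_0$. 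Upgrading the injective encoding (coming from (H)) to the saturated dimension $\dim\mu_0=\min\{1,\log b/\log(1/\gamma)\}$---either by invoking the dichotomy for homogeneous self-similar measures in \cite{gao2022} or by running the entropy-growth machinery of the present paper in the simpler one-dimensional setting---contradicts the standing hypothesis $\alpha<\min\{1,\log b/\log(1/\gamma)\}$. This rigidity step, which promotes the purely qualitative injectivity under an analytic encoding to a quantitative saturation of the similarity dimension, is the technical heart of the argument; the remaining steps (analytic continuation, choice of $J$, truncation, and selection of $\mathbf{a}$) are routine.
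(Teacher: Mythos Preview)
Your reduction to the symbolic level (differentiate the cocycle, pick an interval $J$, truncate, absorb $O((\gamma/b)^t)$ tails) is correct and matches the paper's framework. The divergence is entirely in the ``main obstacle'': producing $\mathbf{h}_\infty\neq\mathbf{h}'_\infty$ with $\Psi_{\mathbf{h}_\infty}\not\equiv\Psi_{\mathbf{h}'_\infty}$.

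The paper does this in three lines and never uses the hypothesis $\alpha<\min\{1,\log b/\log(1/\gamma)\}$. Set $f(x)=S(x,\mathbf{0}_\infty)$ and observe from the definition of $S$ that $f(x+1)=S(x,\mathbf{1}\,\mathbf{0}_\infty)$. If $\Psi_{\mathbf{0}_\infty}\equiv\Psi_{\mathbf{1}\,\mathbf{0}_\infty}$ then $f'(x)\equiv f'(x+1)$, so $f(x+1)-f(x)$ is constant; since $f$ is bounded on $\mathbb{R}$ this constant is $0$, i.e.\ $S(\cdot,\mathbf{0}_\infty)\equiv S(\cdot,\mathbf{1}\,\mathbf{0}_\infty)$, contradicting (H). That is the whole argument (Claim~\ref{claimS1} in the paper). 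From there the paper localises: Lemma~\ref{lemNeqZero} gives a point and a word with nonzero derivative, and Lemma~\ref{lemMinusNeqZero} uses analyticity plus (\ref{FM}) to transplant the nonvanishing of $\Psi_{\mathbf{i}}-\Psi_{\mathbf{j}}$ into any prescribed subinterval $I_{\mathbf{v}}$ with any prescribed prefix $\mathbf{w}$.

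Your route, by contrast, has a genuine gap at the step you yourself flag as the technical heart. In the degenerate scenario $S(x,\mathbf{i})=W(x)+c_{\mathbf{i}}$ you correctly identify $m_x$ with a homogeneous self-similar measure $\mu_0$, and condition (H) gives that the coding $\mathbf{i}\mapsto c_{\mathbf{i}}$ is injective, i.e.\ \emph{no exact overlaps}. But no-exact-overlaps does \emph{not} imply $\dim\mu_0=\min\{1,\log b/\log(1/\gamma)\}$: this is precisely the outstanding problem for Bernoulli convolutions and their relatives, and Hochman's theorem needs exponential separation, not mere injectivity. The reference \cite{gao2022} concerns the (H)/(H$^*$) dichotomy, not dimensions of self-similar measures, so it does not help here. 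And ``running the entropy-growth machinery of the present paper'' is circular: that machinery (Sections~\ref{sec:partitionX}--\ref{sec:pfThmA}) is built on Theorem~\ref{thmTransversality} itself.

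There is also a smaller omission: for (A.1) you must bound $|\Psi_{\mathbf{h}'_\infty}|$ from below on $J$, not just $|\Psi_{\mathbf{h}_\infty}|$ and $|\Psi_{\mathbf{h}_\infty}-\Psi_{\mathbf{h}'_\infty}|$, and your construction does not ensure $\Psi_{\mathbf{h}'_\infty}\not\equiv 0$. The paper sidesteps this neatly by first fixing a prefix $\mathbf{w}$ and an interval $I_{\mathbf{y}}$ on which $|S'(x,\mathbf{w}\mathbf{t})|>\Delta$ for \emph{all} extensions $\mathbf{t}$ (continuity from Lemma~\ref{lemNeqZero}), and only then choosing $\mathbf{h}=\mathbf{w}\mathbf{i}_{t-|\mathbf{w}|}$, $\mathbf{h}'=\mathbf{w}\mathbf{j}_{t-|\mathbf{w}|}$ inside that family via Lemma~\ref{lemMinusNeqZero}; both inherit the lower bound automatically.
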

In fact, (A.2) implies that $\textbf{h}\neq\textbf{h}'$ in Theorem \ref{thmTransversality}.
\begin{lemma}\label{lemNeqZero}
	Under the assumption of Theorem \ref{thmTransversality},
	there exists $x_2\in [0,1]$ and $\textbf{u}\in \Sigma$ such that
	$$S'(x_2,\textbf{u})\neq0.$$
\end{lemma}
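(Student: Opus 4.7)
The plan is to specialize to the constant sequence $\textbf{u}=\textbf{0}_{\infty}:=000\cdots\in\Sigma$ and show that $x\mapsto S'(x,\textbf{0}_{\infty})$ is a nonzero real analytic function on $[0,1]$; any point $x_2$ at which it does not vanish then furnishes the desired pair.

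Since $\phi$ is real analytic and $\mathbb{Z}$-periodic, $\phi'$ extends to a bounded holomorphic function on some horizontal strip of $\mathbb{C}$. From the definition~(\ref{S}), $S(x,\textbf{0}_{\infty})=\sum_{n\ge 1}\gamma^{n-1}\phi(x/b^{n})$; because $\gamma/b<1$, both this series and its term-by-term derivative converge uniformly on compact subsets of that strip, so
\begin{equation*}
S'(x,\textbf{0}_{\infty})=\sum_{n=1}^{\infty}\frac{\gamma^{n-1}}{b^{n}}\,\phi'\!\left(\frac{x}{b^{n}}\right)
\end{equation*}
is real analytic in $x$. Assuming for contradiction that $S'(\cdot,\textbf{0}_{\infty})\equiv 0$, I would substitute the Taylor expansion $\phi'(y)=\sum_{k\ge 0}\phi^{(k+1)}(0)y^{k}/k!$ and interchange the two summations (valid for $|x|$ smaller than the strip's half-width, by Cauchy's inequality combined with the geometric factor in $n$) to obtain
\begin{equation*}
S'(x,\textbf{0}_{\infty})=\sum_{k=0}^{\infty}\frac{\phi^{(k+1)}(0)}{k!\,b^{k+1}\bigl(1-\gamma/b^{k+1}\bigr)}\,x^{k}.
\end{equation*}

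Each factor $b^{k+1}\bigl(1-\gamma/b^{k+1}\bigr)$ is a strictly positive real number (as $b\ge 2$ and $\gamma<1$), so identical vanishing of this power series would force $\phi^{(k+1)}(0)=0$ for every $k\ge 0$. Analyticity of $\phi$ then gives $\phi'\equiv 0$, so $\phi$ is constant; but then $S(x,\textbf{i})\equiv S(x,\textbf{j})$ for all $\textbf{i},\textbf{j}\in\Sigma$, contradicting condition~(H). The only step requiring genuine care is the Fubini-type interchange of the $n$- and $k$-summations, which I expect to be the main (and essentially only) technical obstacle; it follows quickly from Cauchy's inequalities on a small disk contained in the analyticity strip of $\phi$, and the remainder of the argument is bookkeeping.
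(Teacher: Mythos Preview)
Your proof is correct but follows a different route from the paper's. The paper instead proves a slightly stronger auxiliary claim (labeled Claim~\ref{claimS1}): there exist $\textbf{i},\textbf{j}\in\Sigma$ with $i_1\neq j_1$ and $x_2\in[0,1]$ such that $S'(x_2,\textbf{i})-S'(x_2,\textbf{j})\neq 0$. To prove it, the paper extends $f(x)=S(x,\textbf{0}_\infty)=\sum_{k\ge1}\gamma^{k-1}\phi(x/b^k)$ to all of $\mathbb{R}$, observes that $f(x+1)=S(x,\textbf{1}\,\textbf{0}_\infty)$, and argues that if the claim failed then $f'(x)-f'(x+1)\equiv 0$ on $\mathbb{R}$ by analyticity; since $f$ is globally bounded, the constant $f(x)-f(x+1)$ must be zero, giving $S(\cdot,\textbf{0}_\infty)\equiv S(\cdot,\textbf{1}\,\textbf{0}_\infty)$ and contradicting~(H) directly. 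Your approach, by contrast, computes the Taylor coefficients of $S'(\cdot,\textbf{0}_\infty)$ at $0$ and deduces that identical vanishing forces $\phi'\equiv 0$, hence $\phi$ constant. Your argument is more computational (and needs the Fubini step you flagged), while the paper's boundedness trick avoids any series manipulation. One practical advantage of the paper's route is that Claim~\ref{claimS1} is invoked again in the proof of Lemma~\ref{lemMinusNeqZero}, so their argument does double duty; with your approach, that lemma would still need a separate argument producing a nonvanishing \emph{difference} of derivatives.
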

\begin{proof} We only need to prove the following claim.
	\begin{claim}\label{claimS1}
		There exists $\textbf{i}\,,\,\textbf{j}\in \Sigma$ and $x_2\in [0,1]$ such that $i_1\neq j_1$ and
		$$S'(x_2,\textbf{i})-S'(x_2,\textbf{j})\neq0.$$
	\end{claim}
	Let us consider the function $f:\mathbb{R}\to\mathbb{R}$ such that
		$$f(x)=\sum_{k=1}^{\infty}\gamma^{k-1}\phi\big(\frac{x}{b^k}\big)\quad\quad\forall x\in\mathbb{R}.$$
		Thus $f(x)=S(x,\textbf{0}_{\infty})$ and $f(x+1)=S(x,\textbf{1}\,\textbf{0}_{\infty})$ for each $x\in[0,1]$ where 
		$\textbf{0}_{\infty}=000\ldots\in\Sigma$ and $\textbf{1}=1\in\varLambda.$
	
	If the claim fails, then we have $f'(x)-f'(x+1)=0$ for each $x\in\mathbb{R}$ since $f'(x)-f'(x+1)$ is a real analytic function in $\mathbb{R}$. Combining this with
	$\sup_{x\in\mathbb{R}}|f(x)|<\infty$ we have $f(x)-f(x+1)=0$ for all $x\in\mathbb{R}$,
	which contracts the condition (H). Thus the claim holds.
\end{proof}
\begin{lemma}\label{lemMinusNeqZero}
	Under the assumption of Theorem \ref{thmTransversality},
	for any $\textbf{v}\,,\textbf{w}\in\varLambda^{\#}$ there exists
	$\textbf{i}\neq\textbf{j}\in\Sigma$ and
	$x_{\textbf{v},\textbf{w}}\in I_{\textbf{v}}$ such that
	$$S'(x_{\textbf{v},\textbf{w}},\textbf{w}\textbf{i})-S'(x_{\textbf{v},\textbf{w}},\textbf{w}\textbf{j})\neq0.$$
\end{lemma}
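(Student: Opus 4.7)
The plan is to reduce the assertion to a claim about a single open subinterval of $[0,1]$ via the cocycle identity (\ref{FM}), and then to invoke real analyticity together with Claim \ref{claimS1} from the proof of Lemma \ref{lemNeqZero}.

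First I would apply formula (\ref{FM}) with $k=1$ to obtain, for every $x\in[0,1]$,
$$S'(x,\textbf{w}\textbf{i})-S'(x,\textbf{w}\textbf{j})=\left(\frac{\gamma}{b}\right)^{|\textbf{w}|}\bigl(S'(\textbf{w}(x),\textbf{i})-S'(\textbf{w}(x),\textbf{j})\bigr).$$
The affine map $x\mapsto\textbf{w}(x)$ has positive slope $b^{-|\textbf{w}|}$ and sends $I_{\textbf{v}}$ onto an open subinterval $J\subset[0,1]$ of length $b^{-|\textbf{v}|-|\textbf{w}|}>0$. Thus the lemma reduces to producing $\textbf{i}\neq\textbf{j}\in\Sigma$ and some $y\in J$ such that $S'(y,\textbf{i})\neq S'(y,\textbf{j})$.

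Next I would use that, for every $\textbf{i}\in\Sigma$, the function $y\mapsto S(y,\textbf{i})$ is real analytic on $\mathbb{R}$. Indeed the defining series in (\ref{S}) and its formal term-by-term derivatives of all orders converge absolutely and uniformly on compact sets, because $0<\gamma<1$ and $\phi$, being real analytic and $\mathbb{Z}$-periodic, has every derivative bounded on $\mathbb{R}$. It follows that, for any pair $\textbf{i},\textbf{j}\in\Sigma$, the function $y\mapsto S'(y,\textbf{i})-S'(y,\textbf{j})$ is real analytic on $\mathbb{R}$.

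Finally I would argue by contradiction. If the conclusion of the lemma failed for the given $\textbf{v},\textbf{w}$, then for every $\textbf{i}\neq\textbf{j}\in\Sigma$ the real analytic function $y\mapsto S'(y,\textbf{i})-S'(y,\textbf{j})$ would vanish identically on the open interval $J$, and hence on all of $\mathbb{R}$ by the identity theorem for real analytic functions. But Claim \ref{claimS1} in the proof of Lemma \ref{lemNeqZero} supplies $\textbf{i},\textbf{j}\in\Sigma$ with $i_1\neq j_1$ and a point $x_2\in[0,1]$ at which $S'(x_2,\textbf{i})\neq S'(x_2,\textbf{j})$, contradicting identical vanishing. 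I do not foresee a real obstacle: the argument is a routine deployment of the cocycle identity (\ref{FM}), global real analyticity of $S(\cdot,\textbf{i})$, and the already-established Claim \ref{claimS1}.
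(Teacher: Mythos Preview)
Your proof is correct and follows essentially the same route as the paper: argue by contradiction, use the cocycle identity (\ref{FM}) to transfer the vanishing of $S'(\cdot,\textbf{w}\textbf{i})-S'(\cdot,\textbf{w}\textbf{j})$ on $I_{\textbf{v}}$ to vanishing of $S'(\cdot,\textbf{i})-S'(\cdot,\textbf{j})$ on the subinterval $\textbf{w}(I_{\textbf{v}})=I_{\textbf{v}\textbf{w}}$, extend by real analyticity to all of $[0,1]$, and contradict Claim~\ref{claimS1}. The only cosmetic difference is that you spell out the analyticity of $S(\cdot,\textbf{i})$ more carefully and name the image interval $J$ rather than identifying it as $I_{\textbf{v}\textbf{w}}$.
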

\begin{proof} If the lemma fails, then there exists  $\textbf{v}\,,\textbf{w}\in\varLambda^{\#}$ such that the following holds.
	For any $\textbf{i},\textbf{j}\in\Sigma$ s.t. $i_1\neq j_1$, we have
	$$S'(x,\textbf{w}\textbf{i})-S'(x,\textbf{w}\textbf{j})=0\quad\forall x\in I_{\textbf{v}}.$$
	Thus, by (\ref{FM}) we have
	$$S'(x,\textbf{i})-S'(x,\textbf{j})=0\quad\forall x\in I_{\textbf{v}\textbf{w}}.$$
	Since $S'(x,\textbf{i})-S'(x,\textbf{j})$ is a real analytic function, we have 
	$$S'(x,\textbf{i})-S'(x,\textbf{j})=0\quad\forall x\in [0,1].$$
	This  contradicts the claim \ref{claimS1} and the Lemma holds.
\end{proof}


\begin{proof}[The proof of Theorem \ref{thmTransversality}]
	By Lemma \ref{lemNeqZero} and $\parallel\phi'\parallel_{\infty}<\infty$, there exists $\varDelta>0$ and $\textbf{y},\textbf{w}\in\varLambda^{\#}$ such that
	\begin{equation}\label{EqSLowBound1}
	|S'(x,\textbf{w}\textbf{t})|>\varDelta
	\end{equation}
	for any $x\in I_{\textbf{y}}$ and every $\textbf{t}\in\varLambda^{\#}\cup\Sigma.$
	Also there exists  $\varDelta'>0$  and $\textbf{y}',\,\textbf{i}\neq\textbf{j}\in\Sigma$ such that the following holds by Lemma \ref{lemMinusNeqZero} and $\parallel\phi'\parallel<\infty$
	\begin{enumerate}
		\item[(1)] $I_{\textbf{y}'}\subset I_{\textbf{y}};$
		\item[(2)] for any  $x\in I_{\textbf{y}'}$ we have $|S'(x,\textbf{w}\textbf{i})-S'(x,\textbf{w}\textbf{j})|\ge\varDelta'.$
	\end{enumerate}
	Choose an integer $t\ge t_0+|\textbf{w}|+|\textbf{y}'|$  such that
	\begin{equation}\label{EqSLowBound0}
	|S'(x,\textbf{w}\textbf{i}_{t-|\textbf{w}|})-S'(x,\textbf{w}\textbf{j}_{t-|\textbf{w}|})|\ge\varDelta'/2\quad \forall\,x\in I_{\textbf{y}'}
	\end{equation}
	and 
	\begin{equation}\label{EqSLowBound2}
	\frac{2(\gamma/b)^{t}\parallel\phi'\parallel_{\infty}}{1-\gamma/b}<\varDelta'/4.
	\end{equation}
	Let $\textbf{y}''\in\varLambda^t$ s.t. $I_{\textbf{y}''}\subset I_{\textbf{y}'}$
	and $\varDelta_1=\min\{\varDelta,\,\varDelta'/4\}$.
	Thus $\textbf{h}=\textbf{w}\textbf{i}_{t-|\textbf{w}|},\,\textbf{h}'=\textbf{w}\textbf{j}_{t-|\textbf{w}|}$ and $\textbf{a}=\textbf{y}''$ are  what we need by (\ref{EqSLowBound1}), (\ref{EqSLowBound0}) and (\ref{EqSLowBound2}). 
\end{proof}

\section{The partitions  of the space $\varLambda^{\#}$}\label{sec:partitionX}
 In this section, we construct a  sequence of partitions $\mathcal{L}_n^{\varLambda^{\#}}$ of the space $\varLambda^{\#}$. The method is similar to \cite[Section 6]{ren2021dichotomy}. Also we  give some useful properties about these partitions with  Theorem \ref{thmTransversality}. 
 
 In the rest of paper, we fix an integer $b\ge2$ and $\gamma\in(0,1).$ Also suppose that 
 $\phi(x)$ is a real analytic $\mathbb{Z}$-periodic function such that the condition (H) holds.
 Combining Theorem \ref{TheoremB} with
Theorem \ref{thmSeperation} and Theorem \ref{thmTransversality},
there exists an integer $t>0$, some constants $\varDelta_1,\, C>0$, a point $x_0\in [0,1)$, set $M\subset\mathbb{Z}_+$ and $\textbf{a},\,\textbf{h},\,\textbf{h}'\in\varLambda^t$  with the following properties.
\begin{enumerate}
	\item[(B.1)] For each  $\textbf{w}\in\varLambda^{t}$, the sequence $(X_n^{\textbf{w},\,x_0}\,)_{n\in\mathbb{Z}_+}$
	is $(\gamma^{C/2}, M)$-exponential separation
	where $X_n^{\textbf{w},\,x_0}$ is from Theorem \ref{thmSeperation};
	\item[(B.2)] $dim(m_{x_0})=\alpha$;
	\item[(B.3)]For every $z\in I_{\textbf{a}}$ and $\textbf{i},\,\textbf{j}\in \varLambda^{\#}$, if $\textbf{h}<\textbf{i}$, 	$\textbf{h}'<\textbf{j}$ then (A.1), (A.2) hold;
		\item[(B.4)] $\# M=\infty.$
\end{enumerate}
 In the rest of paper, we shall fix such elements $\{t,\,x_0,\,C,\, \varDelta_1,\, M,\,\textbf{a},\,\textbf{h},\textbf{h}' \}$.
 Let  $\overline{\pi}:\varLambda^{\#}\to\mathbb{N}\times\mathbb{R}^{3}$ be the map such that
$$\textbf{w}\mapsto\bigg(|\textbf{w}|,\,S(\textbf{w}(x_0),\textbf{h}),\,S(\textbf{w}(x_0),\textbf{h}'),\,S(x_0,\textbf{w})\bigg).$$
\begin{definition} For any integer $n\ge 1$, let 
	$\mathcal{L}_n^{\varLambda^{\#}}$ be the union of all the non-empty subsets of $\varLambda^{\#}$ of the following form
	$$\overline{\pi}^{-1}\left(\{m\}\times I_1 \times I_2  \times J\right),$$
	where $m\in \mathbb{N}, \,I_1,\,I_2\in \mathcal{L}_n,\, J\in \mathcal{L}_{n+[m\log_b 1/\gamma]}.$
	The partition $\mathcal{L}_0^{\varLambda^{\#}}$ consists of non-empty subsets of $\varLambda^{\#}$ of the following form
	$$\overline{\pi}^{-1}\left(\{m\}\times\mathbb{R}\times\mathbb{R}\times J\right),$$
	where $m\in \mathbb{N}, J\in \mathcal{L}_{[m\log_b 1/\gamma]}.$
\end{definition}
Note that $\textbf{w}(x_0)$ is defined by (\ref{eq:text}).
\begin{lemma}\label{lem:constantR}
There exists $R>0$ such that the following holds.

Let $n,\,i$ be positive integers such that $\hat{n}>t$.  For each $\textbf{u}, \textbf{v}\in \varLambda^{\hat{n}-t}$, if $\textbf{u}\textbf{a}$ and $\textbf{v}\textbf{a}$ belong to the same element of $\mathcal{L}_i^{\varLambda^{\#}}$, then for  any $\textbf{q}\in\varLambda^{\hat{i}}$  and $\textbf{j},\,\textbf{i}\in \Sigma$, we have
  \begin{equation}\label{EqSupport}
  |S(x_0,\textbf{u}\textbf{a}\textbf{q}\,\textbf{j})-S(x_0,\textbf{v}\textbf{a}\textbf{q}\,\textbf{i})|\le R b^{-(n+i)}.
  \end{equation}
\end{lemma}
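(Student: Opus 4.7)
\emph{Plan.} The approach is to use the multiplicative identity (\ref{F}) to split the target difference into a ``prefix'' part and a ``suffix'' part, then control each by the information encoded in $\mathcal{L}_i^{\varLambda^{\#}}$ together with the transversality (A.1). Setting $x_1=\textbf{u}\textbf{a}(x_0)$ and $x_2=\textbf{v}\textbf{a}(x_0)$, applying (\ref{F}) at the cut after $\textbf{u}\textbf{a}$ and $\textbf{v}\textbf{a}$ gives
\begin{equation*}
S(x_0,\textbf{u}\textbf{a}\textbf{q}\textbf{j})-S(x_0,\textbf{v}\textbf{a}\textbf{q}\textbf{i})=[S(x_0,\textbf{u}\textbf{a})-S(x_0,\textbf{v}\textbf{a})]+\gamma^{\hat n}[S(x_1,\textbf{q}\textbf{j})-S(x_2,\textbf{q}\textbf{i})].
\end{equation*}
The first bracket is what the $J$-coordinate of the partition $\mathcal{L}_i^{\varLambda^{\#}}$ controls directly: it is bounded by $b^{-(i+[\hat n\log_b 1/\gamma])}$, which by the definition (\ref{eqn:n'}) of $\hat n$ is $O(b^{-(n+i)})$. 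Writing out (\ref{eq:text}) for the length-$\hat n$ word $\textbf{u}\textbf{a}$, the digits in $\textbf{a}$ contribute the dominant term $(a_1+a_2 b+\cdots+a_t b^{t-1})/b^t$ while the remaining contribution lies in $[0,b^{-t})$, so both $x_1$ and $x_2$ automatically land in the transversality interval $I_{\textbf{a}}$.

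For the suffix part, the $I_1$-coordinate of $\mathcal{L}_i^{\varLambda^{\#}}$ forces $|S(x_1,\textbf{h})-S(x_2,\textbf{h})|\le b^{-i}$. Since the segment from $x_1$ to $x_2$ lies in $I_{\textbf{a}}$ and (A.1) applies (taking the trivial prefix $\textbf{h}<\textbf{h}$), the mean value theorem yields $|x_1-x_2|\le b^{-i}/\varDelta_1$. I then apply (\ref{F}) once more at the cut after $\textbf{q}$ to get
\begin{equation*}
S(x_1,\textbf{q}\textbf{j})-S(x_2,\textbf{q}\textbf{i})=[S(x_1,\textbf{q})-S(x_2,\textbf{q})]+\gamma^{\hat i}[S(\textbf{q}(x_1),\textbf{j})-S(\textbf{q}(x_2),\textbf{i})].
\end{equation*}
The second summand is $O(b^{-i})$ because $\gamma^{\hat i}\le b^{-i}$ and $\phi$ is bounded. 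For the first summand, a direct computation gives the uniform bound $|S'(\cdot,\textbf{q})|\le\|\phi'\|_\infty/(b-\gamma)$ (independent of $\textbf{q}$ and $\hat i$), which combined with the bound on $|x_1-x_2|$ yields $|S(x_1,\textbf{q})-S(x_2,\textbf{q})|=O(b^{-i})$. Multiplying the whole suffix bracket by $\gamma^{\hat n}\le b^{-n}$ contributes another $O(b^{-(n+i)})$.

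\emph{Main obstacle.} The essential step is converting the partition-cell condition $|S(x_1,\textbf{h})-S(x_2,\textbf{h})|\le b^{-i}$ into the physical bound $|x_1-x_2|=O(b^{-i})$; this is exactly the role of the transversality (A.1), and it explains why the hypothesis forces the common suffix $\textbf{a}$ on $\textbf{u}$ and $\textbf{v}$ (so that both base-points fall inside the interval $I_{\textbf{a}}$ where $|S'(\cdot,\textbf{h})|$ is bounded below). Everything else is a telescoping argument based on (\ref{F}) together with crude sup-norm estimates, and the $\textbf{h}'$-constraint of $\mathcal{L}_i^{\varLambda^{\#}}$ is not actually used here (it will presumably be needed in later lemmas to extract the refined transversality (A.2)). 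Summing the contributions and choosing $R$ as a constant absorbing $b$, $1/\varDelta_1$, $\|\phi'\|_\infty/(b-\gamma)$ and $\|\phi\|_\infty/(1-\gamma)$ produces the stated inequality (\ref{EqSupport}).
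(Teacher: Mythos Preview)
Your proposal is correct and follows essentially the same route as the paper's own proof: both split via the identity (\ref{F}) at the levels $\hat n$ and $\hat n+\hat i$, use the $J$-coordinate of $\mathcal{L}_i^{\varLambda^{\#}}$ for the prefix term, invoke (A.1) on the $I_1$-coordinate to convert $|S(x_1,\textbf{h})-S(x_2,\textbf{h})|\le b^{-i}$ into $|x_1-x_2|=O(b^{-i})$, and then bound the remaining pieces by the uniform Lipschitz estimate for $S(\cdot,\textbf{q})$ together with $\gamma^{\hat n}\le b^{-n}$ and $\gamma^{\hat i}\le b^{-i}$. Your remark that the $I_2$ (i.e., $\textbf{h}'$) coordinate is unused here is also consistent with the paper's argument.
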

\begin{proof} We only need to consider the case $\textbf{u}\neq\textbf{v}$.
	By the definition of the partition  $\mathcal{L}^{\varLambda^{\#}}_i$, we have
	$$|S(\textbf{u}\textbf{a}(x_0),\textbf{h})-S(\textbf{v}\textbf{a}(x_0),\textbf{h})|\le1/b^i.$$
	Combining this with  (A.1) we have
	$$|\textbf{u}\textbf{a}(x_0)-\textbf{v}\textbf{a}(x_0)|=O(1/b^i)$$
	since $\textbf{u}\textbf{a}(x_0),\,\textbf{v}\textbf{a}(x_0)\in I_{\textbf{a}}$
	( $I_{\textbf{a}}$ is defined in Sect. \ref{tra}).
	 Thus
	\begin{equation}\label{EqSpart2}
	|S(\textbf{u}\textbf{a}(x_0),\textbf{q})-S(\textbf{v}\textbf{a}(x_0),\textbf{q})|=O(1/b^i).
	\end{equation}
	Also we have
	\begin{equation}\label{EqSpart1}
	|S(x_0,\textbf{u}\textbf{a})-S(x_0,\textbf{v}\textbf{a})|=O(1/b^{n+i})
	\end{equation}
	by the definition of the  partition  $\mathcal{L}^{\varLambda^{\#}}_i$. Combining (\ref{EqSpart1}), (\ref{EqSpart2}) with 
	$$S(x_0,\textbf{u}\textbf{a}\textbf{q}\,\textbf{j})=S(x_0,\textbf{u}\textbf{a})+\gamma^{\hat{n}}S(\textbf{u}\textbf{a}(x_0),\textbf{q})+\gamma^{\hat{n}+\hat{i}}S(\textbf{u}\textbf{a}\textbf{q}(x_0),\textbf{j})$$
	and 
	$$S(x_0,\textbf{v}\textbf{a}\textbf{q}\,\textbf{i})=S(x_0,\textbf{v}\textbf{a})+\gamma^{\hat{n}}S(\textbf{v}\textbf{a}(x_0),\textbf{q})+\gamma^{\hat{n}+\hat{i}}S(\textbf{v}\textbf{a}\textbf{q}(x_0),\textbf{i}),$$
	we have (\ref{EqSupport}) holds for some constant $R>0$ large enough.
	\end{proof}

	It is necessary to introduce the following notation. For any probability measure $\xi\in\mathscr{P}(\varLambda^{\#})$ and $\textbf{u}\in\varLambda^{\#}$, let probability measure  $A_{\textbf{u}}(\xi)\in\mathscr{P}(\mathbb{R})$ be such that
	\begin{equation}\label{measure}
	A_{\textbf{u}}(\xi)=\sum_{\textbf{w}\in\text{supp}(\xi)}\xi(\{\textbf{w}\})\,\delta_{S(x_0,\textbf{w}\,\textbf{u})}.
	\end{equation}
	
	
\begin{lemma}\label{lem:KeyPointLow}
	There exists a constant $L_1>0$ such that the following holds for positive  integers $n,\,k,\,i$
	 and $\textbf{q}\in\varLambda^{\#}$.
	If $\xi$ is a probability measure supported in an element of $\mathcal{L}_i^{\varLambda^{\#}}$ such that 
	$|\textbf{w}|=\hat{n}$ and $I_{\textbf{w}}\subset I_{\textbf{a}}$ for
	each element $\textbf{w}$ in the support of $\xi$, then
	$$H\big(\xi,\mathcal{L}_{i+k}^{\varLambda^{\#}}\big)
	\le H\big(A_{\textbf{h}'\textbf{q}}(\xi),\mathcal{L}_{i+k+n}\big)+H\big( A_{\textbf{h}\textbf{q}}(\xi),\mathcal{L}_{i+k+n}\big)+L_1.$$
\end{lemma}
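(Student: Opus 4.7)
I plan to compare the partition $\mathcal{L}_{i+k}^{\varLambda^{\#}}$ with the pullback of a product partition under the auxiliary map
\[
\Phi:\varLambda^{\#}\to\mathbb{R}^2,\quad \Phi(\textbf{w})=\bigl(S(x_0,\textbf{w}\textbf{h}\textbf{q}),\,S(x_0,\textbf{w}\textbf{h}'\textbf{q})\bigr),
\]
whose two coordinate pushforwards of $\xi$ are precisely $A_{\textbf{h}\textbf{q}}(\xi)$ and $A_{\textbf{h}'\textbf{q}}(\xi)$. By subadditivity of entropy across coordinates,
\[
H\bigl(\Phi_*\xi,\,\mathcal{L}_{i+k+n}\times\mathcal{L}_{i+k+n}\bigr)\le H\bigl(A_{\textbf{h}\textbf{q}}(\xi),\mathcal{L}_{i+k+n}\bigr)+H\bigl(A_{\textbf{h}'\textbf{q}}(\xi),\mathcal{L}_{i+k+n}\bigr),
\]
so the lemma will follow once I prove
\[
H\bigl(\xi,\mathcal{L}_{i+k}^{\varLambda^{\#}}\bigr)\le H\bigl(\xi,\Phi^{-1}(\mathcal{L}_{i+k+n}\times\mathcal{L}_{i+k+n})\bigr)+L_1.
\]
By the standard conditional-entropy inequality, this reduces to showing that on $\operatorname{supp}(\xi)$ each atom of $\Phi^{-1}(\mathcal{L}_{i+k+n}\times\mathcal{L}_{i+k+n})$ meets at most a \emph{universally bounded} number $L$ of atoms of $\mathcal{L}_{i+k}^{\varLambda^{\#}}$, with $L_1=\log_b L$.

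To verify this bounded multiplicity I would exploit identity~(\ref{F}), namely
\[
S(x_0,\textbf{w}\textbf{h}^{*}\textbf{q})=S(x_0,\textbf{w})+\gamma^{\hat n}\,S\bigl(\textbf{w}(x_0),\textbf{h}^{*}\textbf{q}\bigr),\quad \textbf{h}^{*}\in\{\textbf{h},\textbf{h}'\},
\]
so that subtracting the two coordinates of $\Phi$ cancels $S(x_0,\textbf{w})$ and gives
\[
S(x_0,\textbf{w}\textbf{h}\textbf{q})-S(x_0,\textbf{w}\textbf{h}'\textbf{q})=\gamma^{\hat n}\bigl[S(z,\textbf{h}\textbf{q})-S(z,\textbf{h}'\textbf{q})\bigr],\quad z=\textbf{w}(x_0).
\]
Since $I_{\textbf{w}}\subset I_{\textbf{a}}$ places $z\in I_{\textbf{a}}$, property (A.2) applied with $\textbf{i}=\textbf{h}\textbf{q}$, $\textbf{j}=\textbf{h}'\textbf{q}$ gives $|S'(z,\textbf{h}\textbf{q})-S'(z,\textbf{h}'\textbf{q})|>\varDelta_1$ on $I_{\textbf{a}}$, so $z\mapsto S(z,\textbf{h}\textbf{q})-S(z,\textbf{h}'\textbf{q})$ is bi-Lipschitz there with lower Lipschitz constant $\varDelta_1$. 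If $\textbf{w},\textbf{w}'\in\operatorname{supp}(\xi)$ share an atom of $\Phi^{-1}(\mathcal{L}_{i+k+n}\times\mathcal{L}_{i+k+n})$, both coordinates of $\Phi(\textbf{w})-\Phi(\textbf{w}')$ are bounded by $b^{-(i+k+n)}$, so the previous display forces
\[
\gamma^{\hat n}\varDelta_1\,|\textbf{w}(x_0)-\textbf{w}'(x_0)|\le 2\,b^{-(i+k+n)},
\]
and using $\gamma^{\hat n}>\gamma\,b^{-n}$ from~(\ref{eqn:n'}) gives $|\textbf{w}(x_0)-\textbf{w}'(x_0)|\le C\,b^{-(i+k)}$.

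Once $\textbf{w}(x_0)$ is pinned at scale $b^{-(i+k)}$, the uniform bound $\|S(\cdot,\textbf{h}^{*})\|_{C^1}\le \|\phi'\|_\infty/(1-\gamma)$ yields $|S(\textbf{w}(x_0),\textbf{h})-S(\textbf{w}'(x_0),\textbf{h})|\le C'\,b^{-(i+k)}$ and similarly for $\textbf{h}'$. Substituting this back into the identity for $S(x_0,\textbf{w}\textbf{h}\textbf{q})$ and combining with $\gamma^{\hat n}\cdot C'\,b^{-(i+k)}=O(b^{-(i+k+n)})$ gives $|S(x_0,\textbf{w})-S(x_0,\textbf{w}')|\le C''\,b^{-(i+k+n)}$. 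Since $\mathcal{L}_{i+k}^{\varLambda^{\#}}$ on the length-$\hat n$ slice partitions the three $\overline{\pi}$-coordinates at scales $b^{-(i+k)},b^{-(i+k)},b^{-(i+k+[\hat n\log_b(1/\gamma)])}$ and the last scale equals $b^{-(i+k+n)}$ up to a factor depending only on $\gamma,b$ (again from~(\ref{eqn:n'})), the two triples agree up to $O(1)$ atoms of the product partition, giving the required $L$.

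The main difficulty is keeping every constant \emph{uniform} in $i,k,n$ and in the finite word $\textbf{q}$. Uniformity in $\textbf{q}$ is free, since $\|S(\cdot,\textbf{q})\|_{C^1}\le \|\phi'\|_\infty/(1-\gamma)$ holds independently of $\textbf{q}$; uniformity in $i,k,n$ is built into the geometric definition of $\hat n$ via the constant-gap relation $[\hat n\log_b(1/\gamma)]=n+O(1)$. After this careful accounting the constant $L_1$ will depend only on $\phi,b,\gamma$ and the fixed data $t,\textbf{a},\textbf{h},\textbf{h}',\varDelta_1$, as required.
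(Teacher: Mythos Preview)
Your proposal is correct and follows essentially the same route as the paper: the paper defines the same map $F(\textbf{w})=(S(x_0,\textbf{w}\textbf{h}\textbf{q}),S(x_0,\textbf{w}\textbf{h}'\textbf{q}))$, reduces to the same bounded-multiplicity claim, and extracts $|\textbf{w}(x_0)-\textbf{w}'(x_0)|=O(b^{-(i+k)})$ from (A.2) exactly as you do before recovering the remaining $\overline{\pi}$-coordinates. The only cosmetic difference is that in the back-substitution step the paper bounds $|S(\textbf{w}(x_0),\textbf{h}\textbf{q})-S(\textbf{w}'(x_0),\textbf{h}\textbf{q})|$ (with the full word $\textbf{h}\textbf{q}$) rather than just $\textbf{h}$, which is what your identity actually requires---but your uniformity-in-$\textbf{q}$ remark already covers this.
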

\begin{proof}
	Define $F:\text{supp}(\xi)\to\mathbb{R}^2$, by
	$$\textbf{w}\to\big(S(x_0,\textbf{w}\textbf{h}\textbf{q}),\,S(x_0,\textbf{w}\textbf{h}'\textbf{q})\big).$$
	\begin{claim}\label{cla:entropy}
		There exists a constant $L_1>0$ such that
		$$H\big(\xi,\mathcal{L}_{i+k}^{\varLambda^{\#}}\big)
		\le H\big( F\xi,\mathcal{L}_{i+k+n}^{\mathbb{R}^2}\big)+L_1.$$
	\end{claim}
	To prove this claim, take $I\in\mathcal{L}_{i+k+n}^{\mathbb{R}^2}$.
	It suffices to show that the cardinality of the set $\{J\in\mathcal{L}_{i+k}^{\varLambda^{\#}}\,\big|\, J\cap F^{-1}(I)\ne\emptyset\mbox{ and }J\cap \text{supp}(\xi)\ne\emptyset\}$ is uniformly bounded. For any $\textbf{w}^{(m)}\in\text{supp}(\xi),\,\text{with}\,F(\textbf{w}^{(m)})\in I,\,m=1,2,$ we have
	$$\bigg|\bigg(S(x_0,\textbf{w}^{(1)}\textbf{h}\textbf{q})-S(x_0,\textbf{w}^{(1)}\textbf{h}'\textbf{q})\bigg)-\bigg(S(x_0,\textbf{w}^{(2)}\textbf{h}\textbf{q})-S(x_0,\textbf{w}^{(2)}\textbf{h}'\textbf{q})\bigg)\bigg|=O(b^{-(i+k+n)})$$
	by the definition of the function $F$,
	which implies
		$$\bigg|\bigg(S(\textbf{w}^{(1)}(x_0),\textbf{h}\textbf{q})-S(\textbf{w}^{(1)}(x_0),\textbf{h}'\textbf{q})\bigg)-\bigg(S(\textbf{w}^{(2)}(x_0),\textbf{h}\textbf{q})-S(\textbf{w}^{(2)}(x_0),\textbf{h}'\textbf{q})\bigg)\bigg|=O(b^{-(k+i)})$$
		by (\ref{FM}).
		Therefore
		$$|\textbf{w}^{(1)}(x_0)-\textbf{w}^{(2)}(x_0)|=O(b^{-(k+i)})$$
		by (A.2)  and  $\textbf{w}^{(1)}(x_0),\,\textbf{w}^{(2)}(x_0)\in I_{\textbf{a}}$
		( (A.2) is in Sect. \ref{tra}).
		So we have
		\begin{equation}\label{EqProjection1}
		\big|S(\textbf{w}^{(1)}(x_0),\textbf{h})-S(\textbf{w}^{(2)}(x_0),\textbf{h})\big|=O(b^{-(k+i)})
		\end{equation}
		\begin{equation}\label{EqProjection2}
		\big|S(\textbf{w}^{(1)}(x_0),\textbf{h}')-S(\textbf{w}^{(2)}(x_0),\textbf{h}')\big|=O(b^{-(k+i)})
		\end{equation}
		and
		$$\big|S(\textbf{w}^{(1)}(x_0),\textbf{h}\textbf{q})-S(\textbf{w}^{(2)}(x_0),\textbf{h}\textbf{q})\big|=O(b^{-(k+i)}).$$
	Combining this with
	$$\big|S(x_0,\textbf{w}^{(1)}\textbf{h}\textbf{q})-S(x_0,\textbf{w}^{(2)}\textbf{h}\textbf{q})\big|=O(b^{-(k+i+n)})$$
	i.e.
	$$\big|S(x_0,\textbf{w}^{(1)})-S(x_0,\textbf{w}^{(2)})-\gamma^{\hat{n}}\big(S(\textbf{w}^{(1)}(x_0),\textbf{h}\textbf{q})-S(\textbf{w}^{(2)}(x_0),\textbf{h}\textbf{q})\big)\big|=O(b^{-(k+i+n)})$$
	we have
	\begin{equation}\label{EqProjection3}
	\big|S(x_0,\textbf{w}^{(1)})-S(x_0,\textbf{w}^{(2)})\big|=O(b^{-(k+i+n)}).
	\end{equation}
	 This with (\ref{EqProjection1}) and (\ref{EqProjection2}) imply that our claim holds by the definition of $\mathcal{L}_{i+k}^{\varLambda^{\#}}.$
	 
	 Finally let us consider the functions $F_1,\,F_2:\text{supp}(\xi)\to\mathbb{R},$ by
	$$F_1(\textbf{w})=S(x_0,\textbf{w}\textbf{h}\textbf{q})\quad\quad F_2(\textbf{w})=S(x_0,\textbf{w}\textbf{h}'\textbf{q}).$$
	 Then
	 $$H\big(\ F\xi,\mathcal{L}_{i+k+n}^{\mathbb{R}^2}\big)=H\big(\ \xi,F^{-1}(\mathcal{L}_{i+k+n}^{\mathbb{R}^2})\big)=H\big(\ \xi,\bigvee _{j=1}^{2}F_{j}^{-1}(\mathcal{L}_{i+k+n})\big)$$
	 $$
	 \le \sum\limits_{j=1}^{2}H\big(\ \xi,F_{j}^{-1}(\mathcal{L}_{i+k+n})\big)=
	 H\big( A_{\textbf{h}'\textbf{q}}(\xi),\mathcal{L}_{i+k+n}\big)+H\big(\ A_{\textbf{h}\textbf{q}}(\xi),\mathcal{L}_{i+k+n}\big).$$
	 Combining this with Claim \ref{cla:entropy} this lemma holds.
\end{proof}

\section{Proof of Theorem A}\label{sec:pfThmA}
In this section, we will apply Hochman's criterion on entropy increasing to complete the proof of Theorem A.  For a  discrete measure $\xi\in\mathscr{P}(\varLambda^{\#})$ and $\textbf{q}\in\varLambda^{\#}$, let $B_{\textbf{q}}(\xi)$ denote the  Borel probability  measures in $\mathbb{R}$ such that for any Borel subset $A$ of  $\mathbb{R}$, 
\begin{equation}\label{measure2}
B_{\textbf{q}}(\xi)(A)=\xi\times\nu^{\mathbb{Z}_+}\big(\,\big\{\,(\textbf{w},\, \textbf{j})\in\varLambda^{\#}\times \Sigma: S(x_0,\textbf{w}\,\textbf{q}\,\textbf{j})\in A\big\}\,\big).
\end{equation}

  The key point is to introduce the discrete measure in $\varLambda^{\#}$
  \begin{equation}\label{measure3}
  \theta^{\,\textbf{u}}_n:=\frac1{b^{\hat{n}-t}}\sum_{\textbf{w}\in\varLambda^{\hat{n}-t}}\delta_{\textbf{w}\textbf{u}}
  \end{equation}
   for any $\textbf{u}\in\varLambda^t$ and integer $n$ such that $\hat{n}>t$.
    Let $n,\,i$ be integers satisfying that $\hat{n},\,\hat{i}>t$.
    (\ref{FundementalFormular}) implies that, for any Borel subset  $A$ of  $\mathbb{R}$, $$m_{x_0}(A)=\frac1{b^{2t}}\sum_{(\textbf{u},\textbf{v})\in\varLambda^t\times\varLambda^t}\frac1{b^{\hat{i}-t}}\sum_{\textbf{q}\in\varLambda^{\hat{i}-t }}
   \frac1{b^{\hat{n}-t}}\sum_{\textbf{w}\in\varLambda^{\hat{n}-t}}\nu^{\mathbb{Z}_+}\bigg(\, \textbf{j}\in\Sigma: S\big(x_0,\textbf{w}\textbf{u}\textbf{v}\textbf{q}\,\textbf{j}\big)\in A\}\bigg).$$
   Therefore we have
   $$m_{x_0}(A)=\frac1{b^{2t}}\sum_{(\textbf{u},\textbf{v})\in\varLambda^t\times\varLambda^t}\frac1{b^{\hat{i}-t}}\sum_{\textbf{q}\in\varLambda^{\hat{i}-t }}\theta^{\,\textbf{u}}_n\times\nu^{\mathbb{Z}_+}\bigg(\,\bigg\{ (\textbf{i},\,\textbf{j})\in\varLambda^{\#}\times\Sigma: S(x_0,\textbf{i}\textbf{v}\textbf{q}\,\textbf{j})\in A\bigg\}\bigg)
  $$ 
   by the definition of $\theta^{\,\textbf{u}}_n.$ Thus we have
  \begin{equation}\label{EqFormular}
  m_{x_0}=\frac1{b^{2t}}\sum_{(\textbf{u},\textbf{v})\in\varLambda^t\times\varLambda^t}\frac1{b^{\hat{i}-t}}\sum_{\textbf{q}\in\varLambda^{\hat{i}-t }} B_{\textbf{v}\textbf{q}}
   (\theta^{\textbf{u}}_n)
  \end{equation}
  by the definition of $B_{\textbf{v}\textbf{q}} (\theta^{\textbf{u}}_n).$
 
 \subsection{The entropy of $\theta^{\,\textbf{a}}_n$}\label{FunctionEntropy}
 We start with analyzing the entropy of $\theta^{\,\textbf{a}}_n$ with respect to the partitions $\mathcal{L}_i^{\varLambda^{\#}}$.
 \begin{lemma}\label{lem:thetanL0} $$\lim_{n\to\infty}\frac{1}{n}{H}\left(\theta^{\,\textbf{a}}_n,\mathcal{L}_0^{\varLambda^{\#}}\right)=\lim_{n\to\infty} \frac{1}{n} H(m_{x_0}, \mathcal{L}_n)=\alpha.$$
 \end{lemma}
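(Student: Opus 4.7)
The plan is to reduce the left-hand side to the right-hand side through a chain of comparisons, each losing at most an additive $O(1)$ in entropy at scale $b^{-n}$, so that after dividing by $n$ and letting $n\to\infty$ all error terms vanish. The rightmost equality $\lim_n \frac{1}{n} H(m_{x_0},\mathcal{L}_n) = \alpha$ is immediate from Theorem \ref{TheoremB}, property (B.2), and Proposition \ref{prop:Young}, so the real task is to identify the first limit with this one.

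First I would unpack the partition $\mathcal{L}_0^{\varLambda^{\#}}$ on the support of $\theta^{\,\textbf{a}}_n$: since every atom is a word of length $\hat{n}$, two atoms lie in the same cell of $\mathcal{L}_0^{\varLambda^{\#}}$ iff their $S(x_0,\cdot)$-values lie in a common cell of $\mathcal{L}_{N}$ with $N=[\hat{n}\log_b(1/\gamma)]$, and from $\gamma^{\hat{n}}\le b^{-n}<\gamma^{\hat{n}-1}$ we have $N = n+O(1)$, so
$$H\bigl(\theta^{\,\textbf{a}}_n,\mathcal{L}_0^{\varLambda^{\#}}\bigr) = H\bigl((\pi_{x_0})_*\theta^{\,\textbf{a}}_n,\mathcal{L}_n\bigr) + O(1),$$
where $\pi_{x_0}(\textbf{w}):=S(x_0,\textbf{w})$. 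Next I would set $\zeta_n^{\textbf{u}}:=(\pi_{x_0})_*\theta^{\,\textbf{u}}_n$ for $\textbf{u}\in\varLambda^t$ and use the identity (\ref{F}) to obtain
$$\sup_{\textbf{w}\in\varLambda^{\hat n-t}}\bigl|S(x_0,\textbf{w}\textbf{u})-S(x_0,\textbf{w}\textbf{a})\bigr|\le \frac{2\gamma^{\hat{n}-t}\|\phi\|_\infty}{1-\gamma} = O_t(b^{-n}),$$
so Lemma \ref{lem:pfclose} yields $|H(\zeta_n^{\textbf{u}},\mathcal{L}_n)-H(\zeta_n^{\textbf{a}},\mathcal{L}_n)|=O_t(1)$ uniformly in $n$.

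The uniform discretisation $\mu_n := \frac{1}{b^t}\sum_{\textbf{u}\in\varLambda^t}\zeta_n^{\textbf{u}} = \frac{1}{b^{\hat{n}}}\sum_{\textbf{j}\in\varLambda^{\hat{n}}}\delta_{S(x_0,\textbf{j})}$ then satisfies, by Lemma \ref{lem:concave} together with the standard mixture upper bound on entropy, $H(\mu_n,\mathcal{L}_n) = H(\zeta_n^{\textbf{a}},\mathcal{L}_n) + O_t(1)$. Finally, using the bound $|S(x_0,\textbf{j})-S(x_0,\textbf{j}_{\hat n})|\le \gamma^{\hat{n}}\|\phi\|_\infty/(1-\gamma)=O(b^{-n})$ and regarding $m_{x_0}$ and $\mu_n$ as the pushforwards of $\nu^{\mathbb{Z}_+}$ under the maps $\textbf{j}\mapsto S(x_0,\textbf{j})$ and $\textbf{j}\mapsto S(x_0,\textbf{j}_{\hat n})$ respectively, another application of Lemma \ref{lem:pfclose} gives $H(m_{x_0},\mathcal{L}_n)=H(\mu_n,\mathcal{L}_n)+O(1)$. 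Chaining the four comparisons, dividing by $n$, and letting $n\to\infty$ produces the required double equality.

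The only real technical nuisance is book-keeping across scale-shifts: each invocation of Lemma \ref{lem:pfclose} formally needs $\sup|f-g|\le b^{-n}$ rather than $\le Cb^{-n}$, so I would absorb the constants by working at scale $n-\lceil\log_b C\rceil$ and paying a bounded price when translating back via the elementary refinement inequality $H(\mu,\mathcal{L}_n)-H(\mu,\mathcal{L}_{n-k})\le k$. Everything else in the argument is soft and uses only the basic identities (\ref{FundementalFormular}) and (\ref{F}) together with $\|\phi\|_\infty<\infty$; notably, the delicate hypotheses about exponential separation, transversality, and the full partitions $\mathcal{L}^{\varLambda^{\#}}_i$ for $i\ge 1$ play no role at this stage and are reserved for the subsequent entropy-growth argument.
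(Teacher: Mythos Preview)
Your argument is correct and follows essentially the same idea as the paper: identify $H(\theta^{\,\textbf{a}}_n,\mathcal{L}_0^{\varLambda^{\#}})$ with $H\bigl((\pi_{x_0})_*\theta^{\,\textbf{a}}_n,\mathcal{L}_n\bigr)+O(1)$ and then compare with $H(m_{x_0},\mathcal{L}_n)$ using that modifying or truncating the tail of a word changes $S(x_0,\cdot)$ by $O(b^{-n})$. The paper's version is a bit more direct: it observes that $\theta^{\,\textbf{a}}_n$ is itself the pushforward of $\nu^{\mathbb{Z}_+}$ under the map $\pi_n(\textbf{j})=\textbf{j}_{\hat n-t}\textbf{a}$, so one can compare $f_n\circ\pi_n$ with $S_{x_0}$ in a single application of Lemma~\ref{lem:pfclose}, bypassing your intermediate averaging over $\textbf{u}\in\varLambda^t$ and the mixture bounds.
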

\begin{proof}
	For $\hat{n}>t$,
	define $\pi_n\,:\Sigma\to\,\varLambda^{\hat{n}}$ by $\pi(\textbf{j})=\textbf{j}_{\hat{n}-t}\textbf{a}$ and
	$f_n\,:\varLambda^{\hat{n}}\to \,\mathbb{R}$ by 
	$f_n(\textbf{w})=S(x_0,\textbf{w})$
	where $\textbf{j}_{\hat{n}-t}=j_1j_2\ldots j_{\hat{n}-t}$.
	 Recall the map $S_{x_0}:\Sigma\to\,\mathbb{R}\,;\,\textbf{j}\,\mapsto\,S(x_0,\textbf{j}),$ so we have $f_n\pi_n-S_{x_0}=O(b^{-n}).$ Thus
	$$H(m_{x_0}, \mathcal{L}_n)=H\left(S_{x_0}\nu^{\mathbb{Z}_+}, \mathcal{L}_n \right)=H(f_n\pi_n\nu^{\mathbb{Z}_+}, \mathcal{L}_n )+O(1)=H(f_n\theta^{\,\textbf{a}}_n, \mathcal{L}_n )+O(1)$$
	since $\pi_n\nu^{\mathbb{Z}_+}=\theta^{\,\textbf{a}}_n$. 
	By $\lim_{n\to\infty} \frac{1}{n} H(m_{x_0}, \mathcal{L}_n)=\alpha$, we have
	$$\lim_{n\to\infty}\frac{1}{n}{H}\left(f_n\theta^{\,\textbf{a}}_n, \mathcal{L}_n \right)=\alpha.$$
	Since ${H}\left(f_n\theta^{\,\textbf{a}}_n, \mathcal{L}_n \right)={H}\left(\theta^{\,\textbf{a}}_n,\mathcal{L}_0^{\varLambda^{\#}}\right)$, the lemma holds.
\end{proof}

 \begin{lemma}\label{lem:thetanLcn} 
 	 $$\lim_{n\to\infty,\,n\in M}\frac{1}{n}{H}\left(\theta^{\,\textbf{a}}_n,\mathcal{L}_{Cn}^{\varLambda^{\#}}\right)=\frac {\log b}{\log(1/\gamma)}$$
 	 where $M$ is from Sect. \ref{sec:partitionX}.	
 \end{lemma}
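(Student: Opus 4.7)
The plan is to obtain the limit by matching upper and lower bounds on $H(\theta^{\,\textbf{a}}_n,\mathcal{L}_{Cn}^{\varLambda^{\#}})/n$. The upper bound is a counting argument valid for all $n$; the lower bound is what requires the exponential separation from (B.1) and is the reason the limit is taken along $n\in M$.

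For the upper bound, note that $\theta^{\,\textbf{a}}_n$ is the uniform probability measure on the $b^{\hat{n}-t}$ distinct words $\{\textbf{w}\textbf{a}:\textbf{w}\in\varLambda^{\hat{n}-t}\}\subset\varLambda^{\hat{n}}$, so any partition resolves at most $b^{\hat{n}-t}$ atoms, and hence
$$H\bigl(\theta^{\,\textbf{a}}_n,\mathcal{L}_{Cn}^{\varLambda^{\#}}\bigr)\le\log_b b^{\hat{n}-t}=\hat{n}-t.$$
Dividing by $n$ and using the defining inequality $\gamma^{\hat{n}}\le b^{-n}<\gamma^{\hat{n}-1}$, which forces $\hat{n}/n\to\log b/\log(1/\gamma)$, yields the one-sided limit $\limsup\le \log b/\log(1/\gamma)$ without any restriction to $M$.

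For the matching lower bound along $n\in M$, I would use that every word in $\text{supp}(\theta^{\,\textbf{a}}_n)$ has the common length $\hat{n}$, so that the first coordinate of $\overline{\pi}$ is constant on $\text{supp}(\theta^{\,\textbf{a}}_n)$; hence two words $\textbf{w}\textbf{a},\textbf{v}\textbf{a}$ can lie in the same atom of $\mathcal{L}_{Cn}^{\varLambda^{\#}}$ only if, in particular, $S(x_0,\textbf{w}\textbf{a})$ and $S(x_0,\textbf{v}\textbf{a})$ share a common dyadic interval of $\mathcal{L}_{Cn+[\hat{n}\log_b 1/\gamma]}$, hence differ by less than $b^{-(Cn+[\hat{n}\log_b 1/\gamma])}$. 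Property (B.1), applied with $\textbf{w}=\textbf{a}$, tells me that for $n\in M$ with $\hat{n}>t$ the set $X_{\hat{n}}^{\textbf{a},x_0}=\{S(x_0,\textbf{j}\textbf{a}):\textbf{j}\in\varLambda^{\hat{n}-t}\}$ is pairwise separated by more than $\gamma^{C\hat{n}/2}$. Using $\gamma^{\hat{n}-1}>b^{-n}$ one obtains $\gamma^{C\hat{n}/2}>\gamma^{C/2}b^{-Cn/2}$, while $\hat{n}\log_b 1/\gamma\ge n$ forces the dyadic partition scale above to be at most $b^{-(C+1)n}$; the comparison therefore reduces to the elementary inequality $\gamma^{C/2}b^{-Cn/2}>b^{-(C+1)n}$, valid for every $n>C\log_b(1/\gamma)/(C+2)$. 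Thus for all large $n\in M$ distinct $\textbf{w}\in\varLambda^{\hat{n}-t}$ yield $\textbf{w}\textbf{a}$ in distinct atoms of $\mathcal{L}_{Cn}^{\varLambda^{\#}}$, forcing $H(\theta^{\,\textbf{a}}_n,\mathcal{L}_{Cn}^{\varLambda^{\#}})=\hat{n}-t$ and hence $\liminf\ge \log b/\log(1/\gamma)$.

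The only delicate point is this scale comparison, which is routine once one recognizes that the parameter $\gamma^{C/2}$ (rather than, say, $\gamma^{C}$) in (B.1) was chosen precisely so that the separation scale $\asymp b^{-Cn/2}$ beats the partition scale $\le b^{-(C+1)n}$ for all large $n$; the exponent $C/2$ leaves exactly the slack needed to win. Everything else reduces to bookkeeping for a uniform atomic measure of known cardinality.
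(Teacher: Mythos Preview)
Your proof is correct and follows essentially the same approach as the paper: both show that for large $n\in M$ the separation from (B.1) forces distinct words $\textbf{w}\textbf{a}$ into distinct atoms of $\mathcal{L}_{Cn}^{\varLambda^{\#}}$, so that $H(\theta^{\,\textbf{a}}_n,\mathcal{L}_{Cn}^{\varLambda^{\#}})=\hat{n}-t$, and then use $\hat{n}/n\to\log b/\log(1/\gamma)$. Your version is more explicit about the scale comparison (the paper compresses this to one line), but the substance is identical.
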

\begin{proof}
	For $n\in M$ large enough, we have
	$$|S(x_0,\textbf{j} \textbf{a} )-S(x_0,\textbf{i} \textbf{a} )|>b^{-cn}\quad\quad\forall\textbf{j}\neq\textbf{i}\in\varLambda^{\hat{n}-t}$$
	by (B.1).
	Thus $${H}\left(\theta^{\,\textbf{a}}_n,\mathcal{L}_{Cn}^{\varLambda^{\#}}\right)=\hat{n}-t$$
	by the definition of $\mathcal{L}_{Cn}^{\varLambda^{\#}}$.
	Since $\lim_{n\to\infty} \hat{n}/n=\log_{1/\gamma}b$, the lemma holds.
\end{proof}

	\subsection{Decomposition of entropy}
	In the following Lemma \ref{lem:thetadec}, we decompose the entropy of $\theta^{\,\textbf{a}}_n$ and $m_{x_0}$ into small scales.  
	
	\begin{lemma}\label{lem:thetadec}
		For any $\varepsilon>0$, there exists $C_0(\varepsilon)>0$ such that the following holds.
		 For any positive integers $k,n$ satisfying that $n> C_0(\varepsilon)k$, we have
		\begin{equation}\label{eqn:thetadec}
		\frac{1}{Cn}H(\theta_n^{\,\textbf{a}}, \mathcal{L}_{Cn}^{\varLambda^{\#}}|\mathcal{L}_0^{\varLambda^{\#}})\le \mathbb{E}_{0\le i<Cn}^{\theta_n^{\,\textbf{a}}} \left[\frac{1}{k}H((\theta_n^{\,\textbf{a}})_{\textbf{w},i}, \mathcal{L}_{i+k}^{\varLambda^{\#}})\right]+\varepsilon,
		\end{equation}
		\begin{equation}\label{eqn:thetamudec}
		\frac{1}{Cn} H(m_{x_0}, \mathcal{L}_{(C+1)n}|\mathcal{L}_{n})\ge \frac1{b^{2t}}\sum_{(\textbf{u},\textbf{v})\in\varLambda^t\times\varLambda^t}\frac1{Cn}\sum_{0\le i<Cn,\, t<\hat{i} }\frac1{b^{\hat{i}-t}}\sum_{\textbf{q}\in\varLambda^{\hat{i}-t}}\bigg[\frac1k H(B_{\textbf{v}\textbf{q}}(\theta^{\,\textbf{u}}_n),\,\mathcal{L}_{i+k+n}|\mathcal{L}_{i+n})\bigg]-\varepsilon.
		\end{equation}
	\end{lemma}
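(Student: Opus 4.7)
My plan is to derive both inequalities from a single telescoping-and-double-counting estimate: for any Borel probability measure $\mu$, any $N,k\in\mathbb{Z}_+$ and any $a\in\mathbb{N}$,
\[
\Bigl|\,H(\mu,\mathcal{L}_{N+a}\mid\mathcal{L}_a)-\tfrac{1}{k}\sum_{i=0}^{N-1}H(\mu,\mathcal{L}_{i+k+a}\mid\mathcal{L}_{i+a})\,\Bigr|\le (k-1)\log b.
\]
This will follow by expanding each conditional entropy as $H(\mu,\mathcal{L}_{j+k}\mid\mathcal{L}_j)=\sum_{r=0}^{k-1}H(\mu,\mathcal{L}_{j+r+1}\mid\mathcal{L}_{j+r})$ and observing that in the resulting double sum each ``one-step'' block $H(\mu,\mathcal{L}_{m+1}\mid\mathcal{L}_m)$ is counted between $1$ and $k$ times, with the full count $k$ occurring on a range of $m$ that differs from $[a,N+a-1]$ by at most $k-1$ indices on each end. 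The same estimate will hold for the partitions $\mathcal{L}_{\bullet}^{\varLambda^{\#}}$, with $\log b$ replaced by a uniform constant, because the per-step entropy increase $H(\mu,\mathcal{L}_{i+1}^{\varLambda^{\#}}\mid\mathcal{L}_{i}^{\varLambda^{\#}})$ is bounded independently of $i$ for every probability measure $\mu$ on $\varLambda^{\#}$ whose $\overline{\pi}$-image lies in a fixed bounded region of $\{m\}\times\R^{3}$. I will then fix $C_0(\varepsilon)$ so that the resulting boundary error divided by $Cn$ is at most $\varepsilon$ whenever $n>C_0(\varepsilon)k$.

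For (\ref{eqn:thetadec}), I would apply the estimate above to $\mu=\theta_n^{\,\textbf{a}}$ with $a=0$ and $N=Cn$ in the partitions $\mathcal{L}_{\bullet}^{\varLambda^{\#}}$. Combined with the identity $H(\theta_n^{\,\textbf{a}},\mathcal{L}_{i+k}^{\varLambda^{\#}}\mid\mathcal{L}_i^{\varLambda^{\#}})=\mathbb{E}^{\theta_n^{\,\textbf{a}}}[H((\theta_n^{\,\textbf{a}})_{\textbf{w},i},\mathcal{L}_{i+k}^{\varLambda^{\#}})]$ and averaging over $0\le i<Cn$, the upper-bound direction gives (\ref{eqn:thetadec}) directly.

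For (\ref{eqn:thetamudec}), I would apply the same estimate to $\mu=m_{x_0}$ with $a=n$ and $N=Cn$ in the dyadic partitions $\mathcal{L}_{\bullet}$. The lower-bound direction yields
\[
\tfrac{1}{Cn}H(m_{x_0},\mathcal{L}_{(C+1)n}\mid\mathcal{L}_n)\ge \tfrac{1}{Cn}\sum_{i=0}^{Cn-1}\tfrac{1}{k}H(m_{x_0},\mathcal{L}_{i+k+n}\mid\mathcal{L}_{i+n})-\varepsilon.
\]
I would then discard the non-negative terms corresponding to $\hat{i}\le t$, which only weakens the lower bound. For each remaining $i$ the identity (\ref{EqFormular}) exhibits $m_{x_0}$ as a convex combination, with uniform weights, of the measures $B_{\textbf{v}\textbf{q}}(\theta_n^{\textbf{u}})$ indexed by $(\textbf{u},\textbf{v})\in\varLambda^t\times\varLambda^t$ and $\textbf{q}\in\varLambda^{\hat{i}-t}$. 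Since $\mathcal{L}_{i+k+n}$ refines $\mathcal{L}_{i+n}$, concavity of conditional entropy (Lemma \ref{lem:concave}) delivers
\[
H(m_{x_0},\mathcal{L}_{i+k+n}\mid\mathcal{L}_{i+n})\ge \tfrac{1}{b^{2t}}\sum_{(\textbf{u},\textbf{v})\in\varLambda^t\times\varLambda^t}\tfrac{1}{b^{\hat{i}-t}}\sum_{\textbf{q}\in\varLambda^{\hat{i}-t}}H\bigl(B_{\textbf{v}\textbf{q}}(\theta_n^{\textbf{u}}),\,\mathcal{L}_{i+k+n}\mid\mathcal{L}_{i+n}\bigr).
\]
Plugging this into the previous lower bound and swapping the order of summation will produce exactly (\ref{eqn:thetamudec}).

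I do not expect a substantive mathematical obstacle; the proof is really a careful tracking of indices. The two minor points that deserve explicit attention are (a) that (\ref{EqFormular}) is valid only for $\hat{i}>t$, so the restriction of the summation range must be made before invoking concavity (harmless for a lower bound), and (b) the uniform bound on the one-step refinement factor of $\mathcal{L}_{\bullet}^{\varLambda^{\#}}$, which follows immediately since on the fibre $|\textbf{w}|=m$ each cell of $\mathcal{L}_i^{\varLambda^{\#}}$ subdivides into at most $b^{3}$ cells of $\mathcal{L}_{i+1}^{\varLambda^{\#}}$.
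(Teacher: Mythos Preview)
Your proposal is correct and follows essentially the same route as the paper: a telescoping/double-counting estimate for conditional entropies (the paper defers this to \cite[Lemma~7.3]{ren2021dichotomy}), followed for (\ref{eqn:thetamudec}) by the decomposition (\ref{EqFormular}) and concavity of conditional entropy. Your explicit bound $(k-1)\log b$ (resp.\ $3(k-1)$ in base $b$ for $\mathcal{L}_\bullet^{\varLambda^\#}$) and your observation about the $b^3$ refinement factor are exactly the ingredients the paper invokes implicitly via the constant $L_2$ in Corollary~\ref{p}.
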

    \begin{proof}
    	The proof  is similar to \cite[Lemma7.3]{ren2021dichotomy}, thus we only give sketchy  proof for (\ref{eqn:thetamudec}).
    	By the same method as \cite[Lemma7.3]{ren2021dichotomy},  when $n/k$ is large enough, we have
    	$$\frac{1}{Cn} H(m_{x_0}, \mathcal{L}_{(C+1)n}|\mathcal{L}_{n})\ge \frac{1}{Cn}\sum_{0\le i< Cn}\left[\frac{1}{k} H(m_{x_0}, \mathcal{L}_{i+k+n}|\mathcal{L}_{i+n})\right]-\eps.$$
        Combining this with (\ref{EqFormular}),
         thus (\ref{eqn:thetamudec}) holds by concavity of conditional entropy.
    	   	
    \end{proof}

\begin{corollary}\label{p}
	If $\alpha<\frac{\log b}{\log1/\gamma}$, then
	there exists $\varDelta_2=\varDelta_2(\alpha)>0$, $p_0=p_0(\alpha,\,\varDelta_2)\in(0,1)$ and $C_1=C_1(\alpha,\,\varDelta_2),\,N_0=N_0(\alpha,\,\varDelta_2)>0$ such that the following holds.
	For any $k\in\mathbb{Z}_+$, $n\in M$  satisfied that $n/k> C_1,\,n\ge N_0$,
     we have
	$$\mathbb{P}_{0\le i< Cn}^{\theta_n^{\,\textbf{a}}} \left[\frac{1}{k}H((\theta_n^{\,\textbf{a}})_{\textbf{w},i}, \mathcal{L}_{i+k}^{\varLambda^{\#}})>\varDelta_2\right]>p_0. $$
\end{corollary}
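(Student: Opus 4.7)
The plan is to read off the lower bound by combining Lemmas~\ref{lem:thetanL0}--\ref{lem:thetadec}, and then promote the resulting mean estimate to a pointwise probability estimate via a uniform upper bound on the per-component entropy. Concretely, set
$$\eta := \tfrac{1}{C}\Bigl(\tfrac{\log b}{\log(1/\gamma)} - \alpha\Bigr),$$
which is strictly positive by the standing hypothesis $\alpha < \tfrac{\log b}{\log(1/\gamma)}$. From the definitions one checks immediately that $\mathcal{L}_{Cn}^{\varLambda^{\#}}$ refines $\mathcal{L}_0^{\varLambda^{\#}}$ (the three real coordinates of $\overline{\pi}$ are refined, the $m$-coordinate is unchanged). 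Hence $H(\theta_n^{\,\textbf{a}}, \mathcal{L}_{Cn}^{\varLambda^{\#}}\mid\mathcal{L}_0^{\varLambda^{\#}}) = H(\theta_n^{\,\textbf{a}}, \mathcal{L}_{Cn}^{\varLambda^{\#}}) - H(\theta_n^{\,\textbf{a}}, \mathcal{L}_0^{\varLambda^{\#}})$, and Lemmas~\ref{lem:thetanL0} and~\ref{lem:thetanLcn} jointly give
$$\lim_{n\to\infty,\,n\in M} \tfrac{1}{Cn}H\bigl(\theta_n^{\,\textbf{a}}, \mathcal{L}_{Cn}^{\varLambda^{\#}}\,\big|\,\mathcal{L}_0^{\varLambda^{\#}}\bigr) = \eta.$$

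Fixing an auxiliary parameter $\varepsilon>0$, for all sufficiently large $n\in M$ the conditional entropy above exceeds $\eta-\varepsilon$. Feeding this into inequality~(\ref{eqn:thetadec}) of Lemma~\ref{lem:thetadec}, for every $n > C_0(\varepsilon) k$ one obtains
$$\mathbb{E}_{0\le i< Cn}^{\theta_n^{\,\textbf{a}}}\Bigl[\tfrac{1}{k}H\bigl((\theta_n^{\,\textbf{a}})_{\textbf{w},i}, \mathcal{L}_{i+k}^{\varLambda^{\#}}\bigr)\Bigr] \;\ge\; \eta - 2\varepsilon.$$
To extract a pointwise probability bound from this mean, the crucial ingredient is a uniform upper bound on the random variable $Y_i := \tfrac{1}{k}H((\theta_n^{\,\textbf{a}})_{\textbf{w},i}, \mathcal{L}_{i+k}^{\varLambda^{\#}})$. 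Here one uses that every cell of $\mathcal{L}_i^{\varLambda^{\#}}$ is of the form $\overline{\pi}^{-1}(\{m\}\times I_1\times I_2\times J)$ with prescribed coordinates at level $i$, and refining to $\mathcal{L}_{i+k}^{\varLambda^{\#}}$ splits each of the three real coordinates by a factor of at most $b^k$. Since $\theta_n^{\,\textbf{a}}$ sits in a single word-length fibre (so the $m$-coordinate is fixed) and is compactly supported in the two interval coordinates, each component carries at most $O(b^{3k})$ populated sub-cells, giving a uniform bound $Y_i \le K$ for some absolute constant $K$.

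With $Y_i\in[0,K]$ and mean at least $\eta - 2\varepsilon$, the elementary inequality
$$\mathbb{E}_{0\le i<Cn}[Y_i] \;\le\; \varDelta_2 + K\cdot \mathbb{P}_{0\le i<Cn}^{\theta_n^{\,\textbf{a}}}\bigl[Y_i>\varDelta_2\bigr]$$
yields $\mathbb{P}_{0\le i<Cn}^{\theta_n^{\,\textbf{a}}}[Y_i>\varDelta_2] \ge (\eta - 2\varepsilon - \varDelta_2)/K$ for every $\varDelta_2>0$. Choosing $\varepsilon := \eta/5$, $\varDelta_2 := \eta/5$, $C_1 := C_0(\eta/5)$, and $N_0$ large enough that both the $n\in M$ limit above holds within tolerance $\eta/5$ and the bound $Y_i\le K$ is in force, one obtains the corollary with $p_0 := \tfrac{2\eta}{5K}$. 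The main obstacle is the uniform bound $Y_i \le K$ of the second paragraph: the Markov-style conversion collapses without it, and establishing it requires carefully parsing the product structure of $\mathcal{L}_i^{\varLambda^{\#}}$ and using that $\theta_n^{\,\textbf{a}}$ lives on words of a common length with compact image under the two transversality coordinates of $\overline{\pi}$. Everything else is routine bookkeeping inside the framework already built in Sections~\ref{pre}--\ref{sec:partitionX}.
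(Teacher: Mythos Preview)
Your proposal is correct and follows essentially the same approach as the paper's own proof: both combine Lemmas~\ref{lem:thetanL0}, \ref{lem:thetanLcn}, and inequality~(\ref{eqn:thetadec}) to get a lower bound on the mean of $Y_i$, then use a uniform upper bound on $Y_i$ (the paper cites \cite[Lemma~6.1]{ren2021dichotomy} for the constant $L_2$ where you argue directly for $O(b^{3k})$ subcells) and a Markov-type inequality to pass to a probability estimate. The parameter choices differ only cosmetically.
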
	
\begin{proof}
	Let $\varDelta_3=\frac14(\frac {\log b}{\log(1/\gamma)}-\alpha)>0.$
	Combining (\ref{eqn:thetadec}) with Lemma \ref{lem:thetanL0} and Lemma \ref{lem:thetanLcn} we have
	\begin{equation}\label{EqELowBound}
	\mathbb{E}_{0\le i<Cn}^{\theta_n^{\,\textbf{a}}} \left[\frac{1}{k}H((\theta_n^{\textbf{a}})_{\textbf{w},i}, \mathcal{L}_{i+k}^{\varLambda^{\#}})\right]\ge \varDelta_3
	\end{equation}
	when  $n> C_0(\varDelta_3)k$ and $n\in M$ is large enough.
	Also  there is constant $L_2=L_2(\phi,\gamma)>0$ such that
	each element of $\mathcal{L}_{i}^{\varLambda^{\#}}$ contains at most $b^{L_2k}$ elements of $\mathcal{L}_{i+k}^{\varLambda^{\#}}$ (See \cite[Lemma 6.1]{ren2021dichotomy} for the idea of the proof).	 Therefore for any $\textbf{w}\in\text{supp}(\theta_n^{\,\textbf{a}})$, we have
	$$\frac{1}{k}H((\theta_n^{\,\textbf{a}})_{\textbf{w},i}, \mathcal{L}_{i+k}^{\varLambda^{\#}})\le L_2\quad\quad\forall\,0\le i< Cn.$$
	 Combining this with (\ref{EqELowBound}) our claim holds.
\end{proof}

\subsection{The lower bound estimate of the conditon entropy of $B_{\textbf{q}}(\theta^{\,\textbf{u}}_n)$}
The following Lemma \ref{lemTrivialLowBound} is a trivial lower bound estimate of condition entropy by concavity of conditional entropy. Recall $\textbf{q}(0)=\frac{q_1+q_2b+\dots+q_{\hat{i}}\,b^{\hat{i}-1}}{b^{\hat{i}}}$ for each $i\in\mathbb{Z}_+$ and $\textbf{q}\in\varLambda^{\hat{i}}$. The following $B_{\textbf{q}}(\theta^{\,\textbf{u}}_n)$ is defined by (\ref{measure2}).
 \begin{lemma}\label{lemTrivialLowBound}
 	For any $\eps>0$,
 	if $k,\,n,\,i$ are positive integers with $\hat{n}>t,i> C_2k$ and $k\ge K_4(\varepsilon)$, then for any $\textbf{u}\in\varLambda^t$ and $\textbf{q}\in\varLambda^{\hat{i}}$, we have
 	$$\frac1k H(B_{\textbf{q}}(\theta^{\,\textbf{u}}_n),\,\mathcal{L}_{i+k+n}|\mathcal{L}_{i+n})\ge\frac1k H(m_{\textbf{q}(0)},\,\mathcal{L}_{k})-\varepsilon$$
 	where $C_2=2\log\gamma^{-1}/\log b$.
 \end{lemma}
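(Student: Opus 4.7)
The plan is to decompose $B_{\textbf{q}}(\theta_n^{\,\textbf{u}})$ as an average of affine images of fibre measures and estimate each term separately. First, I will apply the cocycle identity (\ref{F}) to the word $\textbf{w}\textbf{u}\textbf{q}$ of length $\hat n + \hat i$ to obtain, for every $\textbf{j}\in\Sigma$,
\[
S(x_0,\textbf{w}\textbf{u}\textbf{q}\textbf{j}) \;=\; S(x_0,\textbf{w}\textbf{u}\textbf{q}) + \gamma^{\hat n+\hat i}\, S\bigl((\textbf{w}\textbf{u}\textbf{q})(x_0),\textbf{j}\bigr).
\]
Setting $f_{\textbf{w}}(y) := \gamma^{\hat n+\hat i}y + S(x_0,\textbf{w}\textbf{u}\textbf{q})$, this combined with (\ref{measure2}) and (\ref{measure3}) gives
\[
B_{\textbf{q}}(\theta_n^{\,\textbf{u}}) \;=\; \frac{1}{b^{\hat n-t}}\sum_{\textbf{w}\in \varLambda^{\hat n-t}} f_{\textbf{w}}\bigl(m_{(\textbf{w}\textbf{u}\textbf{q})(x_0)}\bigr).
\]
Concavity of conditional entropy (Lemma~\ref{lem:concave}) then reduces the claim to a uniform-in-$\textbf{w}$ lower bound on $H\bigl(f_{\textbf{w}}(m_{(\textbf{w}\textbf{u}\textbf{q})(x_0)}),\, \mathcal{L}_{i+k+n}\,\big|\, \mathcal{L}_{i+n}\bigr)$.

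Next, I note that each such summand is supported in an interval of diameter at most $2\|\phi\|_\infty(1-\gamma)^{-1}\gamma^{\hat n+\hat i}$, which by (\ref{eqn:n'}) is $O(b^{-(n+i)})$; hence it meets only boundedly many atoms of $\mathcal{L}_{i+n}$ and the conditioning removes only $O(1)$ from the unconditional entropy. Since $[\log_b\gamma^{\hat n+\hat i}] = -(n+i)+O(1)$ again by (\ref{eqn:n'}), Lemma~\ref{lem:affinetransform} applied to the affine map $f_{\textbf{w}}$ yields
\[
H\bigl(f_{\textbf{w}}(m_{(\textbf{w}\textbf{u}\textbf{q})(x_0)}),\,\mathcal{L}_{i+k+n}\bigr) \;\geq\; H\bigl(m_{(\textbf{w}\textbf{u}\textbf{q})(x_0)},\,\mathcal{L}_{k}\bigr)-O(1).
\]

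Finally, I will compare $m_{(\textbf{w}\textbf{u}\textbf{q})(x_0)}$ with the target measure $m_{\textbf{q}(0)}$. A short computation from (\ref{eq:text}) gives $(\textbf{w}\textbf{u}\textbf{q})(x_0) = \textbf{q}(0) + b^{-\hat i}(\textbf{w}\textbf{u})(x_0)$, so $|(\textbf{w}\textbf{u}\textbf{q})(x_0) - \textbf{q}(0)| \leq b^{-\hat i}$. Since $\phi$ is Lipschitz, the map $y\mapsto S(y,\textbf{j})$ is Lipschitz uniformly in $\textbf{j}$, hence $\sup_{\textbf{j}\in\Sigma}|S_{(\textbf{w}\textbf{u}\textbf{q})(x_0)}(\textbf{j}) - S_{\textbf{q}(0)}(\textbf{j})| \leq L\, b^{-\hat i}$ for a constant $L$ depending only on $\phi$, $b$ and $\gamma$. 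The hypothesis $i > C_2 k$ with $C_2 = 2\log\gamma^{-1}/\log b$, combined with (\ref{eqn:n'}), forces $\hat i > 2k$, so once $k\geq K_4(\eps)$ is large enough this error is at most $b^{-k}$, and Lemma~\ref{lem:pfclose} yields $|H(m_{(\textbf{w}\textbf{u}\textbf{q})(x_0)},\mathcal{L}_k) - H(m_{\textbf{q}(0)},\mathcal{L}_k)| = O(1)$. Chaining the three estimates, averaging over $\textbf{w}$, and dividing by $k$ produces the desired bound with a total additive error of order $1/k$, which is $\leq\eps$ once $K_4(\eps)$ is chosen appropriately. The only slightly delicate point is the scale matching in the affine-transform step: the constant $C_2$ is calibrated precisely so that $\hat i$ exceeds $k$ by a factor of two, which is exactly what makes the Lipschitz comparison in the last step valid at scale $\mathcal{L}_k$.
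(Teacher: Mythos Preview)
Your proof is correct and follows essentially the same approach as the paper: decompose via concavity into the single-word pieces $B_{\textbf{q}}(\delta_{\textbf{w}\textbf{u}}) = f_{\textbf{w}}(m_{(\textbf{w}\textbf{u}\textbf{q})(x_0)})$, use the small-support bound to control the conditioning cost, and combine the affine-transform lemma with the Lipschitz estimate $|(\textbf{w}\textbf{u}\textbf{q})(x_0)-\textbf{q}(0)| \le b^{-\hat i}$ (where $\hat i > k$ under the hypothesis $i>C_2 k$) to reach $H(m_{\textbf{q}(0)},\mathcal{L}_k)+O(1)$. The only cosmetic difference is the order of the last two steps: the paper performs the Lipschitz comparison at scale $b^{-(n+i+k)}$ before applying the affine rescaling, whereas you rescale first and compare at scale $b^{-k}$.
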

\begin{proof}
	By the concavity of conditional entropy and the definition of $\theta^{\,\textbf{u}}_n$, 
	\begin{equation}\label{EqTrivialLow}
	\frac1k H(B_{\textbf{q}}(\theta^{\,\textbf{u}}_n),\,\mathcal{L}_{i+k+n}|\mathcal{L}_{i+n})\ge\frac1{b^{\hat{n}-t}}\sum_{\textbf{w}\in\varLambda^{\hat{n}-t}}\frac1k H(B_{\textbf{q}}(\delta_{\textbf{w}\textbf{u}}),\,\mathcal{L}_{i+k+n}|\mathcal{L}_{i+n}).
	\end{equation}
	Since the measure $B_{\textbf{q}}(\delta_{\textbf{w}\textbf{u}})$ is support in an interval of length $O(b^{-(i+n)})$ by (\ref{F}) and the definition of $B_{\textbf{q}}(\delta_{\textbf{w}\textbf{u}})$, we have
	\begin{equation}\label{EqTrivalO1}
	H(B_{\textbf{q}}(\delta_{\textbf{w}\textbf{u}}),\,\mathcal{L}_{i+n})=O(1).
	\end{equation}
	Also for any $\textbf{j}\in\Sigma$, we have
	$$S(x_0,\textbf{w}\textbf{u}\textbf{q}\,\textbf{j})-\big(S(x_0,\textbf{w}\textbf{u}\textbf{q})+\gamma^{\hat{n}+\hat{i}}S(\textbf{q}(0),\textbf{j})\big)=\gamma^{\hat{n}+\hat{i}}\big(S(\textbf{w}\textbf{u}\textbf{q}(x_0)\,\textbf{j})-S(\textbf{q}(0),\textbf{j})\big).$$
	Combining this with 
	$$|\textbf{w}\textbf{u}\textbf{q}(x_0)-\textbf{q}(0)|=O(b^{-\hat{i}})$$
	and $\hat{i}\ge \frac{\log b}{\log1/\gamma}i>\frac{\log b}{\log1/\gamma} C_2k>k$
	we have
	$$|	S(x_0,\textbf{w}\textbf{u}\textbf{q}\,\textbf{j})-f\circ S(\textbf{q}(0),\textbf{j}\big)|=O(b^{-(n+i+k)})$$
	where
	$f(s)=S(x_0,\textbf{w}\textbf{u}\textbf{q})+\gamma^{\hat{n}+\hat{i}}s,\,\forall s\in\mathbb{R}.$ Thus the following holds
	$$	H(B_{\textbf{q}}(\delta_{\textbf{w}\textbf{u}}),\,\mathcal{L}_{i+n+k})=	H(fm_{\textbf{q}(0)},\,\mathcal{L}_{i+n+k})+O(1)=H(m_{\textbf{q}(0)},\,\mathcal{L}_{k})+O(1)$$
	Combining this with (\ref{EqTrivalO1})
	we have
	$$H(B_{\textbf{q}}(\delta_{\textbf{w}\textbf{u}}),\,\mathcal{L}_{i+k+n}|\mathcal{L}_{i+n})=H(m_{\textbf{q}(0)},\,\mathcal{L}_{k})+O(1).$$
	Let  $k$ be large enough, thus the lemma holds by (\ref{EqTrivialLow}).
\end{proof}
	We shall  use the Hochman's criterion on entropy increase (See Theorem \ref{thm:hochmanentgrow}) to give the nontrivial lower bound  estimate of the entropy of $B_{\textbf{q}}(\theta^{\,\textbf{a}}_n)$ when $\textbf{h}<\textbf{q}$ or  $\textbf{h}'<\textbf{q}$ in Sect. \ref{proveA}. Thus we need the following result. For any measure $\mathbb{P}\in\mathscr{P}(\mathbb{R})$ and $r>0$, let 
	$$r\mathbb{P}:=h_r\mathbb{P}$$
	where $h_r(s)=rs,\,\forall s\in\mathbb{R}.$
 \begin{lemma}\label{lemlLowBound}
	For any $\varepsilon>0$, there exists $K_5(\eps)>0$ such that  when $\hat{n}>t,\,i> C_2k$ and $k\ge K_5(\varepsilon)$, the following holds.
	For any $\textbf{q}\in\varLambda^{\hat{i}}$ if $\textbf{h}<\textbf{q}$ or $\textbf{h}'<\textbf{q}$ , then
	$$\frac1k H(\,B_{\textbf{q}}(\theta^{\,\textbf{a}}_n),\,\mathcal{L}_{i+k+n}|\mathcal{L}_{i+n})\ge\mathbb{E}^{\theta^{\,\textbf{a}}_n}_i\bigg[\frac1k H(\bigg\{\gamma^{-(\hat{i}+\hat{n})}A_{\textbf{q}}\bigg((\theta^{\,\textbf{a}}_n)_{\textbf{w},i}\bigg)\bigg\}* m_{\textbf{q}(0)},\,\mathcal{L}_{k})\bigg]-\varepsilon$$
	where $C_2$ is from Lemma \ref{lemTrivialLowBound}.
\end{lemma}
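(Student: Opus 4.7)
The plan is to reduce to a single $\mathcal{L}_i^{\varLambda^{\#}}$-component of $\theta^{\,\textbf{a}}_n$ via concavity, identify the pushforward $B_{\textbf{q}}$ as an approximate convolution using the cocycle identity (\ref{F}), and then rescale by $\gamma^{-(\hat{n}+\hat{i})}$ to land at scale $\mathcal{L}_k$. Using the decomposition $\theta^{\,\textbf{a}}_n = \mathbb{E}_i^{\theta^{\,\textbf{a}}_n}[(\theta^{\,\textbf{a}}_n)_{\textbf{w},i}]$, the linearity of $B_{\textbf{q}}$, and concavity of conditional entropy (Lemma \ref{lem:concave}), it suffices to prove, for each $i$-component $\xi = (\theta^{\,\textbf{a}}_n)_{\textbf{w},i}$, the componentwise estimate
\[
\tfrac{1}{k}H(B_{\textbf{q}}(\xi),\mathcal{L}_{i+k+n}|\mathcal{L}_{i+n}) \ge \tfrac{1}{k}H\bigl((\gamma^{-(\hat{i}+\hat{n})}A_{\textbf{q}}(\xi))*m_{\textbf{q}(0)},\mathcal{L}_k\bigr)-\varepsilon,
\]
and then average over $i$-components.

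Fix such a component $\xi$, which is supported in one cell of $\mathcal{L}_i^{\varLambda^{\#}}$ consisting of words of the form $\textbf{v}\textbf{a}$ with $\textbf{v}\in\varLambda^{\hat{n}-t}$. By Lemma \ref{lem:constantR} the measure $B_{\textbf{q}}(\xi)$ sits in an interval of length at most $Rb^{-(n+i)}$, so $H(B_{\textbf{q}}(\xi),\mathcal{L}_{n+i}) = O(1)$ and the conditional entropy equals $H(B_{\textbf{q}}(\xi),\mathcal{L}_{n+i+k})$ up to an additive $O(1)$. The cocycle identity (\ref{F}) gives
\[
S(x_0,\textbf{v}\textbf{a}\textbf{q}\textbf{j}) = S(x_0,\textbf{v}\textbf{a}\textbf{q}) + \gamma^{\hat{n}+\hat{i}}S\bigl((\textbf{v}\textbf{a}\textbf{q})(x_0),\textbf{j}\bigr),
\]
and since $|(\textbf{v}\textbf{a}\textbf{q})(x_0)-\textbf{q}(0)| \le b^{-\hat{i}}$ and $S(\cdot,\textbf{j})$ is Lipschitz uniformly in $\textbf{j}$ (its Lipschitz constant is bounded by $\sum_{m\ge 1}\gamma^{m-1}\|\phi'\|_\infty/b^m < \infty$), replacing $S((\textbf{v}\textbf{a}\textbf{q})(x_0),\textbf{j})$ by $S(\textbf{q}(0),\textbf{j})$ produces a pointwise error of order $\gamma^{\hat{n}+\hat{i}}b^{-\hat{i}} = O(b^{-(n+i+\hat{i})})$. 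Since $\hat{i} \ge i\log_{1/\gamma}b \ge 2k$ (from $i > C_2 k$), this error is $O(b^{-(n+i+k)})$, so by Lemma \ref{lem:pfclose},
\[
H(B_{\textbf{q}}(\xi),\mathcal{L}_{n+i+k}) = H\bigl(A_{\textbf{q}}(\xi)*(\gamma^{\hat{n}+\hat{i}}m_{\textbf{q}(0)}),\mathcal{L}_{n+i+k}\bigr) + O(1).
\]
As scaling commutes with convolution and $\log_b\gamma^{-(\hat{n}+\hat{i})} = n+i+O(1)$, one further application of Lemma \ref{lem:affinetransform} then transfers the right-hand side to $H((\gamma^{-(\hat{n}+\hat{i})}A_{\textbf{q}}(\xi))*m_{\textbf{q}(0)},\mathcal{L}_k)$ at the cost of another $O(1)$. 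Chaining these bounds and choosing $K_5(\varepsilon)$ large enough that the accumulated $O(1)/k$ is less than $\varepsilon$ for $k\ge K_5(\varepsilon)$ completes the componentwise step.

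The main technical obstacle I anticipate is ensuring that all the $O(1)$ constants are genuinely uniform in $n,i,k,\textbf{q}$ and the component $\xi$. Uniformity of $R$ follows from Lemma \ref{lem:constantR}; uniformity of the Lipschitz constant of $S(\cdot,\textbf{j})$ follows from the convergence of $\sum\gamma^{m-1}b^{-m}$; and the constant in Lemma \ref{lem:affinetransform} only absorbs the bounded offset $\log_b\gamma^{-(\hat{n}+\hat{i})}-(n+i)\in[0,2\log_b(1/\gamma))$.
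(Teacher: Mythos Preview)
Your proposal is correct and follows essentially the same route as the paper: decompose $\theta^{\,\textbf{a}}_n$ into its $\mathcal{L}_i^{\varLambda^{\#}}$-components via concavity, use Lemma~\ref{lem:constantR} to bound $H(B_{\textbf{q}}(\xi),\mathcal{L}_{i+n})=O(1)$, use the cocycle identity and the estimate $|(\textbf{v}\textbf{a}\textbf{q})(x_0)-\textbf{q}(0)|\le b^{-\hat i}$ together with $\hat i>k$ to replace $B_{\textbf{q}}(\xi)$ by the convolution $A_{\textbf{q}}(\xi)*(\gamma^{\hat n+\hat i}m_{\textbf{q}(0)})$ up to $O(1)$ in $\mathcal{L}_{i+n+k}$-entropy, and then rescale. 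Your explicit check of uniformity of the $O(1)$ constants is a welcome addition; note also that, as in the paper's argument, the hypothesis $\textbf{h}<\textbf{q}$ or $\textbf{h}'<\textbf{q}$ is not actually invoked in the proof itself.
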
	
Recall that $A_{\textbf{q}}\big((\theta^{\,\textbf{a}}_n)_{\textbf{w},i}\big)$ is defined by (\ref{measure}) and $B_{\textbf{q}}(\theta^{\,\textbf{a}}_n)$ is defined by (\ref{measure2}).
\begin{proof}
	By concavity of conditional entropy we have
	\begin{equation}\label{EqEntropyLow}
	\frac1k H(B_{\textbf{q}}(\theta^{\,\textbf{a}}_n),\,\mathcal{L}_{i+k+n}|\mathcal{L}_{i+n})\ge\mathbb{E}^{\theta^{\,\textbf{a}}_n}_i\bigg[\frac1k H\bigg(B_{\textbf{q}}\big((\theta^{\textbf{\,a}}_n)_{\textbf{w},i }\big),\,\mathcal{L}_{i+k+n}|\mathcal{L}_{i+n}\bigg)\bigg].
	\end{equation}
	For each $\textbf{w}\in\text{supp}(\theta^{\,\textbf{a}}_n)$, let $\varPsi=(\theta^{\,\textbf{a}}_n)_{\textbf{w},i}$.
	\begin{claim}\label{claim1}
		\begin{equation}\label{Eqright}
		H(B_{\textbf{q}}(\varPsi),\,\mathcal{L}_{i+n})=O(1).
		\end{equation}
	\end{claim}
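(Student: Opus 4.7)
My plan is to prove Claim~\ref{claim1} as a direct consequence of the diameter bound furnished by Lemma~\ref{lem:constantR}. Since $\varPsi = (\theta^{\,\textbf{a}}_n)_{\textbf{w},i}$ is a conditional measure of $\theta^{\,\textbf{a}}_n$ on some atom $Q$ of the partition $\mathcal{L}_i^{\varLambda^{\#}}$, its support is contained in $Q$. By the very construction of $\theta^{\,\textbf{a}}_n$ (see (\ref{measure3})) every word appearing in $\mathrm{supp}(\varPsi)$ has the form $\textbf{w}'\textbf{a}$ for some $\textbf{w}'\in\varLambda^{\hat{n}-t}$, and in particular satisfies $|\textbf{w}'\textbf{a}|=\hat{n}$ and $I_{\textbf{w}'\textbf{a}}\subset I_{\textbf{a}}$. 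These are precisely the hypotheses required to invoke Lemma~\ref{lem:constantR}.

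Next I would verify that the parameter $\textbf{q}$ in the claim satisfies $|\textbf{q}|=\hat{i}$, so that the conclusion of Lemma~\ref{lem:constantR} applies: this is immediate from the standing hypothesis of Lemma~\ref{lemlLowBound}, and the further assumption $\textbf{h}<\textbf{q}$ or $\textbf{h}'<\textbf{q}$ is consistent with $|\textbf{h}|=|\textbf{h}'|=t\le \hat{i}$ (which holds once $k$ and hence $i$ are sufficiently large). Applying the lemma, for any two $\textbf{u}\textbf{a},\,\textbf{v}\textbf{a}\in Q\cap\mathrm{supp}(\varPsi)$ and any $\textbf{j},\textbf{j}'\in\Sigma$,
$$
\bigl|S(x_0,\textbf{u}\textbf{a}\textbf{q}\textbf{j})-S(x_0,\textbf{v}\textbf{a}\textbf{q}\textbf{j}')\bigr|\le R\,b^{-(n+i)}.
$$

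From the definition (\ref{measure2}), the support of $B_{\textbf{q}}(\varPsi)$ is exactly the set of values $S(x_0,\textbf{w}'\textbf{q}\textbf{j})$ with $\textbf{w}'\in\mathrm{supp}(\varPsi)$ and $\textbf{j}\in\Sigma$. The above inequality therefore shows that $\mathrm{supp}\bigl(B_{\textbf{q}}(\varPsi)\bigr)$ lies in a single interval of length at most $R\,b^{-(n+i)}$. Such an interval meets at most $\lceil R\rceil+2$ atoms of the partition $\mathcal{L}_{i+n}$ of $\mathbb{R}$ into $b$-adic intervals of level $i+n$, and consequently
$$
H\bigl(B_{\textbf{q}}(\varPsi),\,\mathcal{L}_{i+n}\bigr)\le \log_b(\lceil R\rceil+2),
$$
which is an absolute constant depending only on $\phi,\,b,\,\gamma$ (through $R$). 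This gives the claim.

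I do not foresee any real obstacle here: the entire content of Claim~\ref{claim1} is that, after the geometric cancellations built into the definition of the partition $\mathcal{L}_i^{\varLambda^{\#}}$ via the map $\overline{\pi}$, every conditional measure $B_{\textbf{q}}(\varPsi)$ is essentially concentrated in a single $\mathcal{L}_{i+n}$-cell, and Lemma~\ref{lem:constantR} was set up precisely to encode this fact quantitatively. The only minor point to take care of is to remember that the elements of $\mathrm{supp}(\varPsi)$ all end in $\textbf{a}$, so that Lemma~\ref{lem:constantR} is indeed applicable; this is automatic from~(\ref{measure3}).
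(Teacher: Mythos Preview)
Your proposal is correct and follows essentially the same argument as the paper: both proofs invoke Lemma~\ref{lem:constantR} to show that $B_{\textbf{q}}(\varPsi)$ is supported in an interval of length $O(b^{-(n+i)})$, from which the entropy bound is immediate. Your write-up is slightly more detailed in checking the hypotheses of Lemma~\ref{lem:constantR} and in making the constant explicit, but the content is identical.
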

	For each $\textbf{w}',\,\textbf{w}''\in\text{supp}(\varPsi)$ and $\textbf{j}',\,\textbf{j}''\in\Sigma,$ we have
	$$|S(x,\textbf{w}'\,\textbf{q}\,\textbf{j}')-S(x,\textbf{w}''\,\textbf{q}\,\textbf{j}'')|\le R b^{-(n+i)}$$
	by Lemma \ref{lem:constantR}, which implies $B_{\textbf{q}}(\varPsi) $ is supported in an interval of length $O(b^{-(i+n)}).$
	Thus the claim holds.
	\begin{claim}\label{cla:convolution}
		$$	H(B_{\textbf{q}}(\varPsi),\,\mathcal{L}_{i+n+k})=H\big(A_{\textbf{q}}(\varPsi)*[\gamma^{\hat{n}+\hat{i}} m_{\textbf{q}(0)}],\,\mathcal{L}_{i+n+k}\big)+O(1).$$
	\end{claim}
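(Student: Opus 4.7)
The plan is to use the basic splitting identity (\ref{F}) to rewrite $S(x_0,\textbf{w}\textbf{q}\textbf{j})$ as the sum of a $\textbf{w}$-measurable ``base'' and a tail which is nearly $\textbf{w}$-independent, and then to quote Lemma \ref{lem:pfclose} to pass to the model measure $A_{\textbf{q}}(\varPsi)*[\gamma^{\hat n+\hat i}m_{\textbf{q}(0)}]$. Concretely, since every $\textbf{w}\in\mathrm{supp}(\varPsi)$ has length $\hat n$, two applications of (\ref{F}) give
\[
S(x_0,\textbf{w}\textbf{q}\textbf{j})=S(x_0,\textbf{w}\textbf{q})+\gamma^{\hat n+\hat i}\,S(\textbf{w}\textbf{q}(x_0),\textbf{j}),
\]
so $B_{\textbf{q}}(\varPsi)$ is the push-forward of $\varPsi\times\nu^{\mathbb{Z}_+}$ under the map $(\textbf{w},\textbf{j})\mapsto S(x_0,\textbf{w}\textbf{q})+\gamma^{\hat n+\hat i}S(\textbf{w}\textbf{q}(x_0),\textbf{j})$.

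Next I would replace $\textbf{w}\textbf{q}(x_0)$ by $\textbf{q}(0)$ in the second summand, which is the move that produces an \emph{independent} decomposition. From (\ref{eq:text}) it is immediate that $|\textbf{w}\textbf{q}(x_0)-\textbf{q}(0)|=O(b^{-\hat i})$, and since $\phi$ is Lipschitz the map $x\mapsto S(x,\textbf{j})$ is Lipschitz with a constant uniform in $\textbf{j}\in\Sigma$. Hence
\[
\gamma^{\hat n+\hat i}\bigl|S(\textbf{w}\textbf{q}(x_0),\textbf{j})-S(\textbf{q}(0),\textbf{j})\bigr|=O\!\bigl(\gamma^{\hat n+\hat i}b^{-\hat i}\bigr)=O\!\bigl(b^{-(n+i+\hat i)}\bigr).
\]
The standing hypothesis $i>C_2k$ with $C_2=2\log\gamma^{-1}/\log b$, together with the defining estimate $\gamma^{\hat i}\le b^{-i}$, forces $\hat i\ge i\log_{1/\gamma}b>2k$; in particular the last display is $O(b^{-(n+i+k)})$, which is exactly the tolerance needed at scale $\mathcal{L}_{n+i+k}$.

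Finally I would apply Lemma \ref{lem:pfclose} on the probability space $(\mathrm{supp}(\varPsi)\times\Sigma,\,\varPsi\times\nu^{\mathbb{Z}_+})$ to the two maps $(\textbf{w},\textbf{j})\mapsto S(x_0,\textbf{w}\textbf{q}\textbf{j})$ and $(\textbf{w},\textbf{j})\mapsto S(x_0,\textbf{w}\textbf{q})+\gamma^{\hat n+\hat i}S(\textbf{q}(0),\textbf{j})$, at scale $n+i+k$. By the product structure of $\varPsi\times\nu^{\mathbb{Z}_+}$, the two summands of the second map are independent, with marginals $A_{\textbf{q}}(\varPsi)$ (by definition (\ref{measure})) and $\gamma^{\hat n+\hat i}m_{\textbf{q}(0)}$ (since $m_{\textbf{q}(0)}=S_{\textbf{q}(0)}\nu^{\mathbb{Z}_+}$). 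Therefore the push-forward of the second map is precisely $A_{\textbf{q}}(\varPsi)*[\gamma^{\hat n+\hat i}m_{\textbf{q}(0)}]$, and Lemma \ref{lem:pfclose} yields the desired $O(1)$ identity.

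The only step that requires any real care is the error bound in the second paragraph: one must verify that the Lipschitz deviation of $S(\cdot,\textbf{j})$ over the interval covered by $\textbf{w}\textbf{q}(x_0)$, after multiplication by the contraction $\gamma^{\hat n+\hat i}$, actually fits within the target scale $b^{-(n+i+k)}$. This is exactly why the hypothesis is stated as $i>C_2k$ with the particular constant $C_2=2\log\gamma^{-1}/\log b$; any smaller constant would not absorb both the $\gamma^{\hat n+\hat i}\le b^{-(n+i)}$ factor and the extra $b^{-k}$ required to reach the conditioning scale.
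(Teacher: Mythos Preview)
Your proof is correct and follows essentially the same route as the paper: define the two maps $F,Q:\mathrm{supp}(\varPsi)\times\Sigma\to\mathbb{R}$, bound $|F-Q|$ by $O(b^{-(n+i+k)})$ using $|\textbf{w}\textbf{q}(x_0)-\textbf{q}(0)|=O(b^{-\hat i})$ and $\hat i>k$, then invoke Lemma~\ref{lem:pfclose} and identify the pushforward of $Q$ as the convolution. One minor remark: the paper only uses $\hat i>k$ here, so your closing comment that ``any smaller constant'' than $C_2$ would fail is a slight overstatement for this particular claim (half of $C_2$ would already give $\hat i>k$); the full strength of $C_2$ is used elsewhere.
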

    To prove the claim let us consider the map 
    $$F\,:\text{supp}(\varPsi)\times\Sigma\,\to\,\mathbb{R}\quad\quad(\textbf{w}',\,\textbf{j})\,\to\,S(x_0,\textbf{w}'\textbf{q}\,\textbf{j})$$
    and
    the map 
    $$Q\,:\text{supp}(\varPsi)\times\Sigma\,\to\,\mathbb{R}\quad\quad(\textbf{w}',\,\textbf{j})\,\to\,S(x_0,\textbf{w}'\textbf{q})+\gamma^{\hat{i}+\hat{n}}S(\textbf{q}(0),\textbf{j}).$$
    Since $|\textbf{q}(0)-\textbf{w}'\textbf{q}(x_0)|=O(b^{-\hat{i}})$ and $\hat{i}>k$, we have
    $$|F(\textbf{w}',\,\textbf{j})-Q(\textbf{w}',\,\textbf{j})|=O(b^{-(n+i+k)}),$$
    which implies that
    $$H(F(\varPsi\times\nu^{\mathbb{Z}_+}),\,\mathcal{L}_{i+n+k})=H(Q(\varPsi\times\nu^{\mathbb{Z}_+}),\,\mathcal{L}_{i+n+k})+O(1).$$
    Combining this with
    $$H(F(\varPsi\times\nu^{\mathbb{Z}_+}),\,\mathcal{L}_{i+n+k})=H(B_{\textbf{q}}(\varPsi),\,\mathcal{L}_{i+n+k})$$
    and
    $$H(Q(\varPsi\times\nu^{\mathbb{Z}_+}),\,\mathcal{L}_{i+n+k})=H(A_{\textbf{q}}(\varPsi)*[\gamma^{\hat{n}+\hat{i}} m_{\textbf{q}(0)}],\,\mathcal{L}_{i+n+k}),$$
   the claim holds.
   
    Finally  Claim \ref{claim1} and Claim \ref{cla:convolution}  imply that
    $$	H(B_{\textbf{q}}(\varPsi),\,\mathcal{L}_{i+n+k}|\mathcal{L}_{i+n})=H([\gamma^{-(\hat{n}+\hat{i})}A_{\textbf{q}}(\varPsi)]* m_{\textbf{q}(0)},\,\mathcal{L}_{k})+O(1).$$
    Combining this with (\ref{EqEntropyLow}) the lemma holds when k is large enough.
\end{proof}

\subsection{The proof of Theorem A}\label{proveA}
The following Theorem \ref{thm:hochmanentgrow} is a version of Hochman's entropy increasing criterion, see  ~\cite[Theorem 2.8]{hochman2014self} and~\cite[Theorem 4.1]{barany2019hausdorff}.
\begin{theorem}[Hochman]\label{thm:hochmanentgrow}
	For any $\varepsilon>0$ and $m\in\mathbb{Z}_+$ there exists $\delta=\delta(\varepsilon,m)>0$ such that for
	$k>K(\varepsilon,\delta,m)$, $n\in\mathbb{N}$, and $\tau,\theta\in\boldsymbol{\mathscr{P}}(\mathbb{R})$,
	if
	\begin{enumerate}
		\item[(1)] $\text{diam}(\text{supp}(\tau)),\text{diam}(\text{supp}(\theta))\le b^{-n}$,
		\item[(2)] $\tau$ is $(1-\varepsilon,\frac{\varepsilon}2,m)$-entropy porous from scales $n$ to $n+k$,
		\item [(3)]$\frac{1}{k}H(\theta,\mathcal{L}_{n+k})>\varepsilon$,
	\end{enumerate}	
	then
	$$\frac{1}{k}H(\theta\ast\tau,\mathcal{L}_{n+k})\ge\frac{1}{k}H(\tau,\mathcal{L}_{n+k})+\delta.$$ 
\end{theorem}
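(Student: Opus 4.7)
The plan is to adapt Hochman's multi-scale entropy increase machinery from~\cite{hochman2014self}. The main idea is that entropy porosity of $\tau$ forces, at a positive-density set of scales $i \in [n, n+k]$, that the component $\tau_{x,i}$ has small entropy density at scale $m$; at each such scale, convolving with $\theta$ (which has uniformly positive entropy density across the whole range) must increase the entropy by a definite amount. Adding up the gains across scales yields the desired $\delta > 0$ improvement.

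First, I would use the telescoping identity
\[
\tfrac{1}{k}H(\theta * \tau, \mathcal{L}_{n+k}) - \tfrac{1}{k}H(\tau, \mathcal{L}_{n+k}) \;=\; \mathbb{E}^{\tau}_{n \le i < n+k}\left[\tfrac{1}{m}\bigl(H(\theta * \tau_{x,i}, \mathcal{L}_{i+m}) - H(\tau_{x,i}, \mathcal{L}_{i+m})\bigr)\right] \;+\; O(m/k)
\]
(up to boundary and averaging errors, cf.\ the proof of Lemma~\ref{lem:thetadec}), where both sides are compared with $O(1)$ error terms absorbed into the $m/k$ correction by taking $k$ much larger than $m$. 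This reduces the global entropy increase to a localized one-scale statement for the components of $\tau$ convolved with (approximately translated rescalings of) $\theta$. Similarly one decomposes $\theta$ into components and reduces the one-scale problem to bounding $\tfrac{1}{m} H(\theta_{x,i} * \tau_{y,i}, \mathcal{L}_{i+m}) - \tfrac{1}{m} H(\tau_{y,i}, \mathcal{L}_{i+m})$ from below.

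The heart of the argument is a one-scale lemma: for any $\varepsilon > 0, m \in \mathbb{Z}_+$, there exists $\delta' = \delta'(\varepsilon, m) > 0$ such that whenever $\tfrac{1}{m} H(\tau_{y,i}, \mathcal{L}_{i+m}) < 1 - \varepsilon/2$ and $\tfrac{1}{m} H(\theta_{x,i}, \mathcal{L}_{i+m}) > \varepsilon/2$, one has $\tfrac{1}{m} H(\theta_{x,i} * \tau_{y,i}, \mathcal{L}_{i+m}) \ge \tfrac{1}{m} H(\tau_{y,i}, \mathcal{L}_{i+m}) + \delta'$. This is precisely Hochman's inverse-theorem for entropy applied to a single scale: it is proved by contradiction, invoking the structural statement that near-equality in $H(\theta * \tau, \mathcal{L}_{i+m}) \approx H(\tau, \mathcal{L}_{i+m})$ forces $\tau$ to look close to uniform on $b$-adic cells at scale $m$, contradicting the porosity hypothesis $\tfrac{1}{m}H(\tau_{y,i}, \mathcal{L}_{i+m}) < 1-\varepsilon/2$. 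The main obstacle is proving exactly this inverse theorem at a single scale in a quantitative form with explicit $\delta'$ depending only on $\varepsilon$ and $m$; this requires either the Plünnecke--Ruzsa style entropy inequalities combined with a compactness-plus-contradiction argument, or Hochman's direct combinatorial treatment of components whose convolution nearly saturates $H(\theta*\tau) \le \min(H(\theta) + \log |\mathrm{supp}(\tau)|, H(\tau) + \log |\mathrm{supp}(\theta)|)$.

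Finally, one combines these pieces: by entropy porosity, the event $\tfrac{1}{m} H(\tau_{x,i}, \mathcal{L}_{i+m}) < 1 - \varepsilon/2$ occurs with $\tau$-probability at least $1-\varepsilon/2$ over the sampled scales $i$, while the lower bound $\tfrac{1}{k} H(\theta, \mathcal{L}_{n+k}) > \varepsilon$ combined with the decomposition $\tfrac{1}{k}H(\theta, \mathcal{L}_{n+k}) = \mathbb{E}^{\theta}_{n \le i < n+k}\left[\tfrac{1}{m}H(\theta_{x,i},\mathcal{L}_{i+m})\right] + O(m/k)$ guarantees a positive-density set of $i$ where the $\theta$-component has entropy density $> \varepsilon/2$. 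On the overlap of these two collections of scales (still of positive density for $k$ large enough versus $m$), the one-scale lemma applies and contributes $\delta'$ to the averaged entropy increase, while on the remaining scales the trivial inequality $H(\theta * \tau_{x,i}) \ge H(\tau_{x,i}) - O(1)$ (for $\theta$ supported in a set of diameter $\le b^{-n} \le b^{-i}$) loses only $O(1/m)$. Choosing $\delta = \delta(\varepsilon,m)$ as a small constant multiple of $\delta'(\varepsilon, m)$ completes the proof.
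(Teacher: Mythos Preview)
The paper does not prove this theorem; it is stated as a black-box citation to~\cite[Theorem~2.8]{hochman2014self} and~\cite[Theorem~4.1]{barany2019hausdorff}. Your sketch is a reasonable outline of Hochman's original argument in those references: linearize the multi-scale entropy via the component decomposition of \emph{both} measures, reduce to a single-scale convolution estimate at scale $m$ (which holds by a compactness argument on probability vectors of length $O(b^m)$, since $H(\mu*\nu)>H(\mu)$ strictly whenever $\nu$ is non-Dirac and $\mu$ has bounded support), and sum the gains over the positive-density set of scales where the $\tau$-component is non-uniform and the $\theta$-component is non-atomic.

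One minor correction: your first displayed identity, with $\theta*\tau_{x,i}$ on the right, is not the decomposition that is actually used---one cannot keep $\theta$ whole while passing to components of $\tau$ and still control the error by $O(m/k)$. Hochman's lower bound for $\tfrac{1}{k}H(\theta*\tau,\mathcal{L}_{n+k})$ already involves the double-component convolution $\theta_{y,i}*\tau_{x,i}$, which you do reach in the next sentence. Also, in the final assembly you need to be slightly careful that the ``good'' scales for $\tau$ (porosity) and the ``good'' scales for $\theta$ (positive component entropy) overlap: this works because porosity gives $p_i^\tau>1-\varepsilon/2$ on average over $i$, so $p_i^\tau\ge 1/2$ for all but an $\varepsilon$-fraction of $i$, and the remaining $\theta$-mass on those scales is still bounded below by a constant depending only on $\varepsilon$.
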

 We have the following Lemma.
\begin{lemma}\label{lem:EntropyIncrease}
If $\alpha<\{1,\,\frac{\log b}{\log1/\gamma}\}$, then
there exists a constant $\varDelta_4=\varDelta_4(\alpha)>0$ such that the following holds.

 For any $\eps>0$ and positive integers $n,\,i,\,k$ when $\hat{n}>t,$  $k\ge K_6=K_6(\eps,\varDelta_4,\alpha)$ and $i\ge I_1=I_1(\eps,\varDelta_4,\alpha,k)$, we have
\begin{equation}\label{EqEntropyIncrease}
\begin{aligned}
&\frac1{b^{\hat{i}-t}}\sum_{\textbf{q}\in\varLambda^{\hat{i}-t}}\bigg[\frac1k H(B_{\textbf{h}\textbf{q}}(\theta^{\,\textbf{a}}_n),\,\mathcal{L}_{i+k+n}|\mathcal{L}_{i+n})+\frac1k H(B_{\textbf{h}'\textbf{q}}(\theta^{\,\textbf{a}}_n),\,\mathcal{L}_{i+k+n}|\mathcal{L}_{i+n})\bigg]\\
\\&\ge \varDelta_4 P_{i,k}+ \frac1{b^{\hat{i}-t}}\sum_{\textbf{q}\in\varLambda^{\hat{i}-t}}\bigg[\frac1k H(m_{\textbf{h}\textbf{q}(0)},\,\mathcal{L}_{k})+\frac1k H(m_{\textbf{h}'\textbf{q}(0)},\,\mathcal{L}_{k})\bigg]-\eps
\end{aligned}
\end{equation}

where $\varDelta_2>0$ is from Corollary \ref{p} and
$$P_{i,\,k}=\mathbb{P}_{ i}^{\theta_n^{\,\textbf{a}}} \left[\frac{1}{k}H((\theta_n^{\,\textbf{a}})_{\textbf{w},i}, \mathcal{L}_{i+k}^{\varLambda^{\#}})>\varDelta_2\right].$$
\end{lemma}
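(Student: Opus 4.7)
The plan is to combine Lemma \ref{lemlLowBound}, Lemma \ref{lem:KeyPointLow}, Theorem \ref{thm:entporous}, and Hochman's entropy increase criterion (Theorem \ref{thm:hochmanentgrow}). After applying Lemma \ref{lemlLowBound} to both $B_{\textbf{h}\textbf{q}}(\theta^{\,\textbf{a}}_n)$ and $B_{\textbf{h}'\textbf{q}}(\theta^{\,\textbf{a}}_n)$, the left-hand side of (\ref{EqEntropyIncrease}) is bounded below (up to an additive $\eps/3$) by an expectation over components $\xi = (\theta^{\,\textbf{a}}_n)_{\textbf{w},i}$ of the entropies of
\[
\gamma^{-(\hat{i}+\hat{n})} A_{\textbf{h}\textbf{q}}(\xi) * m_{\textbf{h}\textbf{q}(0)} \quad\text{and}\quad \gamma^{-(\hat{i}+\hat{n})} A_{\textbf{h}'\textbf{q}}(\xi) * m_{\textbf{h}'\textbf{q}(0)}
\]
at scale $k$. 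By Lemma \ref{lem:constantR} the measures $A_{\textbf{h}\textbf{q}}(\xi)$ are supported in intervals of diameter $O(b^{-(i+n)})$, and since $\gamma^{-(\hat{i}+\hat{n})} \asymp b^{i+n}$, the rescaled factors all lie in a uniformly bounded interval; Hochman's support hypothesis then holds at a bounded top scale $n_0$.

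Set $\eta = (1-\alpha)/4 > 0$, which is positive by the assumption $\alpha < 1$. Pick $m \ge M_1(\eta)$ and let $\delta_0 = \delta(\eta,m) > 0$ be supplied by Hochman's criterion; take $\Delta_4 = \delta_0/4$. For $k$ and $\hat{i}$ sufficiently large, Theorem \ref{thm:entporous} (applied to the fixed prefixes $\textbf{h}, \textbf{h}' \in \varLambda^t$, with a union bound losing only a factor $b^t$) produces a good set $\mathcal{Q} \subset \varLambda^{\hat{i}-t}$ of $\nu^{\hat{i}-t}$-mass at least $1 - \eps/10$, on which both $m_{\textbf{h}\textbf{q}(0)}$ and $m_{\textbf{h}'\textbf{q}(0)}$ are $(\alpha,\eta,m)$-entropy porous from scale $1$ to $k$. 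Since $\alpha < 1-\eta$, this upgrades to $(1-\eta,\eta/2,m)$-entropy porosity, which is exactly what Hochman's criterion requires at the rescaled top scale $n_0$.

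Now fix $\textbf{q} \in \mathcal{Q}$ and a component $\xi$ with $\frac{1}{k} H(\xi, \mathcal{L}_{i+k}^{\varLambda^{\#}}) > \Delta_2$ (an event of total $\theta^{\,\textbf{a}}_n$-mass $P_{i,k}$). Lemma \ref{lem:KeyPointLow} gives
\[
\frac{1}{k} H(A_{\textbf{h}\textbf{q}}(\xi), \mathcal{L}_{i+k+n}) + \frac{1}{k} H(A_{\textbf{h}'\textbf{q}}(\xi), \mathcal{L}_{i+k+n}) \ge \Delta_2 - L_1/k,
\]
so at least one summand is $\ge \Delta_2/4$ for $k$ large. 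Lemma \ref{lem:affinetransform} transfers this to an entropy lower bound $\ge \eta$ at scale $k$ for the corresponding rescaled measure $\gamma^{-(\hat{i}+\hat{n})} A_{\cdot\textbf{q}}(\xi)$. Hochman's criterion then yields growth $\delta_0$ in that convolution, while the trivial convolution bound gives the other at least $\frac{1}{k}H(m_{\cdot\textbf{q}(0)},\mathcal{L}_k) - O(1/k)$. Summing, the total contribution of each such good pair $(\textbf{q},\xi)$ exceeds $\frac{1}{k}H(m_{\textbf{h}\textbf{q}(0)},\mathcal{L}_k) + \frac{1}{k}H(m_{\textbf{h}'\textbf{q}(0)},\mathcal{L}_k)$ by at least $\delta_0 - O(1/k) \ge 2\Delta_4$.

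Averaging over $\textbf{w}$ (good components contribute mass $P_{i,k}$) and over $\textbf{q}$ (using $\mathcal{Q}$ for the good contribution and Lemma \ref{lemTrivialLowBound} on its complement to avoid a collapse of the right-hand side) produces (\ref{EqEntropyIncrease}) with $\Delta_4 = \delta_0/4$, once $k \ge K_6$ and $i \ge I_1$ are taken large enough to absorb all error terms into $\eps$. The main technical obstacle will be the scale bookkeeping: verifying that the entropy porosity of $m_y$ supplied by Theorem \ref{thm:entporous} at scales $1$ through $k$ matches, after the affine rescaling by $\gamma^{-(\hat{i}+\hat{n})}$ and a bounded translation, the scale range $n_0$ to $n_0 + k$ required by Hochman's hypotheses, and that the assertion "at least one of $A_{\textbf{h}\textbf{q}}, A_{\textbf{h}'\textbf{q}}$ has significant entropy at scale $k$" survives passage through these rescalings.
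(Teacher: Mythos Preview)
Your approach is essentially the paper's: apply Lemma~\ref{lemlLowBound} to pass to convolutions, use Lemma~\ref{lem:KeyPointLow} on high-entropy components $\xi=(\theta_n^{\,\textbf{a}})_{\textbf{w},i}$ to force one of $A_{\textbf{h}\textbf{q}}(\xi),A_{\textbf{h}'\textbf{q}}(\xi)$ to have entropy $\ge\varDelta_2/4$ at the right scale, invoke entropy porosity of $m_{\textbf{h}\textbf{q}(0)},m_{\textbf{h}'\textbf{q}(0)}$ via Theorem~\ref{thm:entporous}, and feed everything into Hochman's criterion.

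One bookkeeping point deserves attention. You ask that the good set $\mathcal{Q}$ have $\nu^{\hat{i}-t}$-mass $\ge 1-\eps/10$; but extracting this from Theorem~\ref{thm:entporous} forces $m\ge M_1(\eps')$ for some $\eps'$ depending on $\eps$, hence $\delta_0=\delta(\eta,m)$ and $\varDelta_4=\delta_0/4$ would depend on $\eps$, contradicting the quantifier order in the lemma (``there exists $\varDelta_4(\alpha)$ \emph{such that} for any $\eps$\dots''). The paper sidesteps this by fixing a single parameter $\eps_4=\min\{(1-\alpha)/4,\,\varDelta_2/8,\,1/(16b^t)\}$ independent of $\eps$, taking $m=M_1(\eps_4)$, and accepting only mass $>1/2$ for the good set; the factor $1/2$ is then absorbed into $\varDelta_4=\varDelta_5/2$. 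Note also that the porosity/entropy parameter fed to Hochman must satisfy $\eta\le\varDelta_2/4$ so that hypothesis~(3) of Theorem~\ref{thm:hochmanentgrow} is met by your lower bound; this is why the paper includes $\varDelta_2/8$ in $\eps_4$, whereas your choice $\eta=(1-\alpha)/4$ alone need not suffice. With these two adjustments your argument and the paper's coincide.
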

\begin{proof}
	By Lemma \ref{lemlLowBound},
	if $k,\,n,\,i$ are positive integers with $\hat{n}>t,\,i> C_2k$ and $k\ge K_5(\eps/4)$, for any $\textbf{q}\in\varLambda^{\hat{i}-t}$,  we have 
	\begin{equation}\label{EqQ1}
	\begin{aligned}
	&\frac1k H(B_{\textbf{h}\textbf{q}}(\theta^{\,\textbf{a}}_n),\,\mathcal{L}_{i+k+n}|\mathcal{L}_{i+n})+\frac1k H(B_{\textbf{h}'\textbf{q}}(\theta^{\,\textbf{a}}_n),\,\mathcal{L}_{i+k+n}|\mathcal{L}_{i+n})\\ &\ge
	\mathbb{E}^{\theta^{\,\textbf{a}}_n}_i\bigg[\frac1k H(\bigg\{\gamma^{-(\hat{i}+\hat{n})}A_{\textbf{h}\textbf{q}}\big((\theta^{\,\textbf{a}}_n)_{\textbf{w},i}\big)\bigg\}* m_{\textbf{h}\textbf{q}(0)},\,\mathcal{L}_{k})+\frac1k H(\bigg\{\gamma^{-(\hat{i}+\hat{n})}A_{\textbf{h}'\textbf{q}}\big((\theta^{\,\textbf{a}}_n)_{\textbf{w},i}\big)\bigg\}* m_{\textbf{h}'\textbf{q}(0)},\,\mathcal{L}_{k})\bigg]-\varepsilon/2.
	\end{aligned}
	\end{equation}
	
	For each $\textbf{w}\in\text{supp}(\theta^{\,\textbf{a}}_n)$, let 
	$\varPsi=(\theta^{\,\textbf{a}}_n)_{\textbf{w},i}$. For the case $\frac{1}{k}H(\varPsi, \mathcal{L}_{i+k}^{\varLambda^{\#}})>\varDelta_2,$
	by Lemma \ref{lem:KeyPointLow} we have
	$$\frac1k H\big(\varPsi,\mathcal{L}_{i+k}^{\varLambda^{\#}}\big)
	\le \frac1k H\big( \gamma^{-(\hat{i}+\hat{n})}A_{\textbf{h}\textbf{q}}(\varPsi),\mathcal{L}_{k}\big)+\frac1k H\big( \gamma^{-(\hat{i}+\hat{n})}A_{\textbf{h}'\textbf{q}}(\varPsi),\mathcal{L}_{k}\big)+O(\frac1k),$$
	thus
	 there exist $\textbf{h}^*\in\{\textbf{h},\,\textbf{h}'\}$ such that
	\begin{equation}\label{EqParticalEntropyLow}
	\frac1k H\big( \gamma^{-(\hat{i}+\hat{n})}A_{\textbf{h}^*\textbf{q}}(\varPsi),\mathcal{L}_{k}\big)
	\ge \varDelta_2/4
	\end{equation}
	when $k\ge K_7(\varDelta_2)>0$. 
	
		Let $\eps_4=\min\{\frac{1-\alpha}4, \frac{\varDelta_2}8,\frac1{16b^t}\}$.
	By Theorem \ref{thm:entporous}, let  $m= M_1(\eps_4).$ For any  $k\ge K_1(\eps_4,m)$ and $\hat{i}\ge t+N_1(\eps_4,m,k)$, thus we have
	\begin{equation}\label{eq:entropyporous}
	\nu^{\hat{i}-t}\bigg(\bigg\{\,\textbf{q}\in\varLambda^{\hat{i}-t}\,:\,
	\begin{matrix}
	m_{\textbf{h}\textbf{q}(0)},\,m_{\textbf{h}'\textbf{q}(0)}\text{ are } (1-\varepsilon_4, \eps_4/2, m)\\-\text{entropy porous from scale } 1 \text{ to } k
	\end{matrix}
	\bigg\}\bigg)>\frac12.
	\end{equation}
	Also  Lemma \ref{lem:constantR} implies that $A_{\textbf{h}^*\textbf{q}}(\varPsi) $ is supported in an interval of length $O(b^{-(i+n)})$.  Combining this with Theorem \ref{thm:hochmanentgrow} we have following property.
	
	 There exists $\varDelta_5=\delta(\eps_4,m)>0$ such that for any $k\ge K(\eps_4,\varDelta_5,m)$, if
	\begin{enumerate}
		\item[(1)] $m_{\textbf{h}^*\textbf{q}(0)}$ is $(1-\eps_4,\frac{\eps_4}2,m)$-entropy porous from scales $1$ to $k$;
		\item[(2)] 	$\frac1k H\big( \gamma^{-(\hat{i}+\hat{n})}A_{\textbf{h}^*\textbf{q}}(\varPsi),\mathcal{L}_{k}\big)
		\ge \varDelta_2/4$,
	\end{enumerate}
then
\begin{equation}\label{entropyincrease}
\frac1k H\big(\big\{\gamma^{-(\hat{i}+\hat{n})}A_{\textbf{h}^*\textbf{q}}(\varPsi)\big\}* m_{\textbf{h}^*\textbf{q}(0)},\,\mathcal{L}_{k}\big)\ge\frac1k H(m_{\textbf{h}^*\textbf{q}(0)},\,\mathcal{L}_{k})+\varDelta_5.
\end{equation}
Let $\varDelta_4=\varDelta_5/2$,
$K_6=\max\{K_5(\eps),\,K_7(\varDelta_2),\,K_1(\eps_4,\,m),\,K(\eps_4,\,\varDelta_5,\,m)\}$
and 
$I_1=kC_2(t+N_1(\eps_4,\,m,\,k)).$
 Thus
	Combining (\ref{entropyincrease}) with (\ref{EqQ1}), (\ref{EqParticalEntropyLow}) and (\ref{eq:entropyporous}), our claim holds.
	\end{proof}

	\begin{proof}[The proof of Theorem A]
		Assume that $\alpha<\min\{1,\,\frac{\log b}{\log1/\gamma}\}$.
	Let $p_0$ be the value in Corollary \ref{p}. $\varDelta_4$ is from Lemma \ref{lem:EntropyIncrease}. Denote
	$\eps=\frac{\varDelta_4 p_0}{16b^{2t}}$.
	 By (\ref{eqn:thetamudec}), Lemma \ref{lemTrivialLowBound} and  Lemma \ref{lem:EntropyIncrease}, if $k,\,n$ are positive integers with $n> \max\{C_0(\eps), C_2\}k$, $k\ge \max\{ K_4(\eps),\,K_6(\eps,\varDelta_4,\alpha)\}$ and  $n\ge I_1(\eps,\varDelta_2,\alpha,k)$, then we have
	\begin{equation}\label{large}
	\frac{1}{Cn} H(m_{x_0}, \mathcal{L}_{(C+1)n}|\mathcal{L}_{n})\ge \frac1{Cn}\sum_{i> I_1 }^{Cn-1}\frac1{b^{\hat{i}}}\sum_{\textbf{q}\in\varLambda^{\hat{i}}}\frac1k H(m_{\textbf{q}(0)},\,\mathcal{L}_{k})-4\varepsilon +\frac{\varDelta_4}{b^{2t} } \mathbb{P}_{k,Cn}
	\end{equation}
	where 
	$$\mathbb{P}_{k,Cn}=\mathbb{P}_{ I_1\le i< Cn}^{\theta_n^{\,\textbf{a}}} \left[\frac{1}{k}H((\theta_n^{\,\textbf{a}})_{\textbf{w},i}, \mathcal{L}_{i+k}^{\varLambda^{\#}})>\varDelta_2\right].$$
	Also by Lemma \ref{lem:boundsinm}, when k,n are large enough we have
	\begin{equation}\label{integer}
	\frac1{Cn}\sum_{i> I_1 }^{Cn-1}\frac1{b^{\hat{i}}} \sum_{\textbf{q}\in\varLambda^{\hat{i}}}\frac1k H(m_{\textbf{q}(0)},\,\mathcal{L}_{k})\ge \alpha-\eps.
	\end{equation}
	By Corollary \ref{p}, for $n\in M$ large enough, we have
	\begin{equation}\nonumber
	\mathbb{P}_{ I_1\le i< Cn}^{\theta_n^{\,\textbf{a}}} \left[\frac{1}{k}H((\theta_n^{\,\textbf{a}})_{\textbf{w},i}, \mathcal{L}_{i+k}^{\varLambda^{\#}})>\varDelta_2\right]\ge\frac{p_0}2.
	\end{equation}
	Combining this with (\ref{integer}) and (\ref{large}) we have
	\begin{equation}\label{A}
	\frac{1}{Cn} H(m_{x_0}, \mathcal{L}_{(C+1)n}|\mathcal{L}_{n})\ge \alpha+\eps
	\end{equation}
	for  $n\in M$ large enough. However, as $n\to\infty$, the left hand side of (\ref{A}) converges to $\alpha$, a contradiction!
\end{proof}

\section{Appendix: Proof of Theorem \ref{ledrappier-young}}
 Let us consider the bijection 
$$G:\Sigma\times\mathbb{R}\times [0,1)\to\Sigma\times\mathbb{R}\times [0,1)\quad\quad(\textbf{j},y,x)\to(\sigma\text{j},\frac{y-\phi(\frac{x+j_1}b)}{\gamma},\frac{x+j_1}b)$$
where $\sigma$ is the shift transformation on $\Sigma$, and the projection map
$$\Pi:\Sigma\times [0,1)\to\Sigma\times\mathbb{R}\times [0,1)\quad\quad (\textbf{j},x)\to\big(\textbf{j},S(x,\textbf{j}),x\big).$$
Let $\mu=\Pi(\nu^{\mathbb{Z}_+}\times\mathcal{m})$ and it is easy to see that the measure $\mu$ is invariant and ergodic with respect to $G$. 

For $n\in\mathbb{Z}_+$ define $\mathscr{P}^n=\big\{[\textbf{w}]\times\mathbb{R}\times [0,1):\textbf{w}\in\varLambda^n\big\}$. Let
$\mathscr{P}^n(\textbf{j},y,x)$ be the only element of $\mathscr{P}^n$ that contains $(\textbf{j},y,x)$ and $\mathscr{P}^0:=\Sigma\times\mathbb{R}\times [0,1)$. For any $x\in [0,1)$, let 
$$Y_x:\Sigma\to\Sigma\times\mathbb{R}\times [0,1)\quad\quad\textbf{j}\to\big(\textbf{j},S(x,\textbf{j}),x\big)$$
and 
$$\mu_x:=Y_x(\nu^{\mathbb{Z}_+}).$$

We shall recall a basic property of measures $\mu_x$ in this system.
\begin{lemma}\label{lem:A1}
	For every $(\textbf{j},y,x)\in\Sigma\times\mathbb{R}\times [0,1)$, $n\in\mathbb{N}$ and $r>0$, we have
	\begin{equation}
	\label{MeasureG}
	\mu_x\big(\boldsymbol{B}^T_{(\textbf{j},y,x)}(r)\cap\mathscr{P}^{n+1}(\textbf{j},y,x)\big)=\frac1b\mu_{\frac{x+j_1}b}\bigg(\boldsymbol{B}^T_{G(\textbf{j},y,x)}\bigg(\frac r{\gamma}\bigg)\cap\mathscr{P}^n\big(G(\textbf{j},y,x)\big)\bigg)
	\end{equation}
	where $\boldsymbol{B}^T_{(\textbf{j},y,x)}(r)=\bigg\{\,(\textbf{j}',y',x')\in\Sigma\times\mathbb{R}\times [0,1)\, :\,|y-y'|\le r\,\bigg\}.$
\end{lemma}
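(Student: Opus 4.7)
\medskip

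\noindent\textbf{Proof proposal.} The plan is a direct computation: unpack both sides using that $\mu_x = Y_x(\nu^{\mathbb{Z}_+})$ is concentrated on the graph of $S(x,\cdot)$, then invoke the self-similarity of $S$ under the shift $\sigma$.

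First, I would extract from the definition (\ref{S}) the one-step recursion
\[
S(x, \textbf{j}) \;=\; \phi\!\left(\frac{x+j_1}{b}\right) \;+\; \gamma\, S\!\left(\frac{x+j_1}{b},\; \sigma\textbf{j}\right), \qquad x\in[0,1),\ \textbf{j}\in\Sigma,
\]
obtained by isolating the $n=1$ summand of (\ref{S}) and reindexing the remainder via $m=n-1$. This identity is precisely what relates the map $G$ to $S$: writing $\tilde{x} := (x+j_1)/b$ and $y' := (y - \phi(\tilde{x}))/\gamma$, the triple $G(\textbf{j}, y, x) = (\sigma\textbf{j}, y', \tilde{x})$ is exactly the image point appearing on the right-hand side of (\ref{MeasureG}).

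Next, I would rewrite the left-hand side. Since $\mu_x$ is the pushforward of $\nu^{\mathbb{Z}_+}$ under the map $\textbf{j}' \mapsto (\textbf{j}', S(x,\textbf{j}'), x)$, a point lies in $\mathbf{B}^T_{(\textbf{j},y,x)}(r) \cap \mathscr{P}^{n+1}(\textbf{j}, y, x)$ if and only if $\textbf{j}'$ agrees with $\textbf{j}$ on the first $n+1$ symbols and $|S(x,\textbf{j}')-y|\le r$. Using the recursion (and $j'_1 = j_1$), the latter condition is equivalent to
\[
\bigl|\,S(\tilde{x},\ \sigma\textbf{j}') - y'\,\bigr| \;\le\; \frac{r}{\gamma},
\]
so the set whose $\nu^{\mathbb{Z}_+}$-measure we must compute depends on $\textbf{j}'$ only through its first symbol $j'_1$ and through $\sigma\textbf{j}'$.

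Finally, I would split off the first symbol. Fixing $j'_1 = j_1$ costs a factor of $1/b$ in $\nu^{\mathbb{Z}_+}$, and the product structure lets me recognize the residual event on $\sigma\textbf{j}'$ as exactly the $Y_{\tilde{x}}$-preimage of $\mathbf{B}^T_{G(\textbf{j},y,x)}(r/\gamma) \cap \mathscr{P}^{n}(G(\textbf{j},y,x))$: the first $n$ coordinates of $\sigma\textbf{j}'$ are required to equal $(\sigma\textbf{j})_k = j_{k+1}$ for $k=1,\dots,n$, and the tube bound has been rescaled to radius $r/\gamma$. Unwinding $\mu_{\tilde{x}} = Y_{\tilde{x}}(\nu^{\mathbb{Z}_+})$ then gives (\ref{MeasureG}). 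I do not anticipate a serious obstacle: the only piece of bookkeeping is the index shift $n+1 \mapsto n$, which is paid for precisely by the cost $1/b$ of pinning down $j'_1$, together with the conversion of $\gamma$-factors in the $y$-coordinate that the recursion for $S$ produces.
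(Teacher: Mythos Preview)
Your proposal is correct and follows essentially the same approach as the paper: both arguments unpack $\mu_x$ via $Y_x$, use the one-step recursion $S(x,\textbf{j})=\phi((x+j_1)/b)+\gamma\,S((x+j_1)/b,\sigma\textbf{j})$ to translate the tube condition, and then invoke the Bernoulli product structure of $\nu^{\mathbb{Z}_+}$ to split off the first symbol at cost $1/b$. The only cosmetic difference is that the paper writes out the case $n=0$ explicitly and declares the other cases ``similar'', whereas you handle general $n$ in one pass.
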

\begin{proof}
	We only need to show the case $n=0$ and others are similar. 
	Let us consider the sets $$E=\bigg\{\textbf{j}'\in\Sigma:\big|S(x,\textbf{j}')-y\big|\le r,j'_1=j_1\bigg\}$$
	and
	$$F=\bigg\{\textbf{i}\in\Sigma:\bigg|S(\frac{x+j_1}b,\textbf{i})-\frac{y-\phi(\frac{j_1+x}b)}{\gamma}\bigg|\le \frac r{\gamma}\bigg\}.$$
	By the definition of the function $S$, we have $\sigma E=F$. Thus
	$$\nu^{\mathbb{Z}_+}(E)=\frac1b\nu^{\mathbb{Z}_+}(F)$$
	by the property of Bernoulli measure $\nu^{\mathbb{Z}_+}$, which implies (\ref{MeasureG}) holds.
	
\end{proof}

Let us consider the measurable partition of $\Sigma\times\mathbb{R}\times [0,1)$ defined by
$$\zeta:=\bigg\{\Sigma\times\{y\}\times\{x\}:y\in\mathbb{R},x\in [0,1)\bigg\}.$$
 Then  by the result of Rokhlin \cite{rokhlin1949fundamental}, 
there exists a canonical system of conditional measure.  for $\mu$ almost every $(\textbf{j},y,x)\in\Sigma\times\mathbb{R}\times [0,1)$ there exists conditional measure $\mu_{(\textbf{j},y,x)}\in\mathscr{P}\big(\Sigma\times\mathbb{R}\times [0,1)\big)$ such that

(C.1) $\text{supp}\,\mu_{(\textbf{j},y,x)}\subset\,\zeta(\textbf{j},y,x)$ 

(C.2) $\mu_{(\textbf{j},y,x)}=\mu_{(\textbf{j}',y,x)}$, for all $(\textbf{j}',y,x)\in\zeta(\textbf{j},y,x)$.

(C.3) for every $A\in\mathscr{B}(\Sigma\times\mathbb{R}\times [0,1))$,
\begin{equation}
\label{RokhlinEq1}
\mu(A)=\int_{\Sigma\times\mathbb{R}\times [0,1)}\mu_{(\textbf{j},y,x)}(A)d\mu(\textbf{j},y,x)
\end{equation}
\begin{equation}
\label{RokhlinEq2}
\mu(A)=\int_{ [0,1)}\int_{\Sigma\times\mathbb{R}}\mu_{(\textbf{j},y,x)}(A)d\mu_x(\textbf{j},y,x)dx
\end{equation}
\begin{equation}
\label{RokhlinEq3}
\mu(A)=\int_{ [0,1)}\int_{\mathbb{R}}\mu_{(\textbf{0}_{\infty},y,x)}(A)dm_x(y)dx
\end{equation}
where $\textbf{0}_{\infty}=00\ldots\in\Sigma.$

\subsection{The exact properties of $m_x$}
In this subsection we shall show that $m_x$ is exact dimensional for $\mathcal{m}\,a.e.\,x\in [0,1)$, and give a formula for $dim(m_x)$. The idea of the proof in this part is from \cite{ledrappier1992dimension}. 
Define on $\Sigma\times\mathbb{R}\times [0,1)$ the mapping $g_k$ and g by
$$\quad g_k(\textbf{j},y,x)=-log \frac{\mu_x\bigg(\boldsymbol{B}^T_{(\textbf{j},y,x) } (\gamma^k)\cap\mathscr{P}(\textbf{j},y,x)\bigg) }{\mu_x\big(\boldsymbol{B}^T_{(\textbf{j},y,x) } (\gamma^k)\big)}$$

and 

$$
g(\textbf{j},y,x)=-log\bigg(\,\mu_{(\textbf{j},y,x)}\big(\mathscr{P}(\textbf{j},y,x)\,\big)\bigg) $$
for $k\in\mathbb{Z}_+$.


\begin{lemma}\label{lem:bound}
	$\sup\limits_{x\in [0,1)}\int_{\Sigma\times\mathbb{R}\times\{x\}}\sup\limits_{k\ge1}g_k(\textbf{j},y,x)\,d\mu_x(\textbf{j},y,x)<\infty$
\end{lemma}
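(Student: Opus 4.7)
The plan is to rewrite $g_k$ as the log of a ratio of measure masses on $\mathbb{R}$ using (\ref{FundementalFormular}) and (\ref{MeasureG}), and then bound its supremum by a Besicovitch maximal function, whose weak-type constant is universal and in particular independent of the base point $x$.

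For the first step, I would fix $x\in[0,1)$ and set $x^{(i)}=(x+i)/b$, $T_i(y)=\gamma y+\phi(x^{(i)})$, and $\tilde m_i=(T_i)_*m_{x^{(i)}}$, so that (\ref{FundementalFormular}) with $n=1$ becomes $m_x=\tfrac{1}{b}\sum_{i=0}^{b-1}\tilde m_i$. Since $\mu_x$ is supported on the graph of $S(x,\cdot)$, a direct computation (equivalent to Lemma~\ref{lem:A1} at $n=0$) gives $\mu_x(\boldsymbol{B}^T_{(\textbf{j},y,x)}(r))=m_x(B(y,r))$ and $\mu_x(\boldsymbol{B}^T_{(\textbf{j},y,x)}(r)\cap\mathscr{P}(\textbf{j},y,x))=\tfrac{1}{b}\tilde m_{j_1}(B(y,r))$. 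Substituting into the definition of $g_k$ yields
\begin{equation*}
g_k(\textbf{j},y,x)=\log\!\Bigl(1+\sum_{i\neq j_1}\frac{\tilde m_i(B(y,\gamma^k))}{\tilde m_{j_1}(B(y,\gamma^k))}\Bigr),
\end{equation*}
and the elementary inequality $\log(1+\sum a_i)\le\sum\log(1+a_i)$ for $a_i\ge 0$ gives $\sup_k g_k(\textbf{j},y,x)\le\sum_{i\neq j_1}\log(1+M^{(j_1,i)}(y))$, where $M^{(j_1,i)}(y):=\sup_{r>0}\tilde m_i(B(y,r))/\tilde m_{j_1}(B(y,r))$ is a Besicovitch-type maximal function relative to $\tilde m_{j_1}$.

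Next I would disintegrate $\mu_x=\tfrac{1}{b}\sum_{j_1}\mu_x^{(j_1)}$; since the $y$-marginal of $\mu_x^{(j_1)}$ is exactly $\tilde m_{j_1}$, the problem reduces to bounding $\int\log(1+M^{(j_1,i)}(y))\,d\tilde m_{j_1}(y)$ uniformly in $x$ for each pair $j_1\neq i$. The decisive tool is the Besicovitch maximal inequality on $\mathbb{R}$: there is an absolute constant $C_*$ such that for any two finite Borel measures $\nu,\mu$ on $\mathbb{R}$, $\mu\{y:\sup_r\nu(B(y,r))/\mu(B(y,r))>t\}\le C_*\nu(\mathbb{R})/t$. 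Applied to the pair of probability measures $(\tilde m_i,\tilde m_{j_1})$ and combined with the layer-cake identity $\int\log(1+M)\,d\tilde m_{j_1}=\int_0^\infty(1+t)^{-1}\tilde m_{j_1}\{M>t\}\,dt$, this gives $\int\log(1+M^{(j_1,i)})\,d\tilde m_{j_1}\le\int_0^\infty(1+t)^{-1}\min(1,C_*/t)\,dt<\infty$, a constant depending only on $C_*$. Summing over the $b(b-1)$ pairs $(j_1,i)$ with $j_1\neq i$ and accounting for the factor $1/b$ from the disintegration then proves the bound on $\int\sup_k g_k\,d\mu_x$ with a constant independent of $x$.

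The main obstacle is the reduction step: one must recognise that the conditional-probability quantity $g_k$ on $\Sigma\times\mathbb{R}\times[0,1)$ simplifies to the log of a one-dimensional measure-relative maximal ratio, which only happens because $\mu_x$ is supported on a graph and the partition $\mathscr{P}$ is the pullback of a finite partition of $\Sigma$. The rest is standard, but it is essential that the constant in the Besicovitch inequality depends only on the dimension and not on the underlying measures — otherwise the uniformity in $x$ would be lost. It is also worth noting that the ratio $M^{(j_1,i)}$ is generally only of weak type $(1,1)$, not in $L^1$; applying the logarithm is precisely the operation whose integrability the weak-type bound alone guarantees, which is why the lemma is stated for $g_k$ rather than for the ratio itself.
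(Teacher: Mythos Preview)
Your proposal is correct and follows essentially the same route as the paper: both reduce $g_k$ to a ratio of one-dimensional ball masses in the $y$-variable (the paper does this implicitly by observing that $g_{\sup}$ depends only on $(j_1,y,x)$), obtain a weak-type tail bound via the Besicovitch covering theorem with a constant depending only on the dimension, and finish with the layer-cake formula. The only cosmetic difference is that you split the ratio into $b-1$ pairwise terms via $\log(1+\sum a_i)\le\sum\log(1+a_i)$ and invoke the Besicovitch maximal inequality as a black box, whereas the paper applies the covering theorem directly to the level sets $\{g_{\sup}>R,\ j_1=k\}$ and obtains the slightly sharper exponential tail $\mu_x(g_{\sup}>R)\le b\hat K e^{-R}$.
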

\begin{proof}
	Let $ g_{sup}(\textbf{j},y,x)=\sup\limits_{m\ge1}g_m(\textbf{j},y,x)$.  It is easy to see that $g_{sup}(\textbf{j},y,x)$ is a constant when $j_1$, $y$ and $x$ are fixed, which is reasonable to consider the set
	$$E_{R,k}^{x}=\bigg\{\,(\textbf{j},y,x)\in \Sigma\times\mathbb{R}\times\{x\} : g_{sup}(\textbf{j},y,x)> R ,\,j_1=k\bigg\}$$
	and
	\begin{equation}
	\widehat{E_{R,k}^x}=\bigg\{\,y\in\mathbb{R} : \exists\,\textbf{j}\,\text{s.t.}\, g_{sup}(\textbf{j},y,x)> R \,\text{and}\,j_1=k\bigg\}.
	\end{equation}
	for any $R>0$ and $k\in\varLambda.$
	
	So for any $y\in \widehat{E_{R,k}^x}$, exists  $t_y\in\mathbb{Z}_+$ and $\text{j}_y\in\Sigma$ satisfied that $g_{t_y}(\textbf{j}_y,y,x)>R$, which implies that
	\begin{equation}
	\label{A4}
	\mu_x\bigg([k]\times\boldsymbol{B}(y,\gamma^{t_y})\times\{x\}\bigg)\le e^{-R}\mu_x\bigg(\Sigma\times\boldsymbol{B}(y,\gamma^{t_y})\times\{x\}\bigg).
	\end{equation}
	So for any $y\in \widehat{E_{R,k}^x}$ we could find a ball $\boldsymbol{B}(y,\lambda^{t_y})\in\mathbb{R}$ and let $\mathscr{A}_{R,k}^x$ be the union of all the above balls when y take all possible elements in $\widehat{E_{R,k}^x}$. By Besicovitch covering theorem there exists a constant integer $\hat{K}=\hat{K}(\mathbb{R})$ and
	$\mathscr{A}_{j}\subset\mathscr{A}_{R,k}$ for $j=1,2,\ldots \hat{K}$ such that
	all elements in $\mathscr{A}_{j}\,$ are pair disjoint and $\bigcup^{\hat{K}}_{j=1}\mathscr{A}_{j}$ is a cover of $\widehat{E_{R,k}^x}$. Therefore 
	$$\mu_x(E_{R,k}^x)\le\sum\limits^{\hat{K}}_{j=1}\sum\limits_{\boldsymbol{B}\in\mathscr{A}_j}\mu_x\bigg( [k]\times\boldsymbol{B}\times\{x\}\bigg)\le\sum\limits^{\hat{K}}_{j=1}\sum\limits_{\boldsymbol{B}\in\mathscr{A}_j}e^{-R}\mu_x\bigg(\Sigma\times\boldsymbol{B}\times\{x\}\bigg)\le\hat{K}e^{-R}$$ with the disjoint  property of the elements of $\mathscr{A}_{j}$ and (\ref{A4}).
	Let 
	$$E_R^x=\bigg\{\,(\textbf{j},y,x)\in \Sigma\times\mathbb{R}\times{S^1} : g_{sup}(\textbf{j},y,x)> R \bigg\},$$ thus we have
	$$\mu_x(E_R^x)\le b \hat{K}e^{-R}.$$
	By the basic formula in probability theorem, thus
	
	$$\int_{\Sigma\times\mathbb{R}\times\{x\}}g_{sup}(\textbf{j},y,x)\,d\mu_x(\textbf{j},y,x)=\int^{\infty}_0\mu_x(E_{R}^x)dR\le b\hat{K}\int_0^{\infty}e^{-R}dR<\infty.$$
	
\end{proof}
By the standard method we have the following fact.
\begin{lemma}\label{lemH}
	$h:=\int_{\Sigma\times\mathbb{R}\times [0,1)} g(\,\textbf{j},y,x)d\mu(\textbf{j},y,x)<\infty$
\end{lemma}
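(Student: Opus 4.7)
The plan is to interpret $h$ as the conditional information of the partition $\mathscr{P}=\mathscr{P}^1$ relative to the fiber partition $\zeta$, and bound it by taking a limit of the integrals of $g_k$ together with Lemma \ref{lem:bound}. The key is to identify $g$ as the $\mu$-a.e. pointwise limit of $g_k$, and then apply Fatou's lemma fiberwise.

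\textbf{Step 1 (rewrite $g_k$ as a ratio of one-dimensional measures).} For each $x\in[0,1)$ and $j_1\in\varLambda$, define the finite Borel measure $m_x^{j_1}$ on $\mathbb{R}$ by
\[
m_x^{j_1}(A)=\nu^{\mathbb{Z}_+}\!\left(\{\textbf{j}'\in[j_1]:S(x,\textbf{j}')\in A\}\right),
\]
so that $m_x=\sum_{j_1\in\varLambda}m_x^{j_1}$ and $m_x^{j_1}\le m_x$. By the definition of $\mu_x$ and of $\boldsymbol{B}^T$,
\[
g_k(\textbf{j},y,x)=-\log\frac{m_x^{j_1}(\boldsymbol{B}(y,\gamma^k))}{m_x(\boldsymbol{B}(y,\gamma^k))},
\]
where $j_1$ is the first symbol of $\textbf{j}$.

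\textbf{Step 2 (pointwise limit via Besicovitch differentiation).} Since $m_x^{j_1}\ll m_x$, the Besicovitch differentiation theorem on $\mathbb{R}$ gives, for $m_x$-a.e.\ $y$,
\[
\lim_{k\to\infty}\frac{m_x^{j_1}(\boldsymbol{B}(y,\gamma^k))}{m_x(\boldsymbol{B}(y,\gamma^k))}=\frac{dm_x^{j_1}}{dm_x}(y).
\]
Specializing the disintegration identity (\ref{RokhlinEq3}) to sets of the form $[j_1]\times B\times[0,1)$ for $B\subset\mathbb{R}$ Borel yields
\[
m_x^{j_1}(B)=\int_{B}\mu_{(\mathbf{0}_\infty,y,x)}\big([j_1]\times\mathbb{R}\times[0,1)\big)\,dm_x(y),
\]
so the Radon--Nikodym derivative equals $\mu_{(\textbf{j},y,x)}(\mathscr{P}(\textbf{j},y,x))$ whenever $\textbf{j}$ begins with $j_1$ (using (C.2)). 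Consequently $g_k\to g$ $\mu_x$-a.e.\ for every $x$, hence $\mu$-a.e.\ by Fubini.

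\textbf{Step 3 (bound via Lemma \ref{lem:bound} and Fubini).} By Fatou's lemma applied on each fiber $\Sigma\times\mathbb{R}\times\{x\}$,
\[
\int g\,d\mu_x\le\liminf_{k\to\infty}\int g_k\,d\mu_x\le\int\sup_{k\ge 1}g_k\,d\mu_x,
\]
and the right-hand side is bounded by a constant independent of $x$ by Lemma \ref{lem:bound}. Since $\mu=\int_{[0,1)}\mu_x\,dx$, integrating in $x$ gives
\[
h=\int g\,d\mu=\int_{[0,1)}\!\!\int g\,d\mu_x\,dx<\infty.
\]

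The main obstacle is Step 2: justifying that the ratios of ball measures converge to the conditional probability $\mu_{(\textbf{j},y,x)}(\mathscr{P}(\textbf{j},y,x))$. This requires carefully matching the Besicovitch limit with the Rokhlin disintegration, but once this identification is made, the rest is a routine Fatou/Fubini argument built on top of the uniform bound already established in Lemma \ref{lem:bound}.
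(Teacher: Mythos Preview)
Your proof is correct but takes a genuinely different route from the paper. The paper's argument is much shorter: using (C.2) and the disintegration (\ref{RokhlinEq3}), it rewrites
\[
h=\int_{[0,1)}\int_{\mathbb{R}}\sum_{k\in\varLambda}F\bigl(\mu_{(\mathbf{0}_\infty,y,x)}([k]\times\{y\}\times\{x\})\bigr)\,dm_x(y)\,dx,\qquad F(s)=-s\log s,
\]
and observes that the inner sum is the Shannon entropy of a probability vector on $b$ symbols, hence bounded by $\log b$. This yields the explicit inequality $h\le\log b$ with no further work.

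Your approach instead identifies $g$ as the $\mu$-a.e.\ limit of $g_k$ via Besicovitch differentiation and the Radon--Nikodym identification with the Rokhlin conditionals, then invokes Fatou and the uniform bound of Lemma~\ref{lem:bound}. This is valid but less economical: it appeals to the full covering-lemma machinery of Lemma~\ref{lem:bound} to obtain a bound that is in fact just $\log b$. On the other hand, the convergence $g_k\to g$ a.e.\ that you establish in Step~2 is exactly what is needed later in the proof of Lemma~\ref{lemExactProperty} (where the paper invokes it as ``by measure differential theorem and (\ref{RokhlinEq2})''), so the work is not wasted. One small point of care: in Step~2 you need the \emph{fiberwise} identity $m_x^{j_1}(B)=\int_B\mu_{(\mathbf{0}_\infty,y,x)}([j_1]\times\{y\}\times\{x\})\,dm_x(y)$ for a.e.\ $x$, which follows from (\ref{RokhlinEq3}) together with $\mu=\int_{[0,1)}\mu_x\,dx$ and uniqueness of disintegration; this should be stated explicitly rather than read directly off (\ref{RokhlinEq3}).
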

\begin{proof}
	By (\ref{RokhlinEq3}) and (C.2), we have 
	$$h=\int_{ [0,1)}\int_{\mathbb{R}}\int_{\Sigma}g(\,\textbf{j},y,x)\,d\mu_{(\textbf{0}_{\infty},y,x)}(\textbf{j},y,x)dm_x(y)dx$$
	where $\textbf{0}_{\infty}=00\cdots0\cdots\in\Sigma.$
	Thus we have \begin{equation}\label{entropyh}
	h=\int_{ [0,1)}\int_{\mathbb{R}}\sum_{k\in\varLambda}F\bigg(\mu_{(\textbf{0}_{\infty},y,x)}\bigg([k]\times\{y\}\times\{x\}\bigg)\bigg)dm_x(y)dx
	\end{equation}
	where 
	$$F(s)=s\log\frac1s\quad\quad\forall s\ge0.$$
	Since $\sum_{k\in\varLambda}\mu_{(\textbf{0}_{\infty},y,x)}\big([k]\times\{y\}\times\{x\}\big)=1$, we have 
	$$\sum_{k\in\varLambda}F\bigg(\mu_{(\underline{\textbf{0}}_{\infty},y,x)}\bigg([k]\times\{y\}\times\{x\}\bigg)\bigg)\le\log b.$$
	
	Combining this with (\ref{entropyh}) we have
	$$h\le\log b.$$
	
\end{proof}
\begin{lemma}\label{lemExactProperty}
	For $\mathcal{m}$ a.e. $x\in [0,1)$, the following holds
	
	(1)$m_x$ is exact dimensional;
	
	(2)$dim(m_x)=\frac{log b}{log \gamma}(\frac{h}{log b}-1).$
\end{lemma}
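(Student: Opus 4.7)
My plan is to establish exact-dimensionality via a pointwise-dimension computation along geometric scales $\gamma^n$, exploiting the self-rescaling identity (\ref{MeasureG}) to reduce the mass of small $y$-balls to a Birkhoff-type sum along the ergodic system $(\mu, G)$, and then to transfer the $\mu$-a.e.\ conclusion to Lebesgue-a.e.\ $x$ via the Rokhlin disintegration (\ref{RokhlinEq3}).

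Iterating Lemma \ref{lem:A1} exactly $n$ times yields the closed form
$$\mu_x\!\left(\boldsymbol{B}^T_{(\textbf{j},y,x)}(\gamma^n) \cap \mathscr{P}^n(\textbf{j},y,x)\right) = b^{-n}\, m_{x^{(n)}}\!\left(\boldsymbol{B}(y^{(n)},1)\right),$$
where $(x^{(n)},y^{(n)})$ denote the $[0,1)$- and $\mathbb{R}$-coordinates of $G^n(\textbf{j},y,x)$. Applying the same identity to each consecutive ratio along the refining chain $\mathscr{P}^0 \supset \mathscr{P}^1 \supset \cdots \supset \mathscr{P}^n$ gives
$$\log\frac{\mu_x(\boldsymbol{B}^T(\gamma^n) \cap \mathscr{P}^k(\textbf{j},y,x))}{\mu_x(\boldsymbol{B}^T(\gamma^n) \cap \mathscr{P}^{k+1}(\textbf{j},y,x))} = g_{n-k}(G^k(\textbf{j},y,x)),$$
and combining these two pieces produces the master identity
\begin{equation}\label{eq:planmaster}
-\log m_x(\boldsymbol{B}(y,\gamma^n)) = n\log b - \log m_{x^{(n)}}(\boldsymbol{B}(y^{(n)},1)) - \sum_{k=0}^{n-1} g_{n-k}(G^k(\textbf{j},y,x)).
\end{equation}

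The asymptotics of \eqref{eq:planmaster} follow from a Maker/Breiman moving-average ergodic theorem applied to $(\mu,G)$. Its two inputs are: (a) the pointwise convergence $g_m \to g$ $\mu$-a.e.\ as $m\to\infty$, which comes from a martingale/Lebesgue-differentiation argument on the disintegration $\mu_{(\textbf{j},y,x)}$ (the shrinking balls $\boldsymbol{B}^T(\gamma^m)$ approximate the $\zeta$-fiber through $(\textbf{j},y,x)$); and (b) the integrable domination $\sup_m g_m \in L^1(\mu)$ supplied by Lemma \ref{lem:bound}. Maker's theorem then gives $\frac{1}{n}\sum_{k=0}^{n-1} g_{n-k}(G^k) \to \int g\,d\mu = h$ $\mu$-a.e., while the boundary term is $o(n)$ a.e.\ (or one may work instead with $r_0\gamma^n$ for $r_0$ large enough to cover the common support of the $m_{x'}$, which makes the boundary term identically zero). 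Dividing \eqref{eq:planmaster} by $\log(1/\gamma^n)$ and interpolating between consecutive geometric scales yields, for $\mu$-a.e.\ $(\textbf{j},y,x)$,
$$\lim_{r\to 0^+}\frac{\log m_x(\boldsymbol{B}(y,r))}{\log r} = \frac{\log b - h}{\log(1/\gamma)} = \frac{\log b}{\log\gamma}\!\left(\frac{h}{\log b} - 1\right).$$

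Since the above limit depends only on $(y,x)$, the disintegration (\ref{RokhlinEq3}) and Fubini imply that for Lebesgue-a.e.\ $x\in[0,1)$, $m_x$-a.e.\ $y$ attains this common value; by definition, $m_x$ is exact-dimensional with the asserted dimension. The main obstacle I foresee is input (a): the ``balls'' $\boldsymbol{B}^T(\gamma^m)$ shrink only in the $y$-direction, so identifying the limit of $g_m$ with the fiber conditional probability requires a careful application of the Rokhlin structure (C.1)--(C.3) together with Lemma \ref{lem:A1}. Once (a) is in place, the domination from Lemma \ref{lem:bound}, Maker's theorem, and the Fubini step close the argument.
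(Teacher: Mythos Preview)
Your proposal is correct and follows essentially the same route as the paper's proof. The paper derives the same telescoping identity (its (B5), your \eqref{eq:planmaster}) from Lemma~\ref{lem:A1}, and establishes the moving-average convergence $\tfrac{1}{n}\sum_{k=0}^{n-1} g_{n-k}\circ G^{k}\to h$ by exactly the Maker-type argument you invoke: Birkhoff applied to $g$, plus control of $\sup_{k\ge N}|g_k-g|$ via the domination from Lemma~\ref{lem:bound} and dominated convergence, with the pointwise convergence $g_k\to g$ coming (as you anticipated) from the measure differentiation theorem together with the Rokhlin disintegration (\ref{RokhlinEq2}). The only cosmetic differences are that the paper proves Maker's theorem inline rather than citing it, and eliminates your boundary term by the very normalization you suggest (assuming without loss of generality that the radius-$1$ ball has full $\mu_x$-mass); note also that the paper's displayed formula $-\log \mu_x[\boldsymbol{B}^T(\gamma^n)] = n\log b + \sum g_{n-k}\circ G^k$ carries a sign typo, while your version with the minus sign is the correct one consistent with the stated dimension.
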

\begin{proof}
	We first show the claim: 
	\begin{equation}
	\label{B4}
	\lim\limits_{n\to\infty}\frac1n\sum\limits_{k=0}^{n-1}g_{n-k}\circ G^{k}(\textbf{j},y,x)=h\quad\mu\, a.e.\,(\textbf{j},y,x)\in\Sigma\times\mathbb{R}\times [0,1).
	\end{equation}
	By Birkhoff ergodic theorem $\lim\limits_{n\to\infty}\frac1n\sum\limits_{k=0}^{n-1}g\circ G^{k}(\textbf{j},y,x)=h\quad\mu\,a.e.\,(t,z,\mathbf{i})\in\Sigma\times\mathbb{R}\times [0,1)$,
	since $g\in L^1(\mu)$ by Lemma \ref{lemH}.
	
	Thus we only need  to prove that 
	$$\limsup\limits_{n\to\infty}\frac1n\sum\limits_{k=0}^{n-1}|g_{n-k}-g|\circ G^{k}(\textbf{j},y,x)=0\quad\mu\, a.e.\,(\textbf{j},y,x)\in\Sigma\times\mathbb{R}\times [0,1).$$
	By lemma \ref{lem:bound} we can define
	$$\varDelta_N(\textbf{j},y,x)=\sup\limits_{k\ge N}|g_{k}-g|(\textbf{j},y,x)$$
	for $\forall N \in\mathbb{Z}_+$. 
	
	For fixed $N$, with lemma \ref{lem:bound} and Birkhoff ergodic theorem, we have
	$$\limsup\limits_{n\to\infty}\frac1n\sum\limits_{k=0}^{n-1}|g_{n-k}-g|\circ G^{k}(\textbf{j},y,x)\le\limsup\limits_{n\to\infty}\frac1n\sum\limits_{k=0}^{n-1}\varDelta_N\circ G^{k}(\textbf{j},y,x)$$
	for 
	$\mu\,a.e.\,(\textbf{j},y,x)\in\Sigma\times\mathbb{R}\times [0,1).$
	Also by Bikhoff ergodic theorem we have
	$$\lim\limits_{n\to\infty}\frac1n\sum\limits_{k=0}^{n-1}\varDelta_N\circ G^{k}(\textbf{j},y,x)=\mathbb{E}\varDelta_N\quad\mu\,a.e.\,(\textbf{j},y,x)\in\Sigma\times\mathbb{R}\times [0,1).$$
	Then for every $N \in\mathbb{Z}_+$,
	$$\limsup\limits_{n\to\infty}\frac1n\sum\limits_{k=0}^{n-1}|g_{n-k}-g|\circ G^{k}(\textbf{j},y,x)\le\mathbb{E}\varDelta_N\quad\mu\,a.e.\,(\textbf{j},y,x)\in\Sigma\times\mathbb{R}\times [0,1).$$
	
	Finally combining $\varDelta_N(\textbf{j},y,x)\le(\sup\limits_{k\ge1}g_{k}+g)(\textbf{j},y,x)$ and 
	$$\lim\limits_{k\to\infty}g_{k}(\textbf{j},y,x)=g(\textbf{j},y,x)\quad\mu\,a.e.\,(\textbf{j},y,x)\in\Sigma\times\mathbb{R}\times [0,1)$$
	by measure differential theorem and (\ref{RokhlinEq2}) , which implies
	$$\lim\limits_{n\to\infty}\mathbf{E}\,\varDelta_{n}=0$$ 
	by Lebesgue dominated convergent theorem.
	So we have shown
	$$\limsup\limits_{n\to\infty}\frac1n\sum\limits_{k=0}^{n-1}|g_{n-k}-g|\circ G^{k}(\textbf{j},y,x)=0\quad\mu\,a.e.\,(\textbf{j},y,x)\in\Sigma\times\mathbb{R}\times [0,1).$$

	Now we shall finish the proof. 
	Without loss of generality we assume $\mu_x[\boldsymbol{B}_{((\textbf{j},y,x)}(1)]=1$ for $\mu\,a.e.\,(\textbf{j},y,x)\in\Sigma\times\mathbb{R}\times [0,1)$.
	Thus we have
	
	\begin{equation}
	\label{B5}
	\mu_x\bigg(\boldsymbol{B}^T_{(\textbf{j},y,x)}(\gamma^n)\bigg)=\prod_{k=0}^{n-1}\frac{\mu_{\textbf{j}_k(x)}\big(\boldsymbol{B}^T_{G^k(\textbf{j},y,x)}(\gamma^{n-k})\big)}{\mu_{\textbf{j}_{k+1}(x)}\big(\boldsymbol{B}^T_{G^{k+1}(\textbf{j},y,x)}(\gamma^{n-k-1})\big)}.
	\end{equation}
	
	for  $\mu\,a.e.\,(\textbf{j},y,x)\in\Sigma\times\mathbb{R}\times [0,1)$.
	(\ref{B5}) implies
	$$\mu_x\bigg(\boldsymbol{B}^T_{(\textbf{j},y,x)}(\gamma^n)\bigg)=\prod_{k=0}^{n-1}\frac{\mu_{\textbf{j}_k(x)}\bigg(\boldsymbol{B}^T_{G^k(\textbf{j},y,x)}(\gamma^{n-k})\bigg)}{b\cdot\mu_{\textbf{j}_k(x)}\bigg(\boldsymbol{B}^T_{G^{k}(\textbf{j},y,x)}(\gamma^{n-k})\cap\mathscr{P}\big(G^{k}(\textbf{j},y,x)\big)\bigg)}.$$
	by lemma \ref{MeasureG}. Thus
	$$ - log \,\mu_x[\boldsymbol{B}^T_{(\textbf{j},y,x)}(\gamma^n)] =n log b+\sum\limits_{k=0}^{n-1}g_{n-k}\circ G^{k}(\textbf{j},y,x).$$
	Combining this with (\ref{B4})  we have 
	$$\lim\limits_{n\to\infty}\frac{\log\mu_x\bigg(\boldsymbol{B}^T_{(\textbf{j},y,x)}(\gamma^n)\bigg)}{log \gamma^n}=\frac{log b}{log \gamma}(\frac{h}{log b}-1)\quad\mu\,a.e.\,(\textbf{j},y,x)\in\Sigma\times\mathbb{R}\times [0,1).$$
	Since $m_x\bigg(\boldsymbol{B}(y,\gamma^n)\bigg)=\mu_x\bigg(\boldsymbol{B}^T_{(\textbf{j},y,x)}(\gamma^n)\bigg)$ for any $\textbf{j}\in\Sigma$, the lemma holds.
\end{proof}
\subsection{The proof of Theorem \ref{ledrappier-young}}
In this subsection, we will use Lemma \ref{lemExactProperty}  to prove Theorem \ref{ledrappier-young} by a standard method. For convenience let
$$\alpha=\frac{log b}{log \gamma}(\frac{h}{log b}-1)$$
and
$$M^{\phi}=M^{\phi}_{\gamma,\,b}=\sup_{\textbf{j}\in\Sigma\cup\varLambda^{\#},\,x\in [0,1]}\big|S'(x,\textbf{j})\big|.$$
\begin{proof}[Proof of Theorem \ref{ledrappier-young}]
	For any $\varepsilon,\,\delta>0$, by Lemma \ref{lemExactProperty}, Egoroff theorem and differentiation theorems for measures, there exists set $E\subset\Sigma\times\mathbb{R}\times [0,1)$  and $r_0\in(0,1)$ such that
	the following holds.
		For all $r\in(0,r_0)$ and $z=(x,y)\in E$, we have
	\begin{enumerate}
		\item[(1)] $\omega(E)>1-\delta$;
		\item[(2)]	$$\omega\bigg(E\cap\boldsymbol{B}(z,r)\bigg)\ge\frac12	\omega\bigg(\boldsymbol{B}(z,r)\bigg);$$		
		\item[(3)] 
		$$r^{\alpha+\varepsilon}\le m_x\bigg(\boldsymbol{B}(y,r)\bigg)\le r^{\alpha-\varepsilon}.$$
		
	\end{enumerate}
	 We first give the upper bound estimate.
	By (C.2) and Rokhlin decomposition of $\omega$ we have 
	\begin{equation}\label{EqAboveMeasure}
	\omega\bigg(\boldsymbol{B}(z,r)\bigg)\le2\omega\bigg(E\cap\boldsymbol{B}(z,r)\bigg)\le\int_{\boldsymbol{B}(x,r)} m_s\bigg(E_s\cap\boldsymbol{B}(y,r)\bigg)ds
	\end{equation}
	where $E_s=\big\{y'\in [0,1):(s,y')\in E\big\}.$
	
	If $m_s\bigg(E_s\cap\boldsymbol{B}(y,r)\bigg)>0$, there exists $y_s\in\mathbb{R}$ such that $(s,y_s)\in E.$ Thus
	$$m_s\bigg(E_s\cap\boldsymbol{B}(y,r)\bigg)\le m_s\bigg(\boldsymbol{B}(y_s,2r)\bigg)\le(2r)^{\alpha-\varepsilon}$$
	by (C.3) for $r<\frac{r_0}2$. Combining this with (\ref{EqAboveMeasure}) we have 
	$$\omega\bigg(\boldsymbol{B}(z,r)\bigg)\le 2r(2r)^{\alpha-\varepsilon}.$$
	Then we have 
	\begin{equation}\label{EqLowMeasureDimension}
	\liminf_{r\to0^+}\frac{\log\omega\bigg(\boldsymbol{B}(z,r)\bigg)}{\log r}\ge1+ \alpha-\varepsilon
	\end{equation}
	for every 
	$z\in E.$
	
	Finally we give the lower bound estimate. By (C.2) and Rokhlin decomposition of $\omega$ we have 
	\begin{equation}\label{EqLowMeasure}
	\omega\bigg(\boldsymbol{B}\bigg(z,\sqrt{2}(M^{\phi}+1)r\bigg)\bigg)\ge\int_{\boldsymbol{B}(x,r)} m_s\bigg(\boldsymbol{B}\big(y,(M^{\phi}+1)r\big)\bigg)ds.
	\end{equation}
	
	For any $x'\in\boldsymbol{B}(x,r)$ and $\textbf{j}\in\Sigma$ such that
	$\big|S(x,\textbf{j})-y\big|\le r,$
	we have 
	$$\big|S(x',\textbf{j})-y\big|\le M^{\phi}r+r.$$
	Thus
	$$m_{x'}\bigg(\boldsymbol{B}\bigg(y,(M^{\phi}+1)r\bigg)\bigg)\ge m_{x}\bigg(\boldsymbol{B}(y,r)\bigg)\ge r^{\alpha+\varepsilon}$$
	by (3). Combining this with (\ref{EqLowMeasure}) we have 
	$$\omega\bigg(\boldsymbol{B}\bigg(z,\sqrt{2}(M^{\phi}+1)r\bigg)\bigg)\ge 2r(r)^{\alpha+\varepsilon}.$$
	Thus 
	\begin{equation}\label{EqAboveMeasureDimension}
	\limsup_{r\to0^+}\frac{\log\omega\bigg(\boldsymbol{B}(z,r)\bigg)}{\log r}\le 1+\alpha+\varepsilon
	\end{equation}
	For all $z\in E.$ Since $\varepsilon,\delta$ can be arbitrarily close to 0, then (\ref{EqLowMeasureDimension}) and (\ref{EqAboveMeasureDimension}) imply
	$$\lim_{r\to0^+}\frac{\log\omega\bigg(\boldsymbol{B}(z,r)\bigg)}{\log r}=1+ \alpha$$
	For $\mu\,a.e.\,z\in [0,1)\times\mathbb{R}.$
\end{proof}

\bibliographystyle{plain}             

\end{document}